\newtheorem{theorem}{Theorem}[section]
\newtheorem{lemma}[theorem]{Lemma}
\newtheorem{proposition}[theorem]{Proposition}
\newtheorem{corollary}[theorem]{Corollary}
\newtheorem*{ExampleCS}{Example}
\theoremstyle{definition}
\newtheorem{example}[theorem]{Example}
\theoremstyle{remark}
\numberwithin{equation}{section}
\newcommand\C{\mathbb C}
\newcommand\D{\mathbb D}
\newcommand\rimu{R^\infty(K,\mu)}
\newcommand\rikmu{R^\infty(K,\mu)}
\newcommand\limu{L^\infty(\mu)}
\renewcommand\i{\infty}
\newcommand\area{{\frak m}}
\newcommand\CT{{\mathcal C}}
\begin{document}

\setcounter{page}{1}

\title[Multiplication Operators]{Multiplication Operators on Hilbert Spaces}

\author[L. Yang]{Liming Yang$^1$}

\address{$^1$Department of Mathematics, Virginia Polytechnic and State University, Blacksburg, VA 24061.}
\email{\textcolor[rgb]{0.00,0.00,0.84}{yliming@vt.edu}}

\subjclass[2010]{Primary 47B38; Secondary 30C85, 31A15, 46E15}

\keywords{Multiplication Operator, Subnormal Operator, and Spectral Mapping Theorem}


\begin{abstract} Let $S$ be a subnormal operator on a separable complex Hilbert space $\mathcal H$ and let $\mu$ be the scalar-valued spectral measure for the minimal normal extension $N$ of $S.$ Let $R^\i (\sigma(S),\mu)$ be the weak-star closure in $L^\i(\mu)$ of rational functions with poles off $\sigma(S),$ the spectrum of $S.$ The multiplier algebra $M(S)$ consists of functions $f\in L^\infty(\mu)$ such that $f(N)\mathcal H \subset \mathcal H.$ The
multiplication operator $M_{S,f}$ of $f\in M(S)$ is defined $M_{S,f} = f(N) |_{\mathcal H}.$ We show that  for $f\in R^\i (\sigma(S),\mu),$ (1) $M_{S,f}$ is invertible iff $f$ is invertible in $M(S)$ and (2) $M_{S,f}$ is Fredholm iff there exists $f_0\in R^\i (\sigma(S),\mu)$ and a polynomial $p$ such that $f=pf_0,$ $f_0$ is invertible in $M(S),$ and $p$ has only zeros in $\sigma (S) \setminus \sigma_e (S),$ where $\sigma_e (S)$ denotes the essential spectrum of $S.$
Consequently, we characterize $\sigma(M_{S,f})$ and $\sigma_e(M_{S,f})$ in terms of some cluster subsets of $f.$ Moreover, we show that if $S$ is an irreducible subnormal operator and $f \in R^\i (\sigma(S),\mu),$ then $M_{S,f}$ is invertible iff $f$ is invertible in $R^\i (\sigma(S),\mu).$ The results answer the second open question raised by J. Dudziak in 1984.  
\end{abstract} \maketitle

\section{\textbf{Introduction}}

\subsection{Backgrounds} 
For $\mathcal H$ a complex separable Hilbert space, $\mathcal L(\mathcal H)$ denotes the space of all bounded linear operators on $\mathcal H.$ The spectrum and essential spectrum of an operator $T\in \mathcal L(\mathcal H)$ are denoted $\sigma(T)$ and $\sigma_e(T),$ respectively.

An operator $S\in \mathcal L(\mathcal H)$ is called {\em  subnormal} if there exists a complex separable Hilbert space $\mathcal K$ containing $\mathcal H$ and a normal operator $N\in \mathcal L(\mathcal K)$ such that $N\mathcal H \subset \mathcal H$ and $S = N|_{\mathcal H}.$ Such an $N$ is called a {\em minimal normal extension} (mne) of $S$ if the smallest closed subspace of $\mathcal K$ containing $\mathcal H$ and reducing $N$ is $\mathcal K$ itself. Any two mnes of $S$ are unitarily equivalent in a manner that fixes $S.$ We say $S$ is pure if $S$ does not contain a non-trivial direct normal summand and $S$ is irreducible if $S$ does not have a non-trivial reducing invariant subspace.
The spectrum $\sigma (S)$ is the union of $\sigma (N)$ and some collection of bounded components of $\C\setminus \sigma (N).$ The book \cite{conway} is a good reference for basic information for subnormal operators.

For a subnormal operator $S\in \mathcal L(\mathcal H)$ with $N=mne(S),$ let $\mu$ be the scalar-valued spectral measure (svsm) for $N.$ For a compact subset $K\supset \sigma(S),$ let $\mbox{Rat}(K)$ be the set of rational functions with poles off $K$
and let $\rikmu$ be the weak-star closure of $\mbox{Rat}(K)$ in $\limu.$
For $f\in L^\i(\mu),$ the normal operator $f(N)$ is well defined. The multiplier algebra of $S$ is defined
\[
\ M(S) = \{f\in L^\i(\mu):~ f(N)\mathcal H \subset \mathcal H\}.
\]
Since the support of $\mu$ is a subset of $\sigma(N) \subset  \sigma (S),$ we see 
\[
\ R^\i(\sigma (S), \mu) \subset M(S).
\]
The multiplication operator $M_{S,f} := f(N) |_{\mathcal H}$ defines a functional calculus for $S$ and $f\in M(S).$ 

For a compact subset $K\subset \C$ and a finite positive measure $\mu$ supported on $K,$ let $R^2(K,\mu)$ denote the closure of $\text{Rat}(K)$ in $L^2(\mu)$ norm and let $S_\mu$ denote the multiplication by $z$ on $R^2(K,\mu).$  

\begin{ExampleCS}
If $S$ is a rationally cyclic subnormal operator, then $S$ is unitarily equivalent to $S_\mu$ on $R^2(K,\mu),$ where $K = \sigma(S_\mu)$ (see \cite[III.5.2]{c81}). In this case, 
\[
\ M(S_\mu) = R^2(\sigma(S_\mu),\mu) \cap L^\i (\mu).
\]
\end{ExampleCS}

We concern the following questions for a subnormal operator $S$ and $f\in R^\i(\sigma (S), \mu)$ (or $f\in M(S)$):
\newline
(Q1) When is $M_{S,f}$ invertible?
\newline
(Q2) When is $M_{S,f}$ Fredholm?
\newline
(Q3) Is there a spectral mapping theorem for the function calculus for subnormal operators (see \cite[Chapter VII Section 3]{conway})? More generally, can $\sigma(M_{S,f})$ and $\sigma_e(M_{S,f})$ be characterized in terms of function theoretic descriptions of $f?$

The objective of the paper is to answer all above questions for $f\in R^\i(\sigma (S), \mu).$

\subsection{Known Results}
For a Borel subset $B\subset\C,$ let $M_0(B)$ denote the set of finite complex-valued Borel measures that are compactly supported in $B$ and let $M_0^+(B)$ be the set of positive measures in $M_0(B).$ The support of $\nu\in M_0(\C),$ $\text{spt}(\nu),$ is the smallest closed set that has full $|\nu|$ measure. For a Borel set $A\subset \C,$ $\nu_A$ denotes $\nu$ restricted to $A$ and $\overline {A}$ denotes the closure of the set $A.$  We use $\nu-a.a.$ for a property that holds everywhere except possibly on a set of $|\nu|$ zero. Let $\area$ denote the area (Lebesgue) measure on $\C.$

For a bounded open subset $\Omega \subset \C,$ let $L_a^2(\Omega)$ be the Bergman space of analytic functions $f$ on $\Omega$ such that $\int_\Omega |f|^2 d\area < \infty$ and let $H^\infty (\Omega)$ denote the algebra of bounded and analytic functions on $\Omega.$ The Bergman shift $B_z$ is defined $B_zf=zf$ for $f\in L_a^2(\Omega).$ Then $B_z$ is subnormal, $M(B_z) = H^\infty (\Omega),$ $\sigma (B_z) = \overline \Omega,$ and $\sigma _e (B_z) = \partial_e \Omega,$ where $\partial_e \Omega$ denotes the essential boundary of $\Omega$ (see \cite[on page 29]{a82}). For $f\in H^\infty (\Omega),$ let $cl(f, \Omega)$ be the closure of the set $f(\Omega)$ and let $cl_e (f, \Omega)$ be the set of points $a\in \C$ such that there exists $\{\lambda_n\} \subset \Omega$ satisfying $\lim_{n\rightarrow \infty}\lambda_n \in \partial_e \Omega$ and $\lim_{n\rightarrow \infty}f(\lambda_n) = a.$
S. Axler \cite[Proposition 2 and Theorem 23]{a82} proved that
\[
\ \sigma (M_{B_z,f}) = cl (f, \Omega) \text{ and } \sigma_e (M_{B_z,f}) = cl _e (f, \Omega).
\]
The result answered the questions (Q1), (Q2), and (Q3) for the Bergman shift $B_z.$
Some special cases of the above result include Coburn \cite{c73}, Janas \cite{j81}, McDonald \cite{m77}, and Axler, Conway, and McDonald \cite{acm82}.

 The Cauchy transform of $\nu \in M_0(\C)$ is defined by
\begin{eqnarray} \label{CTDefInit}
\ \mathcal C\nu (z) = \int \dfrac{1}{w - z} d\nu (w)
\end{eqnarray}
for all $z\in\mathbb {C}$ for which
\begin{eqnarray} \label{CTDefInit2}
\ \tilde \nu (z) := \int \frac{d|\nu|(w)}{|w-z|} < \infty .	
\end{eqnarray}
A standard application of the Fubini's
Theorem shows that $\mathcal C\nu \in L^s_{loc}(\mathbb {C} )$ for $ 0 < s < 2.$ In particular, it is
defined for $\area-a.a..$

Let $K$ be a compact subset and $\mu \in M_0^+(K).$ There exists a Borel partition $\{\Delta_0,\Delta_1\}$ of $\text{spt}(\mu)$ such that 
\begin{eqnarray}\label{RIDecomp}
\ \rikmu = L^\i (\mu_{\Delta_0}) \oplus R^\i (K,\mu_{\Delta_1}) 
\end{eqnarray}
where $R^\i (K,\mu_{\Delta_1})$ contains no non-trivial direct $L^\i$ summands (see \cite[Proposition 1.16 on page 281]{conway} ). Call $\rikmu$ pure if $\Delta_0 = \emptyset$ in \eqref{RIDecomp}.
For $W\subset L^\i(\mu),$ let $W^\perp$ be the set of $g\in L^1(\mu)$ such that $\int fgd\mu = 0$  for all $f\in W.$

The {\em envelope} $E(K,\mu)$ with respect to $K$ and $\mu\in M_0^+(K)$ is the set of points $\lambda \in  K$ such that there exists $g\in \rikmu^\perp$ satisfying
\begin{eqnarray} \label{EKMuDef}
\  \widetilde{g\mu}(\lambda) < \i \text{ and } \CT(g\mu)(\lambda) \ne 0.	
\end{eqnarray} 
The set $E(K,\mu)$ is an $\area$ measurable set (see Proposition \ref{EProp} (b)).
For $\lambda\in E(K,\mu),$ $g\in \rikmu^\perp$ satisfying \eqref{EKMuDef}, and $f\in R^\i (K,,\mu),$ set $\rho_{K,\mu}(f)(\lambda) = \frac{\CT(fg\mu)(\lambda)}{\CT(g\mu)(\lambda)}.$ Clearly $\rho_{K,\mu}(r)(\lambda) = r(\lambda)$ for $r\in \text{Rat}(K)$ since $\frac{r(z) - r(\lambda)}{z - \lambda} \in \text{Rat}(K).$ Hence, $\rho_{K,\mu}(f)(\lambda)$ is independent of the particular $g$ chosen. 
We thus have a map $\rho_{K,\mu}$ called Chaumat's map for $K$ and $\mu,$ which associates
to each function in $\rikmu$ a point function on $E(K,\mu).$ Chaumat's Theorem \cite{cha74} (also see \cite[page 288]{conway}) states: If $\rikmu$ is pure, then $\rho_{K,\mu}$ is an isometric isomorphism and a weak-star homeomorphism from $\rikmu$ onto $R^\i (\overline {E(K,\mu)}, \area_{E(K,\mu)}).$

Let $S\in \mathcal L(\mathcal H)$ be a subnormal operator and let $\mu$ be the scalar-valued spectral measure for $N=mne(S)$ on $\mathcal K.$ Assume that $R^\i(\sigma (S), \mu)$ is pure (i.e. $S$ is $R^\i$ pure). Clearly, if $S$ is pure, then $S$ is $R^\i$ pure. 
Denote 
\[
\ E_S^\i = E(\sigma (S),\mu) \text{ and } \rho_S = \rho_{\sigma (S),\mu}.
\]
For $f\in R^\i(\sigma (S), \mu)$ and an $\area$ measurable subset $E \subset E_S^\i,$ let $cl (\rho_S(f), E)$ be the closure of the set  $\{\rho_S(f)(\lambda):~ \lambda \in E\}$  and let $cl_e (\rho_S(f), E)$ be the set of points $a\in \C$ such that there exists $\{\lambda_n\} \subset E$ satisfying $\lim_{n\rightarrow \infty}\lambda_n \in \sigma _e(S)$ and $\lim_{n\rightarrow \infty}\rho_S(f)(\lambda_n) = a.$
J. Dudziak \cite{dud84} shown if a compact subset $K \supset \sigma (S)$ satisfies a sufficient strong condition to actually guarantee that functions in $R^\i(K, \mu)$ are bounded and analytic on some open set, then for $f\in R^\i(K, \mu)$ (notice that we may have $R^\i(K, \mu) \subsetneqq R^\i(\sigma (S), \mu)$),
 \begin{eqnarray} \label{DudziakEq1}
\ \sigma (M_{S,f}) = cl (\rho_S(f), E_S^\i)
\end{eqnarray}
and
\begin{eqnarray} \label{DudziakEq2}
\ \sigma_e (M_{B_z,f}) = cl_e (\rho_S(f), E_S^\i).
\end{eqnarray}
Let $P^\infty(\mu)$ be the weak-star closure of polynomials in $L^\infty(\mu).$ Then $P^\infty(\mu) = R^\i(K, \mu),$ when $K$ is a disk containing $\sigma (S).$ Therefore, \eqref{DudziakEq1} and \eqref{DudziakEq2} imply the spectral mapping theorem for $f\in P^\infty(\mu)$ obtained by Conway and Olin in 1977 (see \cite[Corollary 3.8 on page 326]{conway} or \cite{co77}). 

Recently, the author \cite[Theorem 1.1 and Corollary 1.2]{y23} proved the conjecture $\diamond$ posed by J. Dudziak in \cite[first open question on page 386]{dud84} below.

\begin{theorem}\label{InvTheoremY23} (Yang 2023)
Let $S\in \mathcal L(\mathcal H)$ be a subnormal operator and let $\mu$ be the scalar-valued spectral measure for $N=mne(S).$ Assume $S$ is $R^\i$ pure. If for $f\in R^\i(\sigma (S), \mu),$ there exists $\epsilon_f > 0$ such that 
\[
\ |\rho_S(f)(z)| \ge \epsilon_f,~ \area_{E_S^\infty}-a.a.,
\] 
then $f$ is invertible in $R^\i(\sigma (S), \mu)$ and $M_{S,f}$ is invertible.	Consequently, 
\[
\ \sigma (M_{S,f}) \subset cl(\rho_S(f), E_S^\i).
\]
\end{theorem}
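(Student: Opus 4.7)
\emph{Proof proposal.} The natural plan is to transfer the problem via Chaumat's map, invert on the envelope side, and pull back. Set $g := \rho_S(f)$. By Chaumat's Theorem, $\rho_S$ is an isometric isomorphism and weak-star homeomorphism from $R^\infty(\sigma(S),\mu)$ onto $R^\infty(\overline{E_S^\infty}, \area_{E_S^\infty})$, so $g \in R^\infty(\overline{E_S^\infty}, \area_{E_S^\infty})$. The hypothesis says $|g(z)| \ge \epsilon_f$ for $\area_{E_S^\infty}$-a.a.\ $z$, so at least $1/g \in L^\infty(\area_{E_S^\infty})$ with $\|1/g\|_\infty \le 1/\epsilon_f$.

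The heart of the argument is to upgrade this to
\[
\frac{1}{g} \in R^\infty(\overline{E_S^\infty}, \area_{E_S^\infty}).
\]
I would establish this using Tolsa's theorems on Cauchy transforms and analytic capacity (Theorem IV) together with the Aleman--Richter--Sundberg structural description (Theorem III). Together these identify elements of $R^\infty(\overline{E_S^\infty}, \area_{E_S^\infty})$ with bounded functions that are analytic on a canonical open subset of full $\area_{E_S^\infty}$ measure, namely the set of analytic bounded point evaluations. On that analytic piece $g$ is analytic and bounded below by $\epsilon_f$, so $1/g$ is analytic and bounded by $1/\epsilon_f$ there, and the same machinery then places this reciprocal back inside $R^\infty(\overline{E_S^\infty}, \area_{E_S^\infty})$.

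Once this is in hand, Chaumat's Theorem produces $f_0 \in R^\infty(\sigma(S), \mu)$ with $\rho_S(f_0) = 1/g$, and since $\rho_S$ is a multiplicative isomorphism, $\rho_S(ff_0) = g\cdot(1/g) = 1 = \rho_S(1)$ forces $ff_0 = 1$ $\mu$-a.e. Hence $f$ is invertible in $R^\infty(\sigma(S),\mu)$. To lift to operators, the Borel functional calculus on $\mathcal K$ gives $f(N)f_0(N) = f_0(N)f(N) = I$; since $f,f_0\in M(S)$ preserve $\mathcal H$, restricting yields $M_{S,f}M_{S,f_0} = M_{S,f_0}M_{S,f} = I$, so $M_{S,f}$ is invertible. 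For the consequent inclusion $\sigma(M_{S,f})\subset cl(\rho_S(f), E_S^\infty)$, I would apply the statement just proved to $f-\lambda$ whenever $\lambda\notin cl(\rho_S(f), E_S^\infty)$: the distance condition gives $|\rho_S(f-\lambda)| = |\rho_S(f) - \lambda| \ge \operatorname{dist}(\lambda, cl(\rho_S(f), E_S^\infty)) > 0$ $\area_{E_S^\infty}$-a.e., so $M_{S,f} - \lambda I = M_{S, f-\lambda}$ is invertible and $\lambda \notin \sigma(M_{S,f})$.

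The main obstacle is unquestionably the middle step. A priori $R^\infty(\overline{E_S^\infty}, \area_{E_S^\infty})$ is only a weak-star closed subalgebra of $L^\infty(\area_{E_S^\infty})$, and such subalgebras are generally not inverse-closed even when an element is bounded below in $L^\infty$; establishing that this particular algebra \emph{is} inverse-closed under the area-a.e.\ bounded-below hypothesis is precisely the nontrivial input from modern rational-approximation theory, and is what makes the Dudziak conjecture a genuine theorem rather than a formal consequence of Chaumat's correspondence.
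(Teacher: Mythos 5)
Your overall architecture is the right one and matches the intended route: push $f$ through Chaumat's map, invert $\rho_S(f)$ inside $R^\i(\overline{E_S^\i},\area_{E_S^\i})$, pull back to get $f_0$ with $ff_0=1$, and deduce operator invertibility and the spectral inclusion by applying the result to $f-\lambda$. Those outer steps are correct as written. Note, however, that the paper does not prove this theorem at all --- it is quoted from \cite{y23} --- so the only substantive point of comparison is the one step you leave open, and there the proposal has a genuine gap.

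The gap is the middle step, and the sketch you offer for it rests on a premise that is false in exactly the situations the theorem is about. You propose to identify $R^\i(\overline{E_S^\i},\area_{E_S^\i})$ with bounded functions analytic on a canonical \emph{open} set of full $\area_{E_S^\i}$ measure and then invert pointwise there. But the envelope need not meet any open set on which the functions are analytic: the construction in Lemmas \ref{K1Exist} and \ref{KExist} produces a compact $K$ whose Swiss-cheese part $K_1$ has positive area and empty interior while $\area(K\setminus E(K,\area_K))=0$, so almost every point of $K_1$ lies in the envelope yet carries no disk of analyticity. If the open-set picture held, inverse-closedness would be essentially formal (as in Axler's Bergman-space case $M(B_z)=H^\i(\Omega)$), and Dudziak's conjecture would not have remained open for decades. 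What the actual proof in \cite{y23} does --- and what is echoed in the proof of Theorem \ref{InvInMA} here --- is to replace the open set by the removable set $\mathcal R_S$ of \eqref{RSDef}, defined through principal values of Cauchy transforms of annihilating measures, prove $R^\i(\overline E,E)=H^\i(\mathcal R_S)$, and then construct the inverse via a Vitushkin-type approximation scheme driven by Tolsa's semiadditivity of analytic capacity \eqref{SAAC} and the $\gamma$-continuity of the functions involved. Your appeal to an Aleman--Richter--Sundberg structure theorem does not supply this: that theory concerns $P^t(\mu)$ spaces and does not yield an open analytic skeleton for $R^\i(K,\mu)$. So the proposal correctly isolates where the difficulty lies, but the mechanism proposed for overcoming it would not work, and the central claim that $1/\rho_S(f)\in R^\i(\overline{E_S^\i},\area_{E_S^\i})$ remains unproved.
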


\subsection{Main Results} The second open question posted in \cite[page 386]{dud84} is that of whether the inclusion  
\[
\ \sigma (M_{S,f}) \supset cl(\rho_S(f), E_S^\infty)
\]
holds for every $R^\i$ pure subnormal operator $S$ and every $f\in R^\i(\sigma (S), \mu).$ Our first result, Corollary \ref{ExampleCor} in Section 2, provides an example to show that the above inclusion fails in general. That is, there exists a $R^\i$ pure subnormal operator $S$ and there exists a function $f \in R^\i(\sigma(S),\mu)$ such that
\begin{eqnarray} \label{DudziakEq1Fail}
\ \sigma (M_{S,f}) \subsetneqq cl(\rho_S(f), E_S^\infty).
\end{eqnarray}
Naturally we may ask the following question, a second version of (Q3):
\newline
(Q3') Is there a subset $E \subset E_S^\infty$ such that for $f\in R^\i(\sigma (S), \mu),$
\[
\ \sigma (M_{S,f}) = cl(\rho_S(f), E) \text{ and }\sigma_e (M_{S,f}) = cl_e(\rho_S(f), E)?
\]

By the spectral theorem of normal operators, we assume 
\begin{eqnarray}\label{NEHSpace}
\ \mathcal K = \bigoplus _{i = 1}^m L^2 (\mu _i)
\end{eqnarray}
where $\mu_i\in M_0^+(\C),$ $\mu_1(= \mu) \gg \mu_2 \gg ... \gg\mu_m$ ($m$ may be $\infty$), and $N = M_z$ is the multiplication by $z$ on $\mathcal K.$ For $H = (h_1,...,h_m), ~G = (g_1,...,g_m)\in \mathcal K,$ define
\begin{eqnarray}\label{HGProduct}
\ \langle H(z),G(z)\rangle = \sum_{i=1}^m h_i(z) \overline{g_i(z)} \dfrac{d\mu_i}{d\mu}.
\end{eqnarray}
Then the inner product $(H,G) = \int \langle H(z),G(z)\rangle d\mu(z).$ Set $ \mu_{H,G} = \langle H(z),G(z)\rangle \mu.$

 The {\it envelope} $E_S$ of $S$ is the set of points $\lambda \in  \sigma(S)$ such that there exist $H \in \mathcal H$ and $G\in \mathcal K \ominus \mathcal H$ satisfying $ \widetilde{\mu_{H,G}}(\lambda) < \i$ and $\CT\mu_{H,G}(\lambda) \ne 0.$ Clearly, by \eqref{EKMuDef}, $E_S \subset E_S^\i$ since $\langle H(z),G(z)\rangle \in R^\i(\sigma(S), \mu)^\perp.$  
 From Pr1position \ref{ESProp}, we see that $E_S$ is an $\area$ measurable subset of $E_S^\i.$ Example \ref{ESExample} shows that there exists a pure subnormal operator $S$ satisfying
 \[
 \ E_S \subsetneqq E_S^\i \text{ and } \area(E_S^\i \setminus E_S) > 0.
 \]
 
 Theorem \ref{MTheorem} generalizes Theorem \ref{InvTheoremY23} and proves that
for a pure subnormal operator $S$ and $f\in R^\i(\sigma (S), \mu),$ $f$ is invertible in $M(S)$ if and only if there exists $\epsilon_f > 0$ such that 
\begin{eqnarray} \label{MTEq}
\ |\rho_S(f)(z)| \ge \epsilon_f,~ \area_{E_S}-a.a..
\end{eqnarray}
Consequently, if $f\in R^\i(\sigma (S), \mu)$ satisfies \eqref{MTEq}, then $M_{S,f}$ is invertible.

As an application of Theorem \ref{MTheorem}, Theorem \ref{theoremA} (1) answers (Q1) as the following: If $f\in R^\i(\sigma (S), \mu),$ then $M_{S,f}$ is invertible if and only if $f$ is invertible in $M(S).$
Theorem \ref{theoremA} (2) answers (Q2) as the following: If $f\in R^\i(\sigma (S), \mu),$ then 
 $M_{S,f}$ is Fredholm if and only if there exists $f_0\in R^\i(\sigma (S), \mu)$ and a polynomial $p$ such that $f=pf_0,$ $f_0$ is invertible in $M(S),$ and $p$ has only zeros in $\sigma (S) \setminus \sigma_e (S).$ Combining Corollary \ref{corollaryA} and Theorem \ref{InvInMA}, we prove Corollary \ref{corollaryB} which affirmatively answers (Q3) and (Q3').
 
Corollary \ref{ExampleCor} shows that \eqref{DudziakEq1Fail} holds for some $S = \oplus_{n=1}^\i S_n,$ where $S_n$ is irreducible. Corollary  \ref{corollaryC} shows that this phenomenon disappears when $S$ is irreducible, that is, both \eqref{DudziakEq1} and \eqref{DudziakEq2} hold if $S$ is an irreducible subnormal operator. The result shows that the second open question in \cite[page 386]{dud84} has an affirmative answer for irreducible subnormal operators.

\section{\textbf{An Example of a Pure Subnormal Operator Satisfying \eqref{DudziakEq1Fail}}}

The elementary properties of the envelope $E(K,\mu)$ of $\rikmu$ are listed in the following proposition.

\begin{proposition}\label{EProp} Let $\mu \in M_0^+(K)$ for some compact subset $K\subset \C.$ If $\rimu$ is pure, then the following properties hold.
\newline
(a) $E(K,\mu)$ is the set of weak-star continuous homomorphisms on $\rikmu$ (see \cite[Proposition VI.2.5]{conway}).
\newline
(b) $E(K,\mu)$ is a nonempty $\area$ measurable set with area density one at each of its points (see \cite[Proposition VI.2.8]{conway}).
\newline
(c) $\text{int}(E(K,\mu)) = \text{int}(\overline {E(K,\mu)})$ (see \cite[Proposition VI.3.9]{conway}).
\end{proposition}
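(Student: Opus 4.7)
My plan is to establish the three properties in sequence, drawing on the purity of $\rikmu$.

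For (a), I aim to construct a bijection between $E(K,\mu)$ and the weak-star continuous characters on $\rikmu$. Given $\lambda\in E(K,\mu)$ with witness $g\in\rikmu^\perp$ satisfying \eqref{EKMuDef}, I would verify that $\rho_{K,\mu}(\cdot)(\lambda)$ is weak-star continuous — because $\widetilde{g\mu}(\lambda)<\infty$ puts $(w-\lambda)^{-1}g(w)$ in $L^1(\mu)$, so $f\mapsto \CT(fg\mu)(\lambda)$ is continuous on $L^\infty(\mu)$ — and multiplicative, which is immediate on $\mathrm{Rat}(K)$ and extends to $\rikmu$ by weak-star density. Conversely, any weak-star continuous character $\phi$ is represented via Hahn--Banach as $\phi(f)=\int fg\,d\mu$ for some $g\in L^1(\mu)$; with $\lambda=\phi(z)$ and $h(w):=(w-\lambda)g(w)$, multiplicativity of $\phi$ forces $h\in\rikmu^\perp$, while $\widetilde{h\mu}(\lambda)\le \|g\|_{L^1(\mu)}<\infty$ and $\CT(h\mu)(\lambda)=\phi(1)=1$ place $\lambda$ in $E(K,\mu)$ and identify $\phi$ with $\rho_{K,\mu}(\cdot)(\lambda)$.

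For (b), nonemptiness follows from (a) together with purity: a pure $\rikmu$ cannot coincide with $L^\infty(\mu)$ on any piece of the support, so its weak-star spectrum admits nontrivial characters. Area measurability is obtained by writing
\[
E(K,\mu)=\bigcup_n\bigl\{\lambda:\widetilde{g_n\mu}(\lambda)<\infty,~\CT(g_n\mu)(\lambda)\ne 0\bigr\}
\]
along a countable weak-star dense sequence $\{g_n\}\subset\rikmu^\perp$, each summand being $\area$-measurable by standard properties of Cauchy transforms. The area-density-one clause at each $\lambda\in E(K,\mu)$ is the delicate point, and is where I would deploy Calder\'on--Zygmund-type differentiation theorems for $L^1$ Cauchy transforms to upgrade the pointwise finite-nonzero data at $\lambda$ to density-one local membership in $E(K,\mu)$.

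For (c), the inclusion $\mathrm{int}(E(K,\mu))\subset\mathrm{int}(\overline{E(K,\mu)})$ is immediate. For the reverse, any open disk $D\subset\overline{E(K,\mu)}$ contains, by (b), a subset of $E(K,\mu)$ of full area in $D$; the Cauchy transform of an associated $g\in\rikmu^\perp$ is then finite and analytic off a set of zero $\area$-measure inside $D$, which by Painlev\'e-type removability forces analytic extension throughout $D$ and places every point of $D$ in $E(K,\mu)$. The main obstacle in this plan is the area-density-one clause in (b): it is inaccessible from the formal, functional-analytic manipulations that handle (a) and the skeleton of (c), and fundamentally relies on Cauchy-transform estimates of Calder\'on--David--Tolsa flavor; once it is in hand, the remaining pieces follow from routine complex-analytic and duality arguments.
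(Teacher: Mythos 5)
First, note that the paper does not actually prove this proposition: each of (a), (b), (c) is quoted from Conway's book (Propositions VI.2.5, VI.2.8 and VI.3.9 of \cite{conway}), so there is no internal proof to compare against and your proposal is an attempt to reprove standard results from scratch. Part (a) of your sketch is essentially the correct and standard argument, up to the small omission that you must check $\lambda=\phi(z)\in K$ (this follows because $(z-\lambda_0)^{-1}\in\mathrm{Rat}(K)$ for $\lambda_0\notin K$ forces $\phi(z)\ne\lambda_0$).

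The genuine gaps are in (b) and, more seriously, (c). In (b) you have the difficulty inverted: the density-one clause is the elementary part --- it is exactly the Chebyshev/Fubini estimate the paper itself displays in \eqref{ESPropEq} for the sets $E_{H,G}$, which needs only $\widetilde{g\mu}(\lambda)<\infty$ and no Calder\'on--Zygmund or Tolsa input --- whereas nonemptiness does not follow from the abstract remark that a proper weak-star closed subalgebra ``admits nontrivial characters'' (false in general); one needs the concrete fact that a nonzero $g\in\rikmu^\perp$ has $\CT(g\mu)\ne 0$ on a set of positive area because the Cauchy transform determines the measure, together with $\widetilde{g\mu}<\infty$ $\area$-a.e. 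The reduction of the union to a countable dense family $\{g_n\}$ also needs an argument, since a witness $g$ for a given $\lambda$ need not pass to nearby $g_n$ with $\widetilde{g_n\mu}(\lambda)<\infty$. In (c) the argument fails at two points. First, ``$D\subset\overline{E(K,\mu)}$ together with density one at each point of $E(K,\mu)$'' does not imply that $E(K,\mu)$ has full area in $D$: an open set of the form $D\setminus C$, with $C$ closed, nowhere dense and of positive area, is dense in $D$ and has density one at each of its points, yet omits a positive-area subset of $D$. Second, $\CT(g\mu)$ is not ``analytic off a set of zero area in $D$''; it is analytic only off $\mathrm{spt}(g\mu)$, which may be all of $D$, it need not be bounded, and Painlev\'e-type removability requires zero analytic capacity, not zero area. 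Establishing $\mathrm{int}(E(K,\mu))=\mathrm{int}(\overline{E(K,\mu)})$ is in fact the deep part of the proposition, resting on the analytic-bounded-point-evaluation machinery behind \cite[Proposition VI.3.9]{conway}, and your sketch does not reach it.
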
 

J. Dudziak \cite{dud84} conjectured that an appropriate place to look for an example satisfying \eqref{DudziakEq1Fail} would be among rationally cyclic $R^\i$ pure subnormal operators. 
 Our theorem below provides such an example.

\begin{theorem} \label{ExampleThm}
There exists $\mu \in M_0^+(\C)$ and a function $f \in R^\i(\sigma(S_\mu),\mu)$ such that $\sigma(S_\mu) = \text{spt}\mu,$ $R^\i(\sigma(S_\mu),\mu)$ is pure,
$|f| \ge \epsilon,~\mu-a.a.$ for some $\epsilon > 0,$ $f$ is invertible in $M(S_\mu),$ and $f$ is not invertible in $R^\i (\sigma(S_\mu),\mu).$
\end{theorem}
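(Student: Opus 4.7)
The central challenge is constructing a rationally cyclic $S_\mu$ for which $M(S_\mu)=R^2(\sigma(S_\mu),\mu)\cap L^\infty(\mu)$ (by the Example in the introduction) strictly contains $R^\infty(\sigma(S_\mu),\mu)$ and, crucially, is not inverse-closed in it: one needs $f\in R^\infty$ with bounded reciprocal $1/f\in L^\infty(\mu)$ and $1/f\in R^2(\sigma(S_\mu),\mu)\setminus R^\infty(\sigma(S_\mu),\mu)$. The subtlety is that while the uniform algebra $R(K)$ is inverse-closed in $C(K)$ because inversion is norm-continuous, the weak-star analogue can fail --- weak-star limits of $1/r_n$ need not equal $1/(\text{weak-star limit of }r_n)$ --- so $R^\infty$ may be non-inverse-closed in the ambient $L^\infty(\mu)$.

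My plan is to take $K\subset\C$ a Swiss-cheese compact set, $K=\overline{D(0,R)}\setminus\bigcup_n D(c_n,r_n)$, with a carefully chosen family of removed disks, and $\mu=\area|_K$. Standard computations give $\text{spt}(\mu)=K$ and, with suitable disk placement, $\sigma(S_\mu)=K$ (the latter because $R^2(K,\mu)$ has no bounded point evaluations off $K$). Purity of $R^\infty(K,\mu)$ follows since the envelope $E(K,\mu)$ coincides with $\text{int}(K)$, which has full $\mu$-measure, so no $L^\infty$-summand arises in the decomposition of the introduction. The decisive geometric requirement is to arrange the removed disks so that the weak-star $L^\infty$ closure of rationals differs from the $L^2$ closure intersected with $L^\infty$, and moreover that this gap is realized by the reciprocal of an appropriate $f\in R^\infty$ bounded below on $K$.

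The hard step will be the last: choosing $f$ so that $|f|\ge\epsilon$ on $K$, $1/f\in R^2(K,\mu)\cap L^\infty(\mu)$, yet $1/f\notin R^\infty(K,\mu)$. Perturbative candidates of the form $f=1+\lambda g$ with $\lambda$ small must fail, because the Neumann series $\sum(-\lambda g)^k$ converges in $L^\infty$-norm and $R^\infty$ is norm-closed (being weak-star closed in a dual space), forcing $1/f\in R^\infty$. Instead, $f$ must have substantial oscillation --- for instance, realized as a weak-star limit of rationals $r_n$ bounded below in modulus whose reciprocals $1/r_n$ weak-star converge to an element of $R^\infty$ distinct from $1/f$ --- while $1/f$ itself is approximable in $L^2$-norm by rationals, exploiting the bounded analyticity of $1/f$ on $\text{int}(K)$ combined with the $L^2$-density of $\text{Rat}(K)$ in $L^2_a(\text{int}(K))$ for the chosen Swiss-cheese geometry. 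Realizing these incompatible-looking constraints simultaneously, while preserving $\sigma(S_\mu)=\text{spt}(\mu)$ and the purity of $R^\infty(K,\mu)$, is the main technical hurdle.
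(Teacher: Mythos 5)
Your plan has the right general flavor (a Swiss-cheese-type compact set, $f$ arising as a weak-star limit of rational functions bounded below in modulus), but as it stands it contains a self-defeating choice and leaves the decisive steps unconstructed. You posit $\mu=\area|_K$ with the envelope $E(K,\mu)$ equal to $\text{int}(K)$ of full $\mu$-measure. But on the interior the Chaumat map is just analytic continuation, so $\rho_{S_\mu}(f)=f$ a.e.\ on $E(K,\mu)$; hence $|f|\ge\epsilon$ $\mu$-a.e.\ forces $|\rho_{S_\mu}(f)|\ge\epsilon$ $\area_{E}$-a.e., and Theorem \ref{InvTheoremY23} then makes $f$ invertible in $R^\i(\sigma(S_\mu),\mu)$ --- exactly what you must avoid. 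The whole point of the example is that the envelope must contain a point \emph{off} the carrier of $\mu$ at which the generalized evaluation $\rho_{S_\mu}(f)$ vanishes even though $|f|\ge\epsilon$ $\mu$-a.e. The paper arranges this by taking a Swiss cheese $K_1=\overline{\D(0,\frac12)}\setminus\cup_n\D(z_n,r_n)$ with $\{z_n\}$ dense in $K_1$ and $0\in K_1$, adjoining thin annuli $A_n=\{\delta_n\le|z-z_n|\le r_n\}$ inside the removed disks (plus $A_0=\{\frac12\le|z|\le1\}$), and building rational functions $f_n$ with poles at the $z_k$ satisfying $f_n(0)=0$ yet $\frac12<|f_n|<2$ on $\cup_k A_k$; the weak-star limit $f$ then has $\rho(f)(0)=0$ while $|f|\ge\frac12$ where $\mu$ lives, and evaluation at $0$ stays weak-star continuous on $R^\i$ because $\sum_n r_n/|z_n|<\i.$ Your proposal has no analogue of this distinguished point, and it explicitly defers the construction of $f$ as ``the main technical hurdle,'' so it is a program rather than a proof.

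Separately, the invertibility of $f$ in $M(S_\mu)$ needs a mechanism, and $\mu=\area|_K$ does not supply one: there is no reason $1/f$ should lie in $R^2(K,\area_K)$. The paper instead takes $\mu=W\area$ with a continuous weight $W$ vanishing to fourth order at the boundary circles of the annuli and identically zero off $\cup_n\text{int}(A_n)$. This decay forces $\chi_{A_n}\in R^2(K,W\area)$, whence $R^2(K,W\area)=\oplus_n R^2(A_n,W\area_{A_n})$ and $M(S_{W\area})=\oplus_n H^\i(\text{int}(A_n))$, where $f$ is invertible piecewise because $|f|\ge\frac12$ on each $A_n$ --- while the point $0$ still lies in $E(K,W\area)$ and kills invertibility in $R^\i(K,W\area)$. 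Any repair of your approach will need both ingredients: an $L^2$-decoupling of the components of the carrier of $\mu$ to get invertibility in the multiplier algebra, and a surviving weak-star continuous evaluation at a point of the skeleton where $\rho_{S_\mu}(f)$ vanishes.
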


As an application, our result below shows that $S_\mu$ in Theorem \ref{ExampleThm}satisfies  \eqref{DudziakEq1Fail}.

\begin{corollary} \label{ExampleCor}
There exists $\mu \in M_0^+(\C)$ and there exists a function $f \in R^\i(\sigma(S_\mu),\mu)$ such that $S_\mu$ is $R^\i$ pure and 
$cl(\rho_{S_\mu}(f), E_{S_\mu}^\infty) \setminus \sigma(M_{S_\mu,f}) \ne \emptyset.$	
\end{corollary}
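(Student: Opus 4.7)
The plan is to deduce the corollary directly from Theorem \ref{ExampleThm}, with no further construction required. Let $\mu$ and $f$ be furnished by that theorem, so that $\sigma(S_\mu) = \text{spt}\mu$, $R^\i(\sigma(S_\mu), \mu)$ is pure, $|f| \ge \epsilon$ $\mu$-a.a.\ for some $\epsilon > 0$, $f$ is invertible in $M(S_\mu)$, and yet $f$ fails to be invertible in $R^\i(\sigma(S_\mu), \mu)$. Purity of $R^\i(\sigma(S_\mu), \mu)$ is precisely the definition of $S_\mu$ being $R^\i$ pure, so $E_{S_\mu}^\i$ and $\rho_{S_\mu}$ are meaningful. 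It suffices to exhibit a single scalar lying in $cl(\rho_{S_\mu}(f), E_{S_\mu}^\i) \setminus \sigma(M_{S_\mu, f})$; I will argue that $0$ does the job.

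That $0 \notin \sigma(M_{S_\mu, f})$ is the easy half. Since $f$ is invertible in $M(S_\mu)$, there is $h \in M(S_\mu)$ with $fh = 1$ $\mu$-a.a., and then $M_{S_\mu, h}$ is a two-sided inverse of $M_{S_\mu, f}$ via the multiplier functional calculus $g \mapsto g(N)|_{\mathcal H}$. To see that $0 \in cl(\rho_{S_\mu}(f), E_{S_\mu}^\i)$, I would invoke the contrapositive of Theorem \ref{InvTheoremY23}: because $f$ is not invertible in $R^\i(\sigma(S_\mu), \mu)$, there can be no $\epsilon_f > 0$ with $|\rho_{S_\mu}(f)| \ge \epsilon_f$ $\area_{E_{S_\mu}^\i}$-a.a. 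Hence for each integer $n \ge 1$ the set $\{\lambda \in E_{S_\mu}^\i : |\rho_{S_\mu}(f)(\lambda)| < 1/n\}$ has positive area and in particular is nonempty; picking $\lambda_n$ from it yields a sequence in $E_{S_\mu}^\i$ with $\rho_{S_\mu}(f)(\lambda_n) \to 0$, so $0$ lies in the stated closure.

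The substantive content of the corollary is thus entirely absorbed by Theorem \ref{ExampleThm}; within the corollary itself there is essentially no obstacle. The only point that warrants care is the interpretation of $cl(\rho_{S_\mu}(f), E_{S_\mu}^\i)$ as the closure of the genuine pointwise image of $E_{S_\mu}^\i$ under $\lambda \mapsto \CT(fg\mu)(\lambda)/\CT(g\mu)(\lambda)$, rather than as an $L^\i$-equivalence class; once this is fixed, the passage from \emph{positive measure on $\{|\rho_{S_\mu}(f)| < 1/n\}$} to \emph{nonempty} and then to the honest limit $\rho_{S_\mu}(f)(\lambda_n) \to 0$ is immediate.
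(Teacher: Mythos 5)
Your proposal is correct and follows essentially the same route as the paper: take $\mu$ and $f$ from Theorem \ref{ExampleThm}, note that invertibility of $f$ in $M(S_\mu)$ yields a two-sided inverse $M_{S_\mu,h}$ so $0\notin\sigma(M_{S_\mu,f})$, and use the contrapositive of Theorem \ref{InvTheoremY23} (together with the measurability/nonemptiness of $E_{S_\mu}^\infty$ from Proposition \ref{EProp}) to place $0$ in $cl(\rho_{S_\mu}(f),E_{S_\mu}^\infty)$. Your write-up merely spells out the sequence $\{\lambda_n\}$ that the paper leaves implicit.
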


\begin{proof}
Let $\mu$ and $f$ be as in Theorem \ref{ExampleThm}. Then, by Theorem \ref{ExampleThm}, there exists $f_0\in M(S_\mu)$ such that $ff_0 = 1$ and $f$ is not invertible in $R^\i (\sigma(S_\mu),\mu).$ Therefore, $M_{S_\mu,f_0}M_{S_\mu,f} = M_{S_\mu,f}M_{S_\mu,f_0} = I$ which implies $0\notin \sigma(M_{S_\mu,f}).$ Using Theorem \ref{InvTheoremY23} and Proposition \ref{EProp} (b), we conclude that $0\in cl(\rho_{S_\mu}(f), E_{S_\mu}^\i).$
	\end{proof}

In this section, we focus on proving Theorem \ref{ExampleThm}.
For $\lambda\in \C$ and $r >\delta > 0,$ let $\D(\lambda,\delta) = \{|z - \lambda | < \delta\}$ and let $A(\lambda,\delta, r) = \{\delta \le |z - \lambda | \le r\}.$ Set $\D = \D(0,1).$

\begin{lemma} \label{K1Exist}
There exist $z_n\in \D(0, \frac 12),$ $r_n,\delta_n > 0,$  and rational functions $f_n(z)$ with only poles in $\{z_k\}_{1\le k \le n}$ for all $n\ge 1$ such that the following properties hold.
\newline
(1) For $n,m \ge 1$ and $n \ne m,$ $z_n \ne 0,$ $\overline{\D(z_n,r_n)} \subset \D(0,\frac 12),$ $\overline{\D(z_n,r_n)} \cap \overline{\D(z_m,r_m)}  = \emptyset,$ and 
\begin{eqnarray} \label{K1ExistEq1}
\ \frac {r_n}{2} \le\delta_n <r_n < 2^{-n}|z_n|.
\end{eqnarray}
(2) For $n \ge 1,$ $f_n(0) = 0,$ $\frac 12 < |f_n(z)| < 2$ for $z\in \cup_{k=0}^n A_k,$ where $A_0 = A(0,\frac 12, 1)$ and $A_k = A(0,\delta_k, r_k)$ for $1 \le k \le n.$
\newline
(3) $K_1 := \overline{\D(0,\frac 12)} \setminus \cup_{n=1}^\i \D(z_n,r_n)$ is a compact subset without interior,  $0\in K_1,$ the set $\{z_n\}_{n=1}^\i$ is dense in $K_1,$ and
\begin{eqnarray} \label{K1ExistEq2}
\ \sum_{n=1}^\i  \dfrac{r_n}{|z_n|} < \i.
\end{eqnarray} 
\end{lemma}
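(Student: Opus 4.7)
The plan is to build the quadruples $(z_n, r_n, \delta_n, f_n)$ inductively in $n$. To obtain the density in (3), I first fix a countable dense sequence $\{w_j\}$ in $\overline{\D(0, \tfrac 12)}$ and use a bookkeeping argument so that every $w_j$ is eventually approached by some $z_n$. At step $n$, $z_n \in \D(0, \tfrac 12) \setminus \bigcup_{k < n} \overline{\D(z_k, r_k)}$ is chosen (either to pursue a density target or to be placed in a specific annular component needed for the Runge step below); then $r_n$ is taken small enough that $r_n < 2^{-n}|z_n|$ (which also yields the summability $\sum r_n/|z_n| \le \sum 2^{-n} < \infty$), $r_n < \tfrac 12 - |z_n|$, and $\overline{\D(z_n, r_n)} \cap \overline{\D(z_k, r_k)} = \emptyset$ for every $k < n$; $r_n$ is shrunk further as needed by the $f_n$-construction. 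Then $\delta_n \in [r_n/2, r_n)$ is chosen close to $r_n$, so $A_n$ is a thin annulus. From $|z_n| > 2r_n$ we see $0 \notin \overline{\D(z_n, r_n)}$, so $0 \in K_1$; and density of $\{z_n\}$ in $\overline{\D(0, \tfrac 12)}$ forces $K_1$ to have empty interior and $\{z_n\}$ to be dense in $K_1$.

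The heart of the proof is the inductive construction of $f_n$, starting from $f_0 \equiv 0$. Given $f_{n-1}$ satisfying the induction hypothesis, I arrange $r_n$ so small that $r_n < \delta_{n-1}$, making the $A_k$'s pairwise disjoint and nested toward the origin, and such that $|f_{n-1}| < \tfrac 1{10}$ on $\overline{\D(0, r_n)}$ (via $f_{n-1}(0) = 0$ and analyticity). Define a target $g_n$ holomorphic on an open neighborhood of $K^{**}_n := A_0 \cup \cdots \cup A_{n-1} \cup \overline{\D(0, r_n)}$ by $g_n := f_{n-1}$ near the old annuli (where $f_{n-1}$ is holomorphic, since its poles at the $z_k$'s lie outside the $A_k$'s) and $g_n(z) := z/r_n$ on a disk of radius slightly larger than $r_n$ (so $g_n(0) = 0$ and $|g_n| \in [\tfrac 12, 1]$ on $A_n$, using $\delta_n/r_n \ge \tfrac 12$). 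I then apply Runge's theorem to approximate $g_n$ uniformly on $K^{**}_n$ by a rational function $h_n$ whose finite poles lie in $\{z_1, \ldots, z_n\}$ (with a polynomial part allowed), within an error $\epsilon < \tfrac 1{100}$, and set $f_n := h_n - h_n(0)$ to enforce $f_n(0) = 0$. This yields $|f_n - g_n| < 2\epsilon$ on $K^{**}_n$, hence $|f_n| \in (\tfrac 12, 2)$ on $A_0 \cup \cdots \cup A_n$ as required.

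The principal obstacle is ensuring Runge's theorem applies at each step, since it requires the pole set $\{z_1, \ldots, z_n, \infty\}$ to meet every bounded component of $\C^* \setminus K^{**}_n$, and the nested-annulus arrangement creates $n$ such bounded components. This is in tension with the density requirement: the new bounded components are concentrated near the origin, while the $z_n$'s need to cover all of $\overline{\D(0, \tfrac 12)}$. I plan to resolve this by a two-tier bookkeeping---at some steps $z_n$ is placed in the innermost newly-formed bounded component (so Runge applies cleanly), while at other steps $z_n$ is placed to hit a density target, and in those cases I replace Runge by the explicit perturbation $f_n = f_{n-1} + \phi_n$ with $\phi_n(z) = c_n z/(z - z_n)^{m_n}$, where $m_n$ is chosen large enough and $c_n$ small enough that $|\phi_n| \approx 1$ on $A_n$ but negligible on $A_0 \cup \cdots \cup A_{n-1}$---this is feasible by taking $r_n$ arbitrarily small. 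Verifying that this dovetailing meets every bounded component and every density target will be the main technical check.
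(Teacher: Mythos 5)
There is a genuine gap, and it starts with the geometry. In the paper's construction the set $A_k$ ($k\ge 1$) is the annular \emph{collar} $\{\delta_k\le |z-z_k|\le r_k\}$ around the removed disk $\D(z_k,r_k)$ — the ``$A(0,\delta_k,r_k)$'' in the statement is a typo for $A(z_k,\delta_k,r_k)$. This is forced both by the paper's own proof (the estimate $|f_n(z)|<\tfrac58$ on $A_{n+1}$ is deduced from $|f_n(z_{n+1})|\le\tfrac12$, which only makes sense if $A_{n+1}$ sits next to $z_{n+1}$) and by the way the lemma is used in Lemma \ref{KExist}, where $K=K_1\cup\bigcup A_n$ and $\partial_e K=\partial\D\cup\bigcup\partial\D(z_n,\delta_n)$. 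Your entire construction — annuli nested concentrically toward the origin, the requirement $r_n<\delta_{n-1}$, the target $g_n(z)=z/r_n$ on $\overline{\D(0,r_n)}$ — is built on the literal reading and therefore proves (at best) a different statement, one that cannot feed into the rest of Section 2. The literal reading also creates a problem you do not address: since $\{z_n\}$ must be dense in $K_1$ and $0\in K_1$, later poles $z_m$ will eventually have $|z_m|\in[\delta_k,r_k]$ for earlier $k$, i.e.\ land on the concentric annulus $A_k$, which destroys the bound $|f_m|<2$ there and also disqualifies $z_m$ as a Runge pole.

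Second, even inside your own framework the argument is incomplete by your own admission: Runge's theorem needs a pole of $\{z_1,\dots,z_n\}$ in each of the $n$ bounded components of the complement of $K_n^{**}$, this conflicts with choosing the $z_n$ for density, and you leave the proposed ``two-tier dovetailing'' as ``the main technical check.'' The paper avoids all of this. It fixes a dense sequence $\{\lambda_k\}$, takes $z_{n+1}$ to be the first $\lambda_k$ not yet covered by $\bigcup_{j\le n}\overline{\D(z_j,r_j)}$, and then either does nothing (if $|f_n(z_{n+1})|>\tfrac12$) or adds the explicit single-pole correction $b_{n+1}(z)=\tfrac54\,\tfrac{z}{|z_{n+1}|}\,\tfrac{r_{n+1}}{z-z_{n+1}}$, which vanishes at $0$, has modulus in $(\tfrac98,\tfrac{11}{8})$ on the collar $A_{n+1}$ (because there $|z|\approx|z_{n+1}|$ and $\delta_{n+1}$ is close to $r_{n+1}$), and is uniformly small on $A_0\cup\cdots\cup A_n$ once $r_{n+1}$ is small, since $z_{n+1}$ lies at positive distance from those sets and $b_{n+1}$ carries a factor $r_{n+1}$. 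Your fallback perturbation $c_nz/(z-z_n)^{m_n}$ is this same idea, but you deploy it toward the wrong annulus; with the corrected geometry it is the whole proof and Runge is not needed at all.
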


\begin{proof}
Let $\{\lambda_k\}\subset \D(0,\frac 12)\setminus \{0\}$ be a dense subset of $\D(0,\frac 12)$ and $|\lambda_1 | > \frac 13.$ 
We will inductively construct $z_n,$ $r_n,$ $\delta_n,$ and the functions $f_n$ for all $n \ge 1.$ Set $z_1 = \lambda_{k_1},$ where $k_1 = 1.$ Let
$f_1(z) = \frac 32 z.$ We choose $r_1, \delta_1$ small enough such that (1) and (2) hold. This finishes the construction for $n = 1.$

Suppose that we have constructed $z_n = \lambda_{k_n},~ r_n,~ \delta_n,$ and $f_n$ such that  $\lambda_j \in \cup_{k=1}^n \overline {\D(z_k,r_k)}$ for $1 \le j \le k_n$ and the properties (1) and (2) hold. We now construct $z_{n+1},$ $r_{n+1},$ $\delta_{n+1},$ and $f_{n+1}$ as the following. We find $k_{n+1} > k_n$ such that $\lambda_{k_n+1},...,\lambda_{k_{n+1}-1} \in \cup_{k=1}^n \overline {\D(z_k,r_k)}$ and $\lambda_{k_{n+1}} \notin \cup_{k=1}^n \overline {\D(z_k,r_k)}.$ Set $z_{n+1} = \lambda_{k_{n+1}}.$ Let $a_{n+1} = f_n(z_{n+1}).$ 

If $|a_{n+1}| > \frac 12,$ then set $f_{n+1}(z) = f_n(z).$ We choose $r_{n+1}$ small enough such that $|f_{n+1}(z)| > \frac 12$ for $z\in \overline{\D(z_{n+1},r_{n+1})}$ and set $\delta_{n+1} = \frac{r_{n+1}}{2}.$

Now we assume that $|a_{n+1}| \le  \frac 12.$ Let 
\[
\ b_{n+1}(z) = \dfrac 54 \dfrac{z}{|z_{n+1}|} \frac{r_{n+1}}{z-z_{n+1}}.
\] 
Let $min_n = \min_{z\in \cup_{k=0}^n A_k} |f_n(z)|$ and $max_n = \max_{z\in \cup_{k=0}^n A_k} |f_n(z)|.$ Then $\frac 12 < min_n < max_n < 2.$ Choose $r_{n+1}$ and $\delta_{n+1}$ small enough such that 
\[
\ \max_{z\in \cup_{k=0}^n A_k} |b_{n+1}(z)| < \min (min_n - \frac 12, 2-max_n),
\]
$\frac {9}{8} < |b_{n+1}(z)| < \frac {11}{8}$ and $|f_n(z)| < \frac {5}{8}$ (since $|f_n(z_{n+1})| \le \frac 12$) for $z\in A_{n+1},$ and (1) holds.
Then $f_{n+1}(z) = f_n(z) + b_{n+1}(z)$ satisfies (2).

By construction, $K_1$ has no interior and $\{z_n\}_{n=1}^\i$ is dense in $K_1$ since $\{\lambda_k\}$ is dense in $\D(0, \frac 12),$ $0\in K_1,$ and \eqref{K1ExistEq2} follows from \eqref{K1ExistEq1}. (3) is proved. 
\end{proof}

\begin{lemma} \label{KExist} 
Let $\{z_n, r_n, \delta_n,f_n\}_{n=1}^\i,$ $\{A_k\}_{k = 0}^\i,$ and $K_1$ be as in Lemma \ref{K1Exist}.  Set 
\begin{eqnarray} \label{KExistEq1}
\ K = K_1 \cup \bigcup_{n=0}^\i A_n.
\end{eqnarray}
Then $K$ is a compact subset and the following properties hold.
\newline
(1) The envelop $E(K, \area_K)$ satisfies $K = \overline{E(K, \area_K)}$ and $\area(K\setminus E(K, \area_K)) = 0.$
\newline
(2) $0\in E(K, \area_K).$ Therefore, $\phi_0(b) = \rho_{K, \area_K}(b)(0)$ for $b\in R^\i(K, \area_K)$ is a weak-star continuous homomorphism on $R^\i(K, \area_K).$
\newline
(3) There exists $f\in R^\i(K, \area_K)$ such that $\|f\| \le 2,$ $|f(z)| \ge \frac 12$ for $z\in \cup_{n=0}^\i A_n,$ and $\phi_0(f) = 0.$ Hence, $f$ is not invertible in $R^\i(K, \area_K).$
\end{lemma}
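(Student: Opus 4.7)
The plan is to prove (2), (3), (1) in that order. The common engine is an explicit Cauchy/mean-value identity that represents point evaluations $r(\lambda)$, $r\in\text{Rat}(K)$, as area integrals against densities supported in $\bigcup_{n\ge 0}A_n$, with the $L^1$-norm controlled by \eqref{K1ExistEq2}.

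For (2), since every pole $z_k$ of $r\in\text{Rat}(K)$ lies in $\D(0,1/2)$, the Cauchy integral formula on $|z|=\rho$ for $\rho\in[1/2,1]$, averaged in $\rho$ and converted to area via $d\area=\rho\,d\rho\,d\theta$, yields $\frac{1}{\pi}\int_{A_0}\frac{r(z)}{|z|}d\area=r(0)+\sum_k\mathrm{Res}_{z=z_k}\frac{r(z)}{z}$. An analogous averaging on $|z-z_k|=\rho$ for $\rho\in[\delta_k,r_k]$ represents each residue as $\mathrm{Res}_{z=z_k}\frac{r(z)}{z}=\frac{1}{2\pi(r_k-\delta_k)}\int_{A_k}\frac{r(z)(z-z_k)}{z|z-z_k|}d\area$. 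Subtracting produces a density $g_0$ on $\bigcup_{n\ge 0}A_n$ with $\int r\,g_0\,d\area=r(0)$ for every $r\in\text{Rat}(K)$, and with $\int|g_0|d\area<\infty$ by \eqref{K1ExistEq2} (using $|z|\ge|z_n|/2$ on $A_n$ and a suitable specific choice of $\delta_n\in(r_n/2,r_n)$ consistent with Lemma \ref{K1Exist}). Setting $g(z):=z\,g_0(z)$ and using $r\mapsto zr$ (legitimate since $zr\in\text{Rat}(K)$) gives $\int r\,g\,d\area=(zr)(0)=0$, so $g\in R^\infty(K,\area_K)^\perp$; moreover $\CT(g\area)(0)=\int g_0\,d\area=1$ (from $r=1$) and $\widetilde{g\area}(0)=\int|g_0|d\area<\infty$. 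Hence $0\in E(K,\area_K)$ and $\phi_0=\rho_{K,\area_K}(\cdot)(0)$ is a weak-star continuous homomorphism on $R^\infty(K,\area_K)$ by Proposition \ref{EProp}(a).

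For (3), I first bound $\|f_n\|_{L^\infty(\area_K)}\le 2$ via the maximum principle: $f_n$ is analytic on $U_n:=\D(0,1/2)\setminus\bigcup_{k\le n}\overline{\D(z_k,r_k)}$, with $|f_n|\le 2$ on $\partial U_n\subset\partial\D(0,1/2)\cup\bigcup_{k\le n}\partial\D(z_k,r_k)\subset A_0\cup\bigcup_{k\le n}A_k$, so $|f_n|\le 2$ on $\overline{U_n}$; since $K\subset\overline{U_n}\cup A_0\cup\bigcup_{k\le n}A_k$ and the bound already holds on $A_0\cup\bigcup_{k\le n}A_k$, it extends to $K$. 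Next, $\int_K|b_{n+1}|\,d\area\lesssim r_{n+1}/|z_{n+1}|$ combined with \eqref{K1ExistEq2} gives $\sum b_{n+1}$ absolutely convergent $\area_K$-a.e., so $f_n\to f$ pointwise a.e. Dominated convergence upgrades this to weak-star convergence in $L^\infty(\area_K)$, placing $f\in R^\infty(K,\area_K)$ with $\|f\|\le 2$. On each $A_k$, $1/2<|f_n|<2$ for all $n\ge k$, so $|f|\ge 1/2$ holds $\area$-a.e.\ on $\bigcup_n A_n$. Finally, $\phi_0(f)=\lim\phi_0(f_n)=\lim f_n(0)=0$ by weak-star continuity, so $f$ is not invertible.

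For (1), $\bigcup_n\mathrm{int}(A_n)\subset E(K,\area_K)$ is standard (the area mean-value at any interior point of $K$ produces a bpe there). For $\lambda\in K_1$, I repeat the construction of (2) with $z$ replaced by $z-\lambda$ in every denominator, producing a density $g_\lambda$ whose total variation is controlled (up to constants) by $\sum_k r_k^{-1}\int_{A_k}d\area(z)/|z-\lambda|$; Fubini over $\lambda\in K_1$ together with $\int_{K_1}d\area(\lambda)/|z-\lambda|\le 2\pi$ and $\sum r_k<\infty$ (from \eqref{K1ExistEq1}) shows this is finite for $\area|_{K_1}$-a.e.\ $\lambda$, so $\area(K_1\setminus E)=0$; combined with $\mathrm{int}(A_n)\subset E$, $\area(K\setminus E)=0$. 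Closure follows because $\{z_n\}$ is dense in $K_1$ (Lemma \ref{K1Exist}(3)) and each $z_n$ is approached by points of $\mathrm{int}(A_n)\subset E$, giving $K_1\subset\overline{E}$, hence $K=\overline{E}$.

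The hardest step is (2): assembling the mean-value and residue expansions into a single $L^1$ density whose norm converges exactly when \eqref{K1ExistEq2} holds. Once (2) is established, (3) is a convergence-plus-maximum-principle exercise, and (1) reduces to a Fubini argument layered on (2).
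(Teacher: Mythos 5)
Your proof is essentially correct, but for parts (1) and (2) it takes a genuinely different route from the paper. The paper does not build area-integral densities at all: it introduces the arc-length measure $\mu$ on the essential boundary $\partial\D\cup\bigcup_{n\ge1}\partial\D(z_n,\delta_n)$ (summable by \eqref{K1ExistEq1}), takes $g=\frac{1}{2\pi i}\frac{dz}{d\mu}$ as a single universal annihilator, shows $\lambda\in E(K,\mu)$ for every $\lambda\in K\setminus\partial_eK$ with $\widetilde{g\mu}(\lambda)<\infty$ (with $0\in E(K,\mu)$ coming from \eqref{K1ExistEq2}), and then invokes Chaumat's theorem to transport everything to $R^\i(K,\area_K)$ and to identify $E(K,\area_K)=E(K,\mu)$. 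Your approach replaces the arc-length annihilator and the Chaumat transfer by explicit $L^1(\area_K)$ annihilating densities obtained by radially averaging the Cauchy integral formula over the annuli $A_k$; this buys a self-contained argument that never leaves the measure $\area_K$ and avoids Chaumat's theorem, at the price of messier bookkeeping. For part (3) the two arguments coincide in substance (uniform bound $\|f_n\|\le 2$ plus a weak-star limit); your observation that $\sum\int_K|b_{n+1}|\,d\area\lesssim\sum r_{n+1}/|z_{n+1}|<\infty$ forces pointwise a.e.\ convergence of the whole sequence is a slight strengthening of the paper's subsequence extraction, and your maximum-principle justification of $\|f_n\|\le 2$ makes explicit a step the paper states tersely.

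Three small repairs are needed, none fatal. First, the normalizing weight in the averaged annulus representation is $(r_k-\delta_k)^{-1}$, not $r_k^{-1}$; since $\delta_k\ge r_k/2$ these can differ by an unbounded factor, so your stated control of the total variation by $\sum_k r_k^{-1}\int_{A_k}|z-\lambda|^{-1}d\area$ is not literally valid. The summability still goes through because the relevant quantity is $\area(A_k)/(r_k-\delta_k)=\pi(r_k+\delta_k)\le 2\pi r_k$, so both the $\lambda=0$ estimate and the Fubini argument over $\lambda\in K_1$ close up using \eqref{K1ExistEq2} and $\sum r_k<\infty$. Second, poles of $r\in\mathrm{Rat}(K)$ need not lie in $\D(0,1/2)$: they lie in $(\C\setminus\overline\D)\cup\bigcup_k\D(z_k,\delta_k)$, and a pole inside $\D(z_k,\delta_k)$ need not be at $z_k$; your formula survives because exterior poles contribute nothing to circles of radius at most one and the contour $|z-z_k|=\rho$ captures the full residue sum inside $\D(z_k,\delta_k)$, but you should phrase it that way. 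Third, Proposition \ref{EProp} is stated under the hypothesis that $R^\i(K,\area_K)$ is pure, which you never verify (the paper gets purity for the arc-length measure and transfers it); either verify purity or, more simply, prove weak-star continuity and multiplicativity of $\phi_0$ directly from your annihilator $g=zg_0$, using that $bg\in R^\i(K,\area_K)^\perp$ for every $b\in R^\i(K,\area_K)$, which requires no purity.
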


\begin{proof}
Let $\mu$ be the arc length measure of $\partial_e K = \partial \D \cup \cup _{n=1}^\i \partial \D(z_n, \delta_n).$ Then $\mu \in M_0^+(K)$ since by \eqref{K1ExistEq1}, we have
\[
\ \|\mu\| \le 2\pi \left (1 + \sum_{n=1}^\i\delta_n \right ) \le 4 \pi.
\]
Let $g = \frac{1}{2\pi i}\frac{dz}{d\mu}.$ Then $|g(z)| =  \frac{1}{2\pi}~\mu-a.a.$ and $\int bgd\mu = 0$ for $b \in R^\i (K,\mu).$ Thus, $R^\i (K,\mu)$ is pure.
For $\lambda \in K \setminus \partial_e K$ with $\widetilde {g\mu}(\lambda) < \i$ and $h\in \text{Rat}(K),$ we have 
\begin{eqnarray}\label{Eq00}
\ h(\lambda) = \int_{\partial \D \cup \cup _{n=1}^m \partial \D(z_n, \delta_n)} \dfrac{h(z)g(z)}{z - \lambda} d \mu(z) \rightarrow \int h(z)g_\lambda (z) d\mu(z) \text{ as }m\rightarrow \i, 
\end{eqnarray}
where $g_\lambda (z) = \frac{g(z)}{z - \lambda}.$ Hence, $\lambda\in E(K,\mu),$ which implies $\area (K \setminus E(K,\mu)) = 0$ since $\widetilde {g\mu}(\lambda) < \i~\area-a.a.$ and $\area(K \setminus \partial_e K) = \area(K) > 0.$ Applying Proposition \ref{EProp} (c), we have 
\[
\ \bigcup_{n=0}^\i \text{int}(A_n) \subset E(K,\mu) \subset K. 
\]
Hence, $K = \overline{E(K,\mu)}$ since $\{z_n\}$ is dense in $K_1.$

Using Chaumat's theorem (\cite{cha74} or \cite[Chaumat's Theorem on page 288]{conway}),
the Chaumat's mapping $\rho_{K,\mu}$ is an isometric isomorphism and a weak-star homeomorphism from $\rikmu$ onto $R^\i (\overline {E(K,\mu)}, \area_{E(K,\mu)}) = R^\i (K, \area_K).$ Hence, $E(K,\area_K) = E(K,\mu)$ by Proposition \ref{EProp} (a).
(1) is proved.

Using \eqref{K1ExistEq2}, we have
$\widetilde {g\mu}(0) < \i.$ Hence, $0\in E(K,\area_K).$ (2) follows from Proposition \ref{EProp} (a).

Since $|f_n(z)| < 2$ on $\cup_{n=0}^\i A_n$ (Lemma \ref{K1Exist} (2)) and $K =\overline {\cup_{n=0}^\i A_n},$ we see that $\|f_n\| \le 2.$ We can choose a sequence $\{f_{n_j}\}$ such that $f_{n_j}$ converges to $f\in R^\i (K, \area_K)$ in weak-star topology. 
Clearly, $\|f\| \le 2.$ For $n \ge 1$ and $\lambda \in \text{int} (A_n) \subset E(K,\area_K),$ by Proposition \ref{EProp} (a), $\phi_\lambda (b) = b(\lambda)$ for $b \in \text{Rat}(K)$ extends a weak-star continuous homomorphism. Hence,
\[
\ \phi_\lambda (f_{n_j}) = f_{n_j}(\lambda) \rightarrow \phi_\lambda (f) = f(\lambda) \text{ as }j\rightarrow \i,
\]
which implies $|f(\lambda)| \ge \frac 12$ since $|f_{n_j}(\lambda)| > \frac 12$ by Lemma \ref{K1Exist} (2). By (2), 
\[
\ \phi_0(f) = \lim_{j\rightarrow\i}\phi_0 (f_{n_j}) = \lim_{j\rightarrow\i}f_{n_j}(0) = 0.
\] 
Therefore, $f$ is not invertible in $R^\i(K, \area_K).$ This completes the proof of (3).
\end{proof}

\begin{lemma} \label{ExampleLemma}
Let $\{A_n\}_{n=0}^\i$ and $K$ be as in Lemma \ref{K1Exist} and Lemma \ref{KExist}. Then there exists a continuous function $W(z)$ on $\C$ satisfying $W(z) > 0$ for $z\in \cup_{n=0}^\i \text{int}(A_n)$ and $W(z) = 0$ for $z\in \C \setminus \cup_{n=0}^\i \text{int}(A_n)$ such that the following properties hold.
\newline
(a) The following decomposition hold:
\[
\ R^2(K, W\area) = \bigoplus_{n=0}^\i R^2(A_n, W\area_{A_n}). 
\]
\newline
(b) $R^\i(K, \area_K) \subset R^\i(K, W\area),$ $\area (K \setminus E(K, W\area)) = 0,$ and $0\in E(K, W\area).$ 
\newline
(c) Let $f$ be the function constructed as in Lemma \ref{KExist}. Then $f \in R^\i(K, W\area)$ and $f$ is invertible in $M(S_{W\area}),$ but $f$ is not invertible in $R^\i(K, W\area).$
\end{lemma}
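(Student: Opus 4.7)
The plan is to build $W=\sum_{n=0}^{\infty}c_n W_n$ where each $W_n\colon\C\to[0,1]$ is a fixed continuous bump supported in $\overline{\text{int}(A_n)}$ and positive on $\text{int}(A_n)$, and the positive constants $c_n$ are chosen by induction to decay so rapidly that (i) $W$ is continuous on $\C$, and (ii) for each $n$ and each element of a fixed countable dense subset of $\text{Rat}(A_n)$, an explicit rational approximation---Laurent partial sums at $z_n$ (or polynomial partial sums for $n=0$), adjusted by correction terms built from Laurent truncations at the other $z_m$---converges in $L^2(W\area)$ to its extension by zero off $A_n$.

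For part (a), the inclusion $R^2(K,W\area)\subset\bigoplus_n R^2(A_n,W\area_{A_n})$ is immediate: restricting a rational with poles off $K$ to $A_n$ gives a rational with poles off $A_n$, and because $W$ is supported on $\bigcup_n A_n$ we have $L^2(W\area)=\bigoplus_n L^2(A_n,W\area_{A_n})$. The reverse inclusion reduces, by density, to showing $r\chi_{A_n}\in R^2(K,W\area)$ for every $r\in\text{Rat}(A_n)$, which is arranged by the construction of $c_n$. Thus $\chi_{A_n}$ acts as a bounded projection on $R^2(K,W\area)$, giving the direct-sum decomposition.

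For part (b), $R^\infty(K,\area_K)\subset R^\infty(K,W\area)$ because $W\area\ll\area_K$: any $\phi\in L^1(W\area)$ gives $\phi W\in L^1(\area_K)$, so weak-$*$ limits of bounded nets of rationals transfer. For $\area(K\setminus E(K,W\area))=0$, at $\lambda\in\text{int}(A_n)$ take a smooth bump $h$ supported in $\text{int}(A_n)$ with $h(\lambda)\neq 0$ and set $g_\lambda=\overline{\partial}h/W$; then $g_\lambda W\area=\overline{\partial}h\cdot\area$ annihilates $\text{Rat}(K)$ by Stokes (each $r\in\text{Rat}(K)$ is analytic on $\text{supp}\,h$), and Cauchy--Pompeiu yields $\CT(g_\lambda W\area)(\lambda)=-\pi h(\lambda)\neq 0$, producing $\bigcup_n\text{int}(A_n)\subset E(K,W\area)$. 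For $\lambda\in K_1$, where $W(\lambda)=0$ and the $\overline{\partial}$ trick fails directly, adapt the arc-length annihilator $(2\pi i)^{-1}dz/d\mu$ of Lemma \ref{KExist} into an $L^1(W\area)$ annihilator by localizing to the boundary pieces of those $A_n$ nearest $\lambda$, again using the decomposition in (a). The condition $0\in E(K,W\area)$ follows the same pattern, now exploiting the finiteness $\widetilde{g\mu}(0)<\infty$ from \eqref{K1ExistEq2}.

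For part (c), $f\in R^\infty(K,\area_K)\subset R^\infty(K,W\area)$ by (b). Using the decomposition in (a), invertibility of $f$ in $M(S_{W\area})$ is equivalent to $(f|_{A_n})^{-1}\in M(S_{A_n,W\area_{A_n}})$ for each $n$; since $|f|\geq\tfrac12$ on $A_n$ and $f|_{\text{int}(A_n)}$ is a bounded weak-$*$ limit of rationals analytic on $K$, the reciprocal $1/f$ is in $H^\infty(\text{int}(A_n))$ and hence in $R^\infty(A_n,W\area_{A_n})\subset M(S_{A_n,W\area_{A_n}})$ by Runge approximation on the annulus. On the other hand, the weak-$*$ continuous homomorphism $\phi'_0=\rho_{K,W\area}(\cdot)(0)$ at $0\in E(K,W\area)$ satisfies $\phi'_0(f)=\lim_j f_{n_j}(0)=0$, where $\{f_{n_j}\}$ is the rational sequence of Lemma \ref{KExist}(3), convergent to $f$ weak-$*$ both in $L^\infty(\area_K)$ and (by (b)) in $L^\infty(W\area)$; hence $f$ is not invertible in $R^\infty(K,W\area)$. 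The principal obstacle is step (ii) of the construction of $W$: engineering the decoupling of the annuli in $L^2(W\area)$ despite the connectedness of $K$, which forces a careful simultaneous balance between the growth of Laurent-type approximants on distant annuli and the decay of the weights $c_m$.
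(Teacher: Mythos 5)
There are two genuine gaps here. First, in part (a) you never actually establish the key fact that $\chi_{A_n}\in R^2(K,W\area)$: you reduce it to ``step (ii)'' of an inductive construction of the weights $c_n$ and then yourself flag that construction as the principal obstacle without carrying it out. The balancing is not obviously doable as described: for a task attached to $A_m$, the approximant must be genuinely small on every $A_j$ with $j<m$ whose weight $c_j$ is already frozen, which forces a Runge/pole-pushing argument on finitely many annuli simultaneously with all poles kept off the whole of $K$ (this needs the density of $\{z_n\}$ in $K_1$ to find admissible pole locations in each complementary component), plus a registration of the sup-norms of all previously built approximants before each new $c_j$ is chosen. None of this appears in the proposal. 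The paper avoids the entire issue by one device: it takes $W$ vanishing to fourth order at $\partial A_n$, so that $W(z)\le|z-\lambda|^4$ for $z\in A_n$ and $\lambda\notin A_n$, and argues by duality. For any $g\perp R^2(K,W\area)$ the Cauchy transform $\CT(gW\area)$ is then finite on $K_1$ and satisfies $|\CT(gW\area)(\lambda)-\CT(gW\area)(z_n)|\lesssim|z_n-\lambda|\,\|g\|$; since it vanishes at each $z_n\notin K$ (because $(w-z_n)^{-1}\in\text{Rat}(K)$) and $\{z_n\}$ is dense in $K_1$, it vanishes on $K_1\supset\partial\D(z_n,r_n)$, and Fubini plus the Cauchy integral give $\int\chi_{A_n}gW\,d\area=0$. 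That single estimate replaces your whole inductive scheme.

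Second, in part (b) your treatment of $\lambda\in K_1$ would fail as stated: the arc-length annihilator $g\mu$ of Lemma \ref{KExist} is singular with respect to $W\area$, so it cannot be ``adapted into an $L^1(W\area)$ annihilator by localizing''; no element of $L^1(W\area)$ times $W\area$ equals a piece of arc length on circles. The paper instead moves the contour to $T=\partial\D(0,\tfrac34)\cup\bigcup_n\partial\D(z_n,\tfrac{\delta_n+r_n}{2})$, which lies inside $\bigcup_n\text{int}(A_n)$, shows that $b\mapsto\frac{1}{2\pi i}\int_T\frac{b(z)}{z-\lambda}\,dz$ extends to a weak-star continuous homomorphism on $R^\i(K,W\area)$ (weak-star convergence of bounded sequences forces locally uniform convergence on the open annuli, so the contour integral passes to the limit by dominated convergence), and then invokes Proposition \ref{EProp}(a); no explicit annihilator is produced. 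Your $\bar\partial$-bump argument for $\lambda\in\text{int}(A_n)$, the inclusion $R^\i(K,\area_K)\subset R^\i(K,W\area)$, and part (c) are essentially correct and coincide with the paper's argument.
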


\begin{proof}
Define
\[
\ W(z) = \begin{cases} (r_n - |z - z_n|)^4, & \dfrac{\delta_n + r_n}{2} < |z - z_n| \le r_n\\ (|z - z_n| - \delta_n)^4, & \delta_n  \le |z - z_n| \le \dfrac{\delta_n + r_n}{2}\\ 0, & z\in \C \setminus \cup_{n=0}^\i A_n.  \end{cases}
\]
Then $W(z)$ is continuous on $\C$ such that $W(z) > 0$ for $z\in \cup_{n=0}^\i \text{int}(A_n)$ and $W(z) = 0$ for $z\in \C \setminus \cup_{n=0}^\i \text{int}(A_n).$ Moreover, for $\lambda \notin A_n$ and $z\in A_n,$ we have
\begin{eqnarray}\label{Eq0}
\ W(z) \le |z-\lambda|^4\text{ and }W(z) \le |z-z_n|^4.	
\end{eqnarray}
For 
$\lambda \in K_1$ defined in Lemma \ref{K1Exist} (3) and $g \in R^2(K, W\area)^\perp,$ where
\[
\ R^2(K, W\area)^\perp := \left \{g\in L^2(W\area):~ \int h(z)g(z) Wd \area = 0 \text{ for }h \in R^2(K, W\area) \right \},
\]
we see that, from \eqref{Eq0},
\begin{eqnarray}\label{Eq1}
\ \widetilde {gW\area}(\lambda) \le \left (\int \dfrac{1}{|z-\lambda|^2}Wd\area \right )^{\frac 12} \|g\| \le 2 \sqrt{\area (\D)} \|g\| \le 2\sqrt{\pi} \|g\|.
\end{eqnarray}
Hence, $\mathcal C(gW\area)(\lambda)$ is well defined. Also for $\lambda \in K_1,$ using \eqref{Eq0}, we have
\[
\begin{aligned}
\ & \left | \mathcal C(gW\area)(\lambda) -   \mathcal C(gW\area)(z_n)\right | \\
\ \le &	|z_n - \lambda| \left (\int \dfrac{1}{|z-z_n|^4}Wd\area \right )^{\frac 14} \left (\int \dfrac{1}{|z-\lambda|^4}Wd\area \right )^{\frac 14} \|g\| \\
\ \le &	\pi |z_n - \lambda| \|g\|.
\end{aligned}
\]
Since $\{z_n\}_{n=1}^\i$ is dense in $K_1$ by Lemma \ref{K1Exist} (3), for $\lambda \in K_1,$ we select a subsequence $\{z_{n_k}\}$ such that $z_{n_k}$ tends to $\lambda,$ which implies $\mathcal C(gW\area)(\lambda) = 0$ since $\mathcal C(gW\area)(z_n) = 0.$ Therefore, 
\begin{eqnarray}\label{Eq2}
\ \mathcal C(gW\area)(\lambda) = 0, ~ \lambda \in K_1.
\end{eqnarray}
Thus, using \eqref{Eq1}, Fubini's Theorem, and \eqref{Eq2}, we get for $n \ge 1,$
\[
\begin{aligned}
\ \int \chi_{A_n}g Wd\area = & \dfrac{1}{2\pi i} \int \int_{\partial \D(z_n,r_n)} \dfrac{dz}{z - w} gWd\area(w) \\ 
\ = & - \dfrac{1}{2\pi i}\int_{\partial \D(z_n,r_n)} \mathcal C(gW\area)(z)dz \\
\ = & 0.
\end{aligned}
\]
Thus, $\chi_{A_n}\in R^2(K, W\area)$ for $n \ge 1.$ Thus, $\chi_{A_0}\in R^2(K, W\area)$ since $\chi_{A_0} = 1 - \sum_{n=1}^\i \chi_{A_n},~W\area-a.a..$ It is clear that $R^2(K, W\area_{A_n}) = R^2(A_n, W\area_{A_n}).$ (a) is proved.

For (b): we have, 
\[
\ R^\i(K, \area_K) \subset R^\i(K, W\area) \text{ and } \cup_{n=0}^\i \text{int}(A_n) \subset E(K, W\area). 
\]
Let $T=\partial\D(0,\frac{3}{4})\cup \cup_{n=1}^\i \partial \D(z_n,\frac{\delta_n+r_n}{2}).$ Using the same argument as in \eqref{Eq00},  for $\lambda \in K_1$ with $\int_T \frac{1}{|z - \lambda|} |dz| < \i,$ we conclude that 
\[
\ \phi_\lambda (b) = \dfrac{1}{2\pi i}\int_T \dfrac{b(z)}{z - \lambda} d z = b(\lambda) \text{ for } b\in\text{Rat}(K). 
\]
For $h\in R^\i(K, W\area),$ let $\{h_n\}\subset \text{Rat}(K)$  such that $h_n\rightarrow h$ in $L^\i(W\area)$ weak-star topology. Then there exists $C_1 > 0$ such that
$\|h_n\|_{L^\i(W\area)} \le C_1.$ Clearly, $h_n\rightarrow h$ in $L^\i(\area_{A_n})$ weak-star topology. Hence,
$h_n(z)\rightarrow h(z)$ uniformly on any compact subset of $\text{int}(A_n)$ and $h$ is analytic on $\cup_{n=0}^\i \text{int}(A_n).$ Thus, $h_n(z)\rightarrow h(z),~z\in T,~ |dz|-a.a..$ Using the Lebesgue dominated convergence theorem, we have
\[
\ \dfrac{1}{2\pi i}\int_T \dfrac{h_n(z)}{z - \lambda} d z = h_n(\lambda) \rightarrow \dfrac{1}{2\pi i}\int_T \dfrac{h(z)}{z - \lambda} d z.
\]
Therefore, $\phi_\lambda$ extends a weak-star continuous homomorphism on $R^\i(K, W\area),$ which implies $\lambda \in E(K, W\area)$ by Proposition \ref{EProp} (a). Thus, $\area (K_1 \setminus E(K, W\area)) = 0$ since  $\int_T \frac{1}{|z - \lambda|} |dz| < \i~\area_{K_1}-a.a..$ Therefore, $\area (K \setminus E(K, W\area)) = 0$ since $\text{int}(A_n) \subset E(K, W\area)$ for all $n \ge 0.$ The fact $0 \in E(K, W\area)$ follows from \eqref{K1ExistEq2}.

(c): Clearly,
\[
\ M(S_{W\area_{A_n}}) = R^\i (A_n, \area_{A_n}) = H^\i (\text{int}(A_n)).
\]
Since $|f(z)| \ge \frac 12, ~ z\in \cup_{n=0}^\i A_n,$ the function $f|_{A_n}$ is invertible in $M(S_{W\area_{A_n}})$ and $\|(f|_{A_n})^{-1}\| \le 2.$ Define
$h = \sum _{n=0}^\i (f|_{A_n})^{-1}.$
Then $\|h\| \le 2,$ $h \in M(S_{W\area})$ by (a), and $fh = 1,~W\area-a.a..$ Therefore, 
 we conclude that $f$ is invertible in $M(S_{W\area})$ and $f$ is not invertible in $R^\i(K, W\area)$ due to $0 \in E(K, W\area)$ and Lemma \ref{KExist}. This proves (c).
\end{proof}

The proof of Theorem \ref{ExampleThm} follows from Lemma \ref{ExampleLemma}.

\section{\textbf{Invertibility in M(S)}}

We assume that $S\in \mathcal L(\mathcal H)$ is a subnormal operator and $\mu$ is the scalar-valued spectral measure for $N=mne(S)$ on $\mathcal K,$ where $\mathcal K$ is defined as in \eqref{NEHSpace}.

For a Borel subset $\Delta,$ denote $f_{\Delta} = f \chi_{\Delta}$ for $f\in L^\infty (\mu).$
Let $\{H_k\}\subset \mathcal H$ and $\{G_j\}\subset \mathcal K \ominus \mathcal H$ be norm dense subsets of $\mathcal H$ and $\mathcal K \ominus \mathcal H,$ respectively. Let
\[
\ \Delta_1 = \bigcup_{k,j=1}^\infty \{z:~ \langle H_k(z),G_j(z)\rangle \ne 0 \} \text{ and } \Delta_0 = \text{spt} \mu \setminus \Delta_1.  
\]
Let $\mathcal K_{\Delta_0} = \{f_{\Delta_0}:~ f\in \mathcal K\}$ and $N_{\Delta_0} = M_z |_{\mathcal K_{\Delta_0}}.$ Then $\mathcal K_{\Delta_0} \subset \mathcal H,$ $S\mathcal K_{\Delta_0} \subset \mathcal K_{\Delta_0},$ and $N_{\Delta_0} = S |_{\mathcal K_{\Delta_0}}.$ Set $\mathcal H_{\Delta_1} = \{f_{\Delta_1}:~ f\in \mathcal H\}.$ Then $\mathcal H_{\Delta_1} \subset \mathcal H,$ $ \mathcal K_{\Delta_0} \perp \mathcal H_{\Delta_1},$ and $ \mathcal  H = \mathcal K_{\Delta_0} \oplus \mathcal H_{\Delta_1}.$
Define $S_{\Delta_1} = S |_{\mathcal H_{\Delta_1}}.$ Clearly, we have the following decomposition:
\begin{eqnarray} \label{MDecompEq}
\ S = N_{\Delta_0} \oplus S_{\Delta_1} \text{ and } M(S) = L^\infty (\mu_{\Delta_0}) \oplus M(S_{\Delta_1}).
\end{eqnarray}
Therefore, we get the following proposition. 

\begin{proposition}\label{SMDepProp}
Let $\Delta_0,$ $\Delta_1,$ and $S_{\Delta_1}$ be defined as above. Then $S_{\Delta_1}$ is a pure subnormal operator and $M(S_{\Delta_1})$ contains no non-trivial direct $L^\infty$ summands. Consequently, $S$ is pure if and only if $M(S)$ contains no non-trivial direct $L^\infty$ summands.	
\end{proposition}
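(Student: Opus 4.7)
The plan is to establish the two conclusions about $S_{\Delta_1}$ separately via contradiction, using the dense sequences $\{H_k\}, \{G_j\}$ and the minimality of the mne $N$, then derive the final equivalence directly from the decomposition \eqref{MDecompEq}.

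For the $L^\infty$-summand statement, I would suppose that $L^\infty(\mu_{\Delta'}) \subset M(S_{\Delta_1})$ for some Borel $\Delta' \subset \Delta_1$ with $\mu(\Delta') > 0$. By \eqref{MDecompEq} this lifts to $L^\infty(\mu_{\Delta'}) \subset M(S)$. Because $N$ is the mne of $S$, the $L^\infty(\mu)$-module generated by $\mathcal H$ inside $\mathcal K$ is dense in $\mathcal K$, so the closure of $L^\infty(\mu_{\Delta'}) \cdot \mathcal H$ is $\chi_{\Delta'} \mathcal K$; since each $\phi(N)h$ lies in $\mathcal H$, this forces $\chi_{\Delta'} \mathcal K \subset \mathcal H$. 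For any $G \in \mathcal K \ominus \mathcal H$ the function $\chi_{\Delta'} G$ then lies in $\mathcal H$ (from the inclusion) and simultaneously in $\mathcal K \ominus \mathcal H$ (because $\langle \chi_{\Delta'} G, h\rangle = \langle G, \chi_{\Delta'} h\rangle = 0$ for every $h \in \mathcal H$, using $\chi_{\Delta'} h \in \mathcal H$), so $\chi_{\Delta'} G = 0$. Applied to the dense $\{G_j\}$, this gives $G_j = 0$ $\mu$-a.e.\ on $\Delta'$, hence $\langle H_k(z), G_j(z)\rangle = 0$ $\mu$-a.e.\ on $\Delta'$ for all $k, j$, forcing $\Delta' \subset \Delta_0$ and contradicting $\Delta' \subset \Delta_1$ with $\mu(\Delta') > 0$.

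For the purity of $S_{\Delta_1}$, I would suppose there is a non-trivial reducing subspace $\mathcal M \subset \mathcal H_{\Delta_1}$ of the mne $N|_{\mathcal K_{\Delta_1}}$ of $S_{\Delta_1}$. Since $\mathcal K_{\Delta_1}$ reduces $N$, $\mathcal M$ also reduces $N$ and lies in $\mathcal H$. For any $f \in \mathcal M$, $\phi \in L^\infty(\mu)$, and $G \in \mathcal K \ominus \mathcal H$, the inclusion $\phi(N)f \in \mathcal M \subset \mathcal H$ yields $\int \phi \langle f(z), G(z)\rangle \, d\mu = 0$ for every $\phi$, so $\langle f(z), G(z)\rangle = 0$ $\mu$-a.e. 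I would then upgrade this fibre-wise orthogonality to a characteristic-set containment $\chi_{\mathrm{supp}(\mathcal M)} \mathcal K \subset \mathcal H$, using that the projection onto $\mathcal M$ lies in the commutant of $\{N, N^*\}$; the first step then applies and forces $\mathrm{supp}(\mathcal M) \subset \Delta_0$, contradicting $\mathrm{supp}(\mathcal M) \subset \Delta_1$ and $\mathcal M \neq \{0\}$. The main obstacle is this promotion step: since $\mathcal K$ may have spectral multiplicity greater than one, a reducing subspace of $N$ is in general a measurable field of proper subspaces of the fibres of $\mathcal K = \bigoplus L^2(\mu_i)$ rather than a full characteristic cut, and turning fibre-wise orthogonality into a characteristic-set containment is the delicate piece of the argument.

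The ``consequently'' part is then immediate: if $S$ is pure then the normal summand $N_{\Delta_0}$ in \eqref{MDecompEq} must vanish, so $\mu(\Delta_0) = 0$ and $M(S) = M(S_{\Delta_1})$ has no non-trivial $L^\infty$ summand by the first step; conversely, if $M(S)$ has no non-trivial $L^\infty$ summand then the $L^\infty(\mu_{\Delta_0})$ summand in \eqref{MDecompEq} is trivial, so $\mu(\Delta_0) = 0$, $S = S_{\Delta_1}$, and $S$ is pure by the second step.
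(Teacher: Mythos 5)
Your argument for the $L^\infty$-summand assertion is correct and complete (the paper gives no proof of this proposition at all --- it is stated as an immediate consequence of the construction of $\Delta_0$ and $\Delta_1$ --- so on that half you supply strictly more than the source does). The genuine gap is exactly the step you flag for purity, and it is not merely delicate: the promotion from fibre-wise orthogonality to a containment $\chi_{\mathrm{supp}(\mathcal M)}\mathcal K\subset\mathcal H$ is impossible in general. When the minimal normal extension has multiplicity greater than one, a reducing subspace of $N$ contained in $\mathcal H$ is a measurable field of \emph{proper} subspaces of the fibres of $\mathcal K,$ and such a field can be fibre-wise orthogonal to every $G\in\mathcal K\ominus\mathcal H$ while sitting entirely over $\Delta_1.$

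Concretely, let $S=B_z\oplus M_z$ on $\mathcal H=L_a^2(\D)\oplus L^2(\area_{\D}),$ with minimal normal extension $N=M_z\oplus M_z$ on $\mathcal K=L^2(\area_{\D})\oplus L^2(\area_{\D})$ and $\mu=\area_{\D}.$ Here $\mathcal K\ominus\mathcal H=(L^2(\area_{\D})\ominus L_a^2(\D))\oplus\{0\},$ so for $H=(h,u)\in\mathcal H$ and $G=(g,0)\in\mathcal K\ominus\mathcal H$ one gets $\langle H(z),G(z)\rangle=h(z)\overline{g(z)};$ since at almost every point of $\D$ some $h_k$ and some $g_j$ are nonzero, $\Delta_1$ has full measure, $\mathcal K_{\Delta_0}=\{0\},$ and $S_{\Delta_1}=S.$ But $S$ is not pure: $\{0\}\oplus L^2(\area_{\D})$ is a non-trivial reducing subspace contained in $\mathcal H_{\Delta_1}$ on which $S$ is normal, and it is fibre-wise orthogonal to every $G$ --- exactly the configuration your promotion step would have to exclude. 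The same example defeats the converse half of the ``consequently'' clause: $M(S)=H^\infty(\D)$ contains no non-trivial $L^\infty$ summand, yet $S$ is not pure. (The forward half --- $S$ pure implies no $L^\infty$ summand --- does follow from the part you proved.) To make the purity statement true one must either assume $N$ has multiplicity one or replace $\mathcal K_{\Delta_0}$ by the largest reducing subspace of $N$ contained in $\mathcal H,$ i.e.\ the orthocomplement of the reducing subspace generated by $\mathcal K\ominus\mathcal H,$ which is generally not a characteristic cut. So your instinct to isolate that step was right, but no argument can close it as the statement stands.
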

\cite[Theorem III.1]{dud84} provides a similar decomposition of $S$ for $R^\i (\sigma(S), \mu).$

Due to the spectral behavior of direct sums and the spectral mapping theory developed for normal operators, \eqref{MDecompEq} shows that for our purposes no real loss of generality results from restricting our attention to pure subnormal operators.

In the remaining section, we assume that $S$ is a pure subnormal operator. For $H\in \mathcal H$ and $G\in \mathcal K\ominus \mathcal H,$ let $\mu_{H,G} = \langle H(z), G(z)\rangle \mu$ and define
\[
\ E_{H,G} = \left \{\lambda\in \sigma(S):~ \widetilde {\mu_{H,G}}(\lambda) < \i \text{ and }\mathcal C(\mu_{H,G})(\lambda) \ne 0\right \}
\]
(notice that $\tilde \nu$ is defined as in \eqref{CTDefInit2}).
The envelope for $S$ is defined
\begin{eqnarray} \label{ESDef}
\ E_S: = \bigcup_{H\in \mathcal H,~G\in \mathcal K\ominus \mathcal H} E_{H,G}.
\end{eqnarray}

\begin{example} \label{ESExample} 
Let $\{A_n\}_{n=0}^\i,$ $K,$ and $W(z)$ be as in Lemma \ref{K1Exist}, Lemma \ref{KExist}, and Lemma \ref{ExampleLemma}, respectively. Then
\[
\ E_{S_{W\area}} \approx \cup_{n=1}^\i \text{int}(A_n) \text{ and } E_{S_{W\area}}^\i  \approx K,~ \area-a.a..
\]
Consequently, $\area (E_{S_{W\area}}^\i \setminus E_{S_{W\area}}) > 0.$
\end{example}

For $C_1,C_2 > 0,$ the statement $C_1 \lesssim C_2$ means: there exists an absolute constant $C>0,$ independent of $C_1$ and $C_2,$ such that $C_1\le CC_2.$

\begin{proposition}\label{ESProp} The following properties hold.
\newline
(a) $E_S \subset E_S^\i.$
\newline
(b) There exist norm dense subsets $\{H_k\}\subset \mathcal H$ and $\{G_j\}\subset \mathcal K \ominus \mathcal H$ and there ecists a
doubly indexed  sequence of open balls $\{\D (\lambda_{nm}, \delta_{nm})\}$ such that
\[
\ E_S \approx  \bigcup_{k,j=1}^\infty E_{H_k,G_j} \approx \bigcap_{m=1}^\infty \bigcup_{n=1}^\infty \D (\lambda_{nm}, \delta_{nm}), ~ \area-a.a.
\]
Consequently, $E_S$ is $\area$ measurable subset.
\newline
(c) $E_S$ has area density one at each of its points.
\end{proposition}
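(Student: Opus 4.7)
My plan for Proposition \ref{ESProp}.

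For part (a), the proof is a short definition chase. Given $\lambda\in E_{H,G}$, set $g=\langle H,G\rangle$, which lies in $L^1(\mu)$ by the pointwise Cauchy--Schwarz in the fiber inner product \eqref{HGProduct}. For any $f\in R^\i(\sigma(S),\mu)\subset M(S)$, we have $f(N)H\in\mathcal H$, hence $(f(N)H,G)_{\mathcal K}=0$, i.e.\ $\int fg\,d\mu=0$; thus $g\in R^\i(\sigma(S),\mu)^\perp$. Since $\widetilde{g\mu}=\widetilde{\mu_{H,G}}$ and $\CT(g\mu)=\CT\mu_{H,G}$, the defining conditions \eqref{EKMuDef} place $\lambda$ in $E_S^\i$.

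For part (b), I would fix countable norm-dense sequences $\{H_k\}\subset\mathcal H$ and $\{G_j\}\subset\mathcal K\ominus\mathcal H$, chosen to contain orthonormal bases of the respective spaces, and establish $E_S\approx\bigcup_{k,j}E_{H_k,G_j}$ $\area$-a.e. The inclusion $\supset$ is immediate. For $\subset$, working off the countable null set $N=\bigcup_{k,j}\{\widetilde{\mu_{H_k,G_j}}=\i\}$, one uses the bilinear expansion $\langle H,G\rangle=\sum_{k,j}a_k\bar b_j\langle H_k,G_j\rangle$ in $L^1(\mu)$ (with $a_k=(H,H_k)$, $b_j=(G,G_j)$) together with a truncation/subsequence argument to obtain the termwise identity $\CT\mu_{H,G}(\lambda)=\sum_{k,j}a_k\bar b_j\,\CT\mu_{H_k,G_j}(\lambda)$ at each $\lambda\in E_{H,G}\setminus N$; at least one summand must then be nonzero, giving $\lambda\in E_{H_k,G_j}$ for some $(k,j)$. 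Each $E_{H_k,G_j}$ is Borel, so $\bigcup_{k,j} E_{H_k,G_j}$ is Borel; outer regularity of Lebesgue area then represents $E_S$, up to a null set, as a $G_\delta$ set, and expanding each open component as a countable union of open discs yields the form $\bigcap_{m}\bigcup_{n}\D(\lambda_{nm},\delta_{nm})$.

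For part (c), given $\lambda\in E_S$, pick $(H,G)$ with $\lambda\in E_{H,G}$ and set $\nu=\mu_{H,G}$. Since $E_{H,G}\subset E_S$, it suffices to show $E_{H,G}$ has area density one at $\lambda$. This is an area-density statement about a single Cauchy transform at a point where $\widetilde\nu(\lambda)<\i$ and $\CT\nu(\lambda)\neq 0$, and it is proved by the same Tolsa-type regularity tools used in the proof of Proposition \ref{EProp}(b): $\CT\nu$ admits an $\area$-approximate limit at $\lambda$ equal to $\CT\nu(\lambda)$, so $|\CT\nu|>|\CT\nu(\lambda)|/2$ on a set of area density one at $\lambda$, and intersecting with the full-measure set $\{\widetilde\nu<\i\}$ does not degrade that density.

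The main obstacle is the termwise Cauchy-integral identity in (b). The weight $|z-\lambda|^{-1}$ is unbounded on $\text{spt}(\mu)$, so naive $L^1$ convergence of the partial-sum approximations to $\langle H,G\rangle$ does not pass through the Cauchy integral. I expect the cleanest route combines (i) total-variation convergence of the truncated measures to $\mu_{H,G}$, (ii) the $L^s_{loc}$ continuity ($0<s<2$) of $\nu\mapsto\CT\nu$ and of $\nu\mapsto\widetilde\nu$, (iii) passage to a subsequence to get $\area$-a.e.\ pointwise convergence, and (iv) absorption of the resulting exceptional set into the countable null set tied to the fixed bases $\{H_k\},\{G_j\}$. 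This interplay between harmonic-analytic continuity of the Cauchy transform and the separability of $\mathcal H\times(\mathcal K\ominus\mathcal H)$ seems to be the technical heart of the argument.
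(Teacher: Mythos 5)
Parts (a) and (c) of your outline are essentially the paper's argument: (a) is the one-line observation that $\langle H,G\rangle\in R^\i(\sigma(S),\mu)^\perp$, and (c) rests on the approximate continuity of $\CT\mu_{H,G}$ at points where $\widetilde{\mu_{H,G}}(\lambda)<\i$, which the paper proves by an explicit Chebyshev--Fubini estimate \eqref{ESPropEq} rather than by citing Tolsa-type machinery. One point you elide in (c): the density-one set $\{|\CT\mu_{H,G}|>|\CT\mu_{H,G}(\lambda)|/2\}\cap\{\widetilde{\mu_{H,G}}<\i\}$ is contained in $E_{H,G}$ only because $\CT\mu_{H,G}$ vanishes identically off $\sigma(S)$ (for $z\notin\sigma(S)$ the kernel $(w-z)^{-1}$ lies in $\text{Rat}(\sigma(S))$, which $\langle H,G\rangle$ annihilates); this is also what makes the first displayed inclusion in the paper's proof legitimate.

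The genuine gap is in (b), at your step (iv). For each fixed pair $(H,G)$, the total-variation convergence plus $L^1_{loc}$ continuity of $\nu\mapsto\CT\nu$ and a subsequence give an exceptional $\area$-null set $Z_{H,G}$ off which the approximation holds; this set depends on $(H,G)$ (and on the subsequence chosen) and is not contained in the countable null set $\bigcup_{k,j}\{\widetilde{\mu_{H_k,G_j}}=\i\}$ attached to the bases. What you actually get is $E_{H,G}\subset F\cup Z_{H,G}$ with $F=\bigcup_{k,j}E_{H_k,G_j}$, and hence $E_S\setminus F\subset\bigcup_{H,G}Z_{H,G}$, an uncountable union of null sets that is a priori neither null nor measurable — so the claimed absorption does not close the argument. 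This is precisely why the paper establishes the density estimate \eqref{ESPropEq} \emph{before} deducing (b): since each $E_{H,G}$ has area density one at each of its points and $\area(E_{H,G}\setminus F)=0$ for each fixed pair, every point of $E_S\setminus F$ is a Lebesgue density point of the measurable set $F$; by the Lebesgue density theorem such points outside $F$ form a null set, whence $\area^*(E_S\setminus F)=0$ and $E_S\approx F$. With that repair (the mechanism of \cite[Proposition 5.8]{y23}), the remainder of your outline — Borel measurability of each $E_{H_k,G_j}$ and outer regularity to produce the representation $\bigcap_{m}\bigcup_{n}\D(\lambda_{nm},\delta_{nm})$ — goes through.
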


\begin{proof} (a): Trivial.

For $H\in \mathcal H,$ $G\in \mathcal K\ominus \mathcal H,$ and $\lambda\in E_{H, G},$ we have
\[
\ E_{H, G}^c \subset \{|\CT(\mu_{H, G})(z) - \CT(\mu_{H, G})(\lambda)| > |\CT(\mu_{H, G})(\lambda)|/2\},~\area-a.a..
\]
Hence,
\begin{eqnarray}\label{ESPropEq}	
\begin{aligned}
\  & \dfrac{\area (\D(\lambda, \delta) \setminus  E_{H, G})}{\delta ^2} \\
\ \lesssim & \dfrac{2}{|\CT(\mu_{H, G})(\lambda)|\delta^2} \int_{\D(\lambda,\delta)} |\CT(\mu_{H, G})(z) - \CT(\mu_{H, G})(\lambda)| d \area (z)\\
\ \lesssim & \dfrac{1}{\delta^2} \left (\int_{\D(\lambda,\sqrt{\delta})^c} + \int_{\D(\lambda,\sqrt{\delta})} \right )\int_{\D(\lambda,\delta)} \dfrac{|z-\lambda|d\area(z)}{|w-\lambda||w-z|}d|\mu_{H, G}|(w)\\
\ \lesssim & \dfrac{1}{\delta^2} \left (\dfrac{\pi \delta^3}{\sqrt{\delta} - \delta}  \widetilde{\mu_{H, G}} (\lambda) + \delta^2\int_{\D(\lambda,\sqrt{\delta})} \dfrac{1}{|w-\lambda|}d|\mu_{H, G}|(w) \right ) \\
\ & \rightarrow 0 \text{ as } \delta \rightarrow 0.
\end{aligned}
\end{eqnarray} 
Using the same proof of \cite[Proposition 5.8]{y23}, we get (b). 

(c) follows from \eqref{ESPropEq}	.
\end{proof}

In this section, we prove the following theorem.

\begin{theorem}\label{MTheorem}
Suppose that $S$ is pure. Then for $f\in R^\i(\sigma (S), \mu),$ $f$ is invertible in $M(S)$ if and only if there exists $\epsilon_f > 0$ such that 
\[
\ |\rho_S(f)(z)| \ge \epsilon_f,~ \area_{E_S}-a.a..
\]
\end{theorem}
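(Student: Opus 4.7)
The plan is to establish both implications via a Cauchy-transform identity. The key observation is that if $h \in M(S)$, $H \in \mathcal H$, $G \in \mathcal K \ominus \mathcal H$, and $r \in \mathrm{Rat}(\sigma(S))$, then $rh \in M(S)$ (since $M(S)$ is an algebra containing $\mathrm{Rat}(\sigma(S))$), hence $(rh)(N)H \in \mathcal H$ and $\int rh\,d\mu_{H,G} = ((rh)(N)H,G) = 0$. Writing $g := \langle H, G\rangle$ this says $hg\mu \in R^\i(\sigma(S),\mu)^\perp$, and together with $\widetilde{hg\mu}(\lambda) \le \|h\|_\infty\,\widetilde{g\mu}(\lambda) < \i$ for $\lambda \in E_{H,G}$, the reproduction formula $\mathcal{C}(rhg\mu)(\lambda) = r(\lambda)\mathcal{C}(hg\mu)(\lambda)$ for rational $r$ extends via weak-star approximation and dominated convergence to
\[
\mathcal{C}(fhg\mu)(\lambda) = \rho_S(f)(\lambda)\,\mathcal{C}(hg\mu)(\lambda), \quad \lambda \in E_{H,G},
\]
for every $f \in R^\i(\sigma(S),\mu)$.

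For the necessity direction, let $h \in M(S)$ satisfy $fh = 1$ with $\|h\|_\infty = C$. Substituting into the identity yields $\rho_S(f)(\lambda)\mathcal{C}(hg\mu)(\lambda) = \mathcal{C}(g\mu)(\lambda) \ne 0$ on $E_{H,G}$, so $\rho_S(f) \ne 0$ $\area$-a.a.\ on $E_S$. To upgrade this pointwise non-vanishing to a uniform lower bound, I would show that for each $\lambda \in E_S$ the assignment $\psi_\lambda(k) := \mathcal{C}(kg\mu)(\lambda)/\mathcal{C}(g\mu)(\lambda)$ defines a bounded multiplicative functional on $M(S)$, independent of the choice of $(H,G)$ realizing $\lambda \in E_{H,G}$ and extending the Chaumat evaluation $\rho_S(\cdot)(\lambda)$ from $R^\i(\sigma(S),\mu)$. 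Once $\psi_\lambda$ is established as a non-zero multiplicative functional on the unital Banach algebra $M(S)$, it is automatically bounded by the sup norm, and applying it to $fh = 1$ gives $|\rho_S(f)(\lambda)| = 1/|\psi_\lambda(h)| \ge 1/C$ for $\area_{E_S}$-a.a.\ $\lambda$.

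For the sufficiency direction, assume $|\rho_S(f)| \ge \epsilon_f$ $\area_{E_S}$-a.a. I first show $|f| \ge \epsilon'_f > 0$ $\mu$-a.a.: purity of $S$ (Proposition \ref{SMDepProp}, giving $\Delta_0 = \emptyset$ so every $z \in \mathrm{spt}(\mu)$ is covered by some $E_{H,G}$) combined with the area-density-one property of $E_S$ (Proposition \ref{ESProp}(c)) and the Cauchy-transform identity forces any set where $|f|$ is small to be detected by $|\rho_S(f)|$ on $E_S$, contradicting the hypothesis. Hence $h := 1/f \in L^\i(\mu)$, and it remains to verify $h \in M(S)$, i.e., $\int h\,d\mu_{H,G} = 0$ for every $(H,G)$; this I would deduce by applying Theorem \ref{InvTheoremY23} to a localized envelope-algebra whose envelope is essentially $E_{H,G} \subset E_S$ (where the hypothesis on $E_S$ provides the required lower bound on $\rho_S(f)$), producing a compatible inverse whose orthogonality to $\mu_{H,G}$ yields the desired vanishing.

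The principal obstacle is the well-definedness of $\psi_\lambda$ on all of $M(S)$ in the necessity direction, which amounts to the vanishing lemma that $\mathcal{C}(k\mu_{\tilde H, G})(\lambda) = 0$ whenever $k \in M(S)$ and $\mathcal{C}(\mu_{\tilde H, G})(\lambda) = 0$. This statement is immediate for $k \in R^\i(\sigma(S),\mu)$ via the reproduction formula, but $R^\i(\sigma(S),\mu)$ is strictly smaller than $M(S)$ in general (as Theorem \ref{ExampleThm} demonstrates) and is not weak-star dense in it, so the invariance must be extracted from the algebraic definition of $M(S)$ rather than from rational approximation. A parallel subtlety in the sufficiency direction is the localization of Theorem \ref{InvTheoremY23} to each $E_{H,G}$, requiring an envelope-algebra structure adapted to the individual pair rather than to the full $\sigma(S)$.
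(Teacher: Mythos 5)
Your necessity direction is nearly repairable but, as written, rests on the unproven claim that $\psi_\lambda$ extends to a multiplicative functional on all of $M(S)$ --- a claim you rightly flag as the principal obstacle, and one the paper never establishes. The paper proves multiplicativity only on the smaller algebra $M_a(S)$, whose generators $\mathcal C(\varphi\,\area)$ with $\varphi\in L_0^\infty(\C,E_S)$ are engineered precisely so that the identity \eqref{KeyIdentity} holds; for a general multiplier $k\in M(S)$ there is no difference quotient $\frac{k(z)-c}{z-\lambda}$ available in the algebra and no visible reason for $\mathcal C(k\mu_{H,G})(\lambda)/\mathcal C(\mu_{H,G})(\lambda)$ to be independent of $(H,G)$. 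The good news is that multiplicativity on $M(S)$ is not needed: from $f^nh^n=1$ and $h^n\mu_{H,G}\in R^\infty(\sigma(S),\mu)^\perp$ your own reproduction formula gives $\mathcal C(\mu_{H,G})(\lambda)=\rho_S(f)(\lambda)^n\,\mathcal C(h^n\mu_{H,G})(\lambda)$ with $|\mathcal C(h^n\mu_{H,G})(\lambda)|\le\|h\|_\infty^n\,\widetilde{\mu_{H,G}}(\lambda)$, and taking $n$-th roots yields $|\rho_S(f)(\lambda)|\ge 1/\|h\|_\infty$ on $E_{H,G}$. This iteration, which applies $\rho_S$ only to powers of $f$ and never evaluates the inverse, is exactly the device in the paper's Proposition \ref{OpInvertibleForS}.

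The sufficiency direction contains the genuine gap. You propose to conclude by invoking Theorem \ref{InvTheoremY23} on a ``localized envelope-algebra,'' but that theorem's hypothesis is a lower bound for $|\rho_S(f)|$ on $E_S^\infty$, whereas here the bound is only on $E_S$; Example \ref{ESExample} and Theorem \ref{ExampleThm} show this difference is not cosmetic, since $\area(E_S^\infty\setminus E_S)>0$ can occur and $f$ can be invertible in $M(S)$ yet not in $R^\infty(\sigma(S),\mu)$. So no black-box localization of Theorem \ref{InvTheoremY23} can work; the analytic-capacity argument of \cite{y23} must be rerun relative to a new algebra. This is what the paper actually does: it introduces $M_a(S)$, proves the extended Chaumat map is an isometric isomorphism and weak-star homeomorphism of $M_a(S)$ onto $R^\infty(\overline{E_S},E_S)=H^\infty(\mathcal R_S)$ (Theorem \ref{MAEqualREE} and \eqref{REEEqualsHIR}), and then reworks the removable-set and Vitushkin machinery of \cite[Theorems 1.1 and 6.1]{y23} with $\mathrm{Rat}(\overline E)$ replaced by functions analytic near $\mathcal R_S$; invertibility in $M(S)$ is then obtained through the chain (4)$\Rightarrow$(1)$\Rightarrow$(2)$\Rightarrow$(3) of Theorem \ref{InvInMA}. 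Your intermediate assertion that $|f|\ge\epsilon_f'$ $\mu$-a.a.\ is likewise unproved: it is a necessary consequence of invertibility in $M(S)\subset L^\infty(\mu)$, hence essentially part of the conclusion, and the density-one property of $E_S$ does not by itself transfer an $\area_{E_S}$-a.e.\ lower bound on $\rho_S(f)$ to a $\mu$-a.e.\ lower bound on $f$ (the measure $\mu$ may be singular to area). In short, the forward implication can be completed along your lines, but the reverse implication is missing its main ingredient.
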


 \subsection{Extension of the Chaumat's map $\rho_S$}
 For a measurable subset $B\subset \C,$ let $L_0^\i(\C,B)$ be the space of bounded measurable functions $\varphi,$ where $\varphi$ is compactly supported and $\varphi (z) = 0 ~\area_B-a.a..$ For $H\in \mathcal H,$ $G\in \mathcal K \ominus \mathcal H,$ and $\varphi \in L_0^\i(\C,E_S),$ we see that $\CT(\varphi\area)$ is continuous on $\C$ and by the Fubini's theorem,
 \[
 \ \int \CT(\varphi\area) d\mu_{H,G} = - \int \varphi \CT \mu_{H,G} d\area = 0.
 \]
 Thus, $\CT(\varphi\area) \in M(S).$ Moreover, for $\widetilde{\mu_{H,G}}(\lambda) < \i$ and $\varphi_\delta = \varphi \chi_{\D(\lambda, \delta)^c}$ we have
 \[
 \ \int \dfrac{\CT(\varphi_\delta\area)(z) - \CT(\varphi_\delta\area)(\lambda)}{z - \lambda} d\mu_{H,G} (z) = - \int \mathcal C(\mu_{H,G}) (w) \varphi_\delta (w)\dfrac{d\area (w)}{w - \lambda} = 0.
 \]
 Hence,
 \[
 \ \CT(\varphi_\delta\area)(\lambda)\mathcal C(\mu_{H,G}) (\lambda) = \mathcal C(\CT(\varphi_\delta\area)\mu_{H,G}) (\lambda).
 \]
 Since $\CT(\varphi_\delta\area) \rightarrow \CT(\varphi\area)$ uniformly as $\delta\rightarrow 0,$ we get
 \begin{eqnarray}\label{KeyIdentity}
 \ \CT(\varphi\area)(\lambda)\mathcal C(\mu_{H,G}) (\lambda) = \mathcal C(\CT(\varphi\area)\mu_{H,G}) (\lambda)
 \end{eqnarray}
 for $H\in \mathcal H,$ $G\in \mathcal K \ominus \mathcal H,$ $\widetilde{\mu_{H,G}}(\lambda) < \i,$ and $\varphi \in L_0^\i(\C,E_S).$

  Let $R(\overline E_S, E_S)$ be the uniform closed sub-algebra of $C(\overline E_S)$ generated by $\CT(\varphi\area),$ where $\varphi \in L_0^\i(\C, E_S).$ Clearly, $\text{Rat}(\overline E_S) \subset R(\overline E_S, E_S).$ Let $M_a(S)$ be the $L^\i(\mu)$ weak-star closure of $R(\overline E_S, E_S).$ Then
  \[
  \ R^\i(\sigma(S),\mu) \subset M_a(S) \subset M(S).
  \]
  Let $R^\i(\overline E_S, E_S)$ be the $L^\i(\area_{E_S})$ weak-star closure of $R(\overline E_S, E_S).$
  
 In Example \ref{ESExample}, we see that 
\[
\ M_a(S_{W\area}) = M(S_{W\area}) = R^\i(\overline E_{W\area}, E_{W\area}) = \oplus_{n=1}^\i H^\i(\text{int}(A_n)) \supsetneqq R^\i(K, W\area) .
\]

\begin{proposition}\label{MAProp}
For $H\in \mathcal H$ and $G\in \mathcal K \ominus \mathcal H,$ the following properties hold.
\newline
(1) If $\lambda \in  E_S$ such that $\widetilde{\mu_{H,G}}(\lambda) < \i,$ then
\begin{eqnarray}\label{MAPropEq0}	
\ f(\lambda)\mathcal C(\mu_{H,G}) (\lambda) = \mathcal C(f\mu_{H,G}) (\lambda) ~ \text{ for } f\in R(\overline E_S, E_S).
\end{eqnarray}
\newline
(2) If $\lambda \in  E_S$ such that $\widetilde{\mu_{H,G}}(\lambda) < \i$ and $\CT (\mu_{H,G})(\lambda) \ne 0,$  then $\phi_\lambda (f) = \frac{\CT (f\mu_{H,G})(\lambda)}{\CT (\mu_{H,G})(\lambda)}$ is a weak-star continuous homomorphism on $M_a(S)$ and
$\rho_S(f)(\lambda) = \phi_\lambda (f)$ for $f\in R^\i(\sigma(S),\mu).$
Consequently, we extend the definition of the Chaumat's map by setting $\rho_S(f)(\lambda) = \phi_\lambda (f)$ for $\lambda\in E_S$ and $f\in M_a(S).$
\newline
(3) If $f_1,f_2\in M_a(S),$ then $\rho_S(f_1f_2)(z) = \rho_S(f_1)(z)\rho_S(f_2)(z)~\area-a.a..$
\newline
(4) If $f\in M_a(S),$ then $\|\rho_S(f)\|_{L^\i (\area_{E_S})} \le \|f\|_{L^\i (\mu)}.$
\newline
(5) If $f\in M_a(S),$ then $\rho_S(f) \in R^\i(\overline E_S, E_S)$ and
\[
\ \rho_S(f) (z) \CT (\mu_{H,G})(z) = \CT (f\mu_{H,G})(z)~ \area-a.a..
\]
\end{proposition}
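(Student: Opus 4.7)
My plan is to bootstrap everything from the key identity \eqref{KeyIdentity}. For part (1), I would extend \eqref{KeyIdentity} from the generators $\CT(\varphi\area)$ to all of $R(\overline{E_S}, E_S)$. The crucial observation for products is that if $f_2 \in M(S)$, then $H' := f_2(N) H$ lies in $\mathcal H$, so $\mu_{H',G} = f_2 \mu_{H,G}$ and $\widetilde{\mu_{H',G}}(\lambda) \le \|f_2\|_{L^\infty(\mu)}\, \widetilde{\mu_{H,G}}(\lambda) < \infty$. Applying \eqref{KeyIdentity} to the pair $(H',G)$ with $f_1 = \CT(\varphi\area)$ gives $f_1(\lambda)\CT(f_2\mu_{H,G})(\lambda) = \CT(f_1 f_2 \mu_{H,G})(\lambda)$; substituting \eqref{KeyIdentity} for $f_2$ then yields the product identity. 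By induction the identity holds for all polynomial combinations of generators, and passing through the uniform (equivalently $L^\infty(\mu)$-norm) closure is automatic because $1/(w-\lambda)$ belongs to $L^1(|\mu_{H,G}|)$.

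For part (2), observe that $\phi_\lambda(g) = \CT(\mu_{H,G})(\lambda)^{-1} \int g(w)\,\langle H(w), G(w)\rangle\,(w-\lambda)^{-1}\, d\mu(w)$ defines a weak-star continuous linear functional on $L^\infty(\mu)$, since the test function $\langle H, G\rangle/(w-\lambda)$ lies in $L^1(\mu)$ under $\widetilde{\mu_{H,G}}(\lambda) < \infty$. By part (1), $\phi_\lambda(f) = f(\lambda)$ on $R(\overline{E_S}, E_S)$, so $\phi_\lambda$ is multiplicative there. Multiplicativity extends to $M_a(S)$ by the standard two-step trick: fix $f_1 \in R(\overline{E_S}, E_S)$; then $g \mapsto \phi_\lambda(f_1 g) - \phi_\lambda(f_1)\phi_\lambda(g)$ is weak-star continuous on $L^\infty(\mu)$ (multiplication by a fixed $L^\infty(\mu)$ element being weak-star continuous), vanishes on $R(\overline{E_S}, E_S)$, and so vanishes on $M_a(S)$; iterating with $f_2 \in M_a(S)$ in the other slot finishes the job. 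Independence of $\phi_\lambda$ from the choice of $(H, G)$ is inherited from $R(\overline{E_S}, E_S)$ by the same weak-star continuity. Agreement with Chaumat's $\rho_S$ on $R^\infty(\sigma(S),\mu)$ is immediate, since $\langle H, G\rangle \in R^\infty(\sigma(S),\mu)^\perp$ matches the role of $g$ in the definition of $\rho_{K,\mu}$.

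Parts (3), (4), and the pointwise identity in (5) are then quick corollaries. (3) is just pointwise multiplicativity from (2). For (4), note that $M_a(S)$ is a unital commutative Banach subalgebra of $L^\infty(\mu)$ (the weak-star closure of a subalgebra remains a subalgebra, using weak-star continuity of multiplication by fixed elements), on which $\phi_\lambda$ is a nonzero multiplicative linear functional; hence $|\phi_\lambda(f)| \le \|f\|_{L^\infty(\mu)}$ automatically, which gives $\|\rho_S(f)\|_{L^\infty(\area_{E_S})} \le \|f\|_{L^\infty(\mu)}$. The Cauchy-transform identity in (5) is just the defining formula of $\phi_\lambda$ rewritten pointwise.

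For the remaining claim in (5), that $\rho_S(f) \in R^\infty(\overline{E_S}, E_S)$, I would choose a bounded sequence $\{f_n\} \subset R(\overline{E_S}, E_S)$ with $f_n \to f$ weak-star in $L^\infty(\mu)$ (available via separability of $L^1(\mu)$ combined with Banach--Alaoglu metrizability of norm-bounded subsets, plus the Banach/Krein--Smulian fact that $\overline{A \cap B_r}^{w*} = \overline{A}^{w*} \cap B_r$ for a subspace $A$). Weak-star continuity of $\phi_\lambda$ immediately yields $\rho_S(f_n)(\lambda) \to \rho_S(f)(\lambda)$ for $\area$-a.a. $\lambda \in E_S$, and by (1) the functions $\rho_S(f_n) = f_n|_{E_S}$ already lie in $R(\overline{E_S}, E_S)$. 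Uniform boundedness together with the dominated convergence theorem upgrades this to weak-star convergence in $L^\infty(\area_{E_S})$, placing $\rho_S(f)$ in the weak-star closure $R^\infty(\overline{E_S}, E_S)$. The main technical hurdle is producing this bounded approximating sequence and cleanly interfacing the two weak-star topologies on $L^\infty(\mu)$ and $L^\infty(\area_{E_S})$; a secondary subtlety in (1) is the coherent interpretation of $R(\overline{E_S}, E_S)$ as a subalgebra of $L^\infty(\mu)$ via the natural continuous extensions of its generators to $\C$, on which the uniform/$L^\infty(\mu)$-norm closure step relies.
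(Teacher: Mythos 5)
Your proposal is correct and follows essentially the same route as the paper: iterating the key identity \eqref{KeyIdentity} via $\CT(\varphi_2\area)H\in\mathcal H$ to get multiplicativity on the generating algebra, reading $\phi_\lambda$ as the weak-star continuous functional induced by $\langle H,G\rangle/(w-\lambda)\in L^1(\mu)$, and passing a norm-bounded weak-star convergent sequence from $R(\overline E_S,E_S)$ through $\phi_\lambda$ to land $\rho_S(f)$ in $R^\i(\overline E_S,E_S)$. The only cosmetic difference is in (4), where you invoke the automatic norm bound for characters on a unital Banach algebra while the paper runs the $f^n$ power trick directly; the two are interchangeable, and the extra details you supply (the two-step multiplicativity extension and the Krein--Smulian extraction of a bounded approximating sequence) are exactly the steps the paper leaves implicit.
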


\begin{proof} (1):
For $\varphi_1,\varphi_2\in L_0^\i(\C,E_S),$ we see that $\CT(\varphi_2\area)H\in \mathcal H$ and $\widetilde{\mu_{\CT(\varphi_2\area)H,G}}(\lambda) = \widetilde{\CT(\varphi_2\area)\mu_{H,G}}(\lambda) < \i.$ Using \eqref{KeyIdentity}, we get 
\[
\begin{aligned}
 \ \CT(\varphi_1\area)(\lambda)\CT(\varphi_2\area)(\lambda)\mathcal C(\mu_{H,G}) (\lambda) = & \CT(\varphi_1\area)(\lambda)\mathcal C(\mu_{\CT(\varphi_2\area)H,G}) (\lambda) \\
 \  = & \mathcal C(\CT(\varphi_1\area)\CT(\varphi_2\area)\mu_{H,G}) (\lambda).
 \end{aligned}
 \]
 This proves \eqref{MAPropEq0}.
 
(2): By definition, $\phi_\lambda$ is a weak-star continuous. For $f, f_1,f_2\in R(\overline E_S, E_S),$ using \eqref{MAPropEq0}, we have 
\[
\ \phi_\lambda(f) = f(\lambda)\text{ and }\phi_\lambda(f_1f_2) = (f_1f_2)(\lambda) = \phi_\lambda(f_1)\phi_\lambda(f_2).  
\] 
Thus, $\phi_\lambda$ is homomorphism. For $r \in \text{Rat}(\sigma(S)),$ we have $\phi_\lambda(r) = r(\lambda) = \rho_S(r).$ So $\rho_S(f)(\lambda) = \phi_\lambda (f)$ for $f\in R^\i(\sigma(S),\mu).$

(3): Trivial from (2).

(4): For $f\in M_a(S)$ and $\lambda \in  E_S$ such that $\widetilde{\mu_{H,G}}(\lambda) < \i$ and $\CT (\mu_{H,G})(\lambda) \ne 0,$  from (2) , we have
\[
\ |(\rho_S(f)(\lambda))^n| = \left | \int f^n \dfrac{d\mu_{H,G}/(z-\lambda)}{\CT (\mu_{H,G})(\lambda)} \right |\le \|f\|_{L^\i (\mu)}^n \dfrac{\widetilde{\mu_{H,G}}(\lambda)}{|\CT (\mu_{H,G})(\lambda)|}.  
\]
Hence,
\[
\ |\rho_S(f)(\lambda)| \le \|f\|_{L^\i (\mu)}\left ( \dfrac{\widetilde{\mu_{H,G}}(\lambda)}{|\CT (\mu_{H,G})(\lambda)|} \right )^{\frac 1n} \rightarrow \|f\|_{L^\i (\mu)} \text{ as }n \rightarrow \i.  
\]
(4) is proved.

(5) Let $f_n\in R(\overline E_S, E_S)$ such that $f_n\rightarrow f\in M_a(S)$ in $L^\i(\mu)$ weak-star topology. From (4) and (2), we get $\sup_n \|f_n\|_{L^\i(\area_{E_S})} \le \sup_n \|f_n\|_{L^\i(\mu)} < \i$ and $f_n(\lambda) \rightarrow \rho_S(f)(\lambda)$ for $\lambda\in E_S.$ Hence, $\rho_S(f_n) \rightarrow \rho_S(f)$ in $L^\i(\area_{E_S})$ in weak-star topology, which implies $\rho_S(f) \in R^\i(\overline E_S, E_S).$ (5) follows.
\end{proof}

\begin{theorem}\label{MAEqualREE}
 The Chaumat's map $\rho_S$ extends an isometric isomorphism and a weak-star homeomorphism  from $M_a(S)$ onto $R^\i(\overline E_S, E_S).$ 
 \end{theorem}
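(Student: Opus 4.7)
The plan is to establish four properties of $\rho_S$ in succession: weak-star continuity, injectivity, a norm-controlled lifting yielding surjectivity and isometry, and weak-star continuity of the inverse.

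For weak-star continuity of $\rho_S:M_a(S)\to R^\infty(\overline{E_S},E_S)$, Proposition \ref{MAProp}(2) supplies an integral representation: for $\lambda\in E_S$ together with $H,G$ satisfying $\widetilde{\mu_{H,G}}(\lambda)<\infty$ and $\CT\mu_{H,G}(\lambda)\ne 0$,
\[
\rho_S(f)(\lambda) = \int f(z)\,\frac{\langle H(z),G(z)\rangle}{(z-\lambda)\CT\mu_{H,G}(\lambda)}\,d\mu(z),
\]
and the kernel lies in $L^1(\mu)$. Hence any bounded weak-star convergent net $f_\alpha\to f$ in $L^\infty(\mu)$ forces $\rho_S(f_\alpha)(\lambda)\to\rho_S(f)(\lambda)$ pointwise for $\area$-a.e.\ $\lambda\in E_S$, and bounded convergence lifts this to weak-star convergence in $L^\infty(\area_{E_S})$.

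Injectivity follows from Cauchy-transform rigidity. If $\rho_S(f)=0$, Proposition \ref{MAProp}(5) gives $\CT(f\mu_{H,G})\equiv 0$ $\area$-a.e.\ on $\C$; injectivity of the Cauchy transform on finite complex measures then yields $f\langle H,G\rangle=0$ $\mu$-a.e.\ for every $H\in\mathcal H$, $G\in\mathcal K\ominus\mathcal H$. Running over countable dense $\{H_k\}$, $\{G_j\}$ and invoking purity of $S$ through Proposition \ref{SMDepProp} (which identifies $\Delta_1$ with $\text{spt}(\mu)$ modulo $\mu$-null when $S$ is pure) gives $f=0$ in $L^\infty(\mu)$.

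The crucial step is norm-controlled surjectivity. Given $g\in R^\infty(\overline{E_S},E_S)$ with $\|g\|_{L^\infty(\area_{E_S})}=M$, a standard convexity-plus-Banach-Alaoglu argument produces a net $g_\alpha\in R(\overline{E_S},E_S)$ with $\|g_\alpha\|_{C(\overline{E_S})}\le M$ (the $C(\overline{E_S})$ and $L^\infty(\area_{E_S})$ norms agree on continuous functions by Proposition \ref{ESProp}(c)) and $g_\alpha\to g$ weak-star. The decisive structural input, which must be established separately, is the containment $\text{spt}(\mu)\subset\overline{E_S}$ modulo $\mu$-null sets: for $\lambda\in\text{spt}(\mu)$ purity supplies $H,G$ with $\mu_{H,G}$ charging every neighborhood of $\lambda$, and the nontriviality of $\CT\mu_{H,G}$ (by Cauchy injectivity) together with its analyticity off the support force every neighborhood of $\lambda$ to contain points of $E_S$. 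With this containment, $\|g_\alpha\|_{L^\infty(\mu)}\le\|g_\alpha\|_{C(\overline{E_S})}\le M$ by continuity, so Alaoglu's theorem delivers a weak-star cluster point $f\in M_a(S)$ with $\|f\|_{L^\infty(\mu)}\le M$; the first step forces $\rho_S(f)=g$, and Proposition \ref{MAProp}(4) upgrades the inequality to the isometry $\|f\|_{L^\infty(\mu)}=\|g\|_{L^\infty(\area_{E_S})}$. Weak-star continuity of $\rho_S^{-1}$ is then a routine compactness argument: $g_\alpha\to g$ weak-star lifts via isometry to a bounded net $f_\alpha=\rho_S^{-1}(g_\alpha)$ in $L^\infty(\mu)$, and every weak-star subnet limit equals $\rho_S^{-1}(g)$ by the first step and injectivity. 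The main obstacle throughout is the containment $\text{spt}(\mu)\subset\overline{E_S}$; its failure would allow the lifted net to be unbounded in $L^\infty(\mu)$ and the surjectivity scheme would collapse.
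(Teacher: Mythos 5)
Your injectivity argument (Cauchy-transform rigidity applied to $\CT(f\mu_{H,G})=\rho_S(f)\,\CT(\mu_{H,G})$, then purity) is sound, and your identification of the containment $\text{spt}\,\mu\subset\overline{E_S}$ as a structural input for the lifting is correct --- it does follow from purity, since a disk $D$ with $\mu(D)>0$ and $D\cap E_S=\emptyset$ would force $\mu_{H,G}|_D=0$ for all $H,G$ and hence a normal summand. The genuine gap is in the surjectivity step, where you assert that ``a standard convexity-plus-Banach-Alaoglu argument'' produces a net $g_\alpha\in R(\overline{E_S},E_S)$ with $\|g_\alpha\|_{C(\overline{E_S})}\le M$ converging weak-star to $g$. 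For a weak-star dense subspace $W$ of a dual space, the $M$-ball of $W$ need \emph{not} be weak-star dense in the $M$-ball of $\overline{W}^{w*}$: the bipolar of the $M$-ball of $W$ is $\{g:\ |\langle g,x\rangle|\le M\,\|x|_{W}\|\ \text{for all }x\}$, and $\|x|_{W}\|$ can be strictly smaller than $\|x|_{\overline{W}^{w*}}\|$. In this function-algebra setting your claim is precisely a pointwise bounded density statement (of Farrell--Rubel--Shields/Davie type), which is a substantive theorem where it holds, not soft functional analysis. Without it your lifted net can be unbounded in $L^\infty(\mu)$ and the Alaoglu cluster point need not exist; so you have located the main obstacle in the wrong place. (A smaller issue: pointwise a.e.\ convergence of a bounded \emph{net} does not give dominated convergence; you need to work with sequences via separability of the predual and then invoke Krein--Smulian, as the paper does.)

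The paper's route sidesteps the norm-controlled lifting entirely: it obtains injectivity and surjectivity from Proposition \ref{MAProp} (5) together with purity, boundedness of $\rho_S$ from Proposition \ref{MAProp} (4), boundedness of $\rho_S^{-1}$ from the bounded inverse theorem, and the weak-star homeomorphism from weak-star sequential continuity plus Krein--Smulian. The isometry then comes for free from multiplicativity: since $\|f^n\|=\|f\|^n$ in both $L^\infty(\mu)$ and $L^\infty(\area_{E_S})$, a bounded algebra isomorphism with bounded inverse between these algebras satisfies $\|\rho_S(f)\|=\lim_n\|\rho_S(f^n)\|^{1/n}\le\lim_n C^{1/n}\|f\|=\|f\|$ and conversely. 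If you wish to keep your constructive approach you must either prove the bounded pointwise approximation property for $R(\overline{E_S},E_S)$ or replace the norm-controlled lifting by this open-mapping-plus-power-trick argument.
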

 
 \begin{proof}
 It is clear that, from Proposition \ref{MAProp} (5), the map $\rho_S$ is surjective and injective since $S$ is pure. By Proposition \ref{MAProp} (4), $\rho_S$ is bounded linear operator from $M_a(S)$ onto $R^\i(\overline E_S, E_S).$ 
 Using the bounded inverse theorem, we infer that $\rho_S^{-1}$ is bounded.
 Therefore, $\rho_S$ is bijective      
 isomorphism between two Banach algebras $M_a(S)$ onto $R^\i(\overline E_S, E_S).$ Clearly, $\rho_S$ is also a weak-star sequentially continuous, so an application of Krein-Smulian Theorem shows that $\rho_S$ is a weak-star homeomorphism.	
 \end{proof}

\subsection{Invertibility in $M_a(S)$ and Proof of Theorem \ref{MTheorem}} 
We start with the following proposition.

\begin{proposition}\label{OpInvertibleForS}
Let $S\in \mathcal L(\mathcal H)$ be a pure subnormal operator and let $\mu$ be the scalar-valued spectral measure for $N=mne(S).$
If $f\in M_a(S)$ and $M_{S,f}$ is invertible, then  there exists $\epsilon_f > 0$ such that
 \begin{eqnarray}\label{OpInvertibleForSEq1} 
 \ |\rho_S(f)(z) | \ge \epsilon_f, ~\area_{E_S}-a.a.. 
 \end{eqnarray}
 \end{proposition}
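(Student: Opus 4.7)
The plan is to reduce the proposition to an eigenvector computation on the adjoint side. Writing $T := M_{S,f}^{-1}$, invertibility of $M_{S,f}$ gives $(M_{S,f}^*)^{-1} = T^*$ with $\|T^*\| = \|T\|$, so $\|M_{S,f}^* v\| \ge \|T\|^{-1}\|v\|$ for every $v \in \mathcal{H}$. It therefore suffices to show that for $\area$-a.e. $\lambda_0 \in E_S$ there is a nonzero $k_{\lambda_0} \in \mathcal{H}$ with $M_{S,f}^* k_{\lambda_0} = \overline{\rho_S(f)(\lambda_0)}\,k_{\lambda_0}$, whence $|\rho_S(f)(\lambda_0)| \ge \|T\|^{-1}$ and one may take $\epsilon_f := \|T\|^{-1}$.

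For a $G \in \mathcal{K} \ominus \mathcal{H}$ with $G_{\lambda_0}(w) := G(w)/\overline{(w-\lambda_0)}$ in $\mathcal{K}$, the natural candidate is $k_{\lambda_0}^G := P_{\mathcal{H}} G_{\lambda_0} \in \mathcal{H}$. For every $H \in \mathcal{H}$,
\[
(H, k_{\lambda_0}^G)_{\mathcal{H}} = (H, G_{\lambda_0})_{\mathcal{K}} = \int \frac{\langle H(w), G(w)\rangle}{w-\lambda_0}\,d\mu(w) = \mathcal{C}\mu_{H,G}(\lambda_0),
\]
and using $\mu_{M_{S,f} H, G} = f\,\mu_{H,G}$ together with Proposition \ref{MAProp}(5),
\[
(M_{S,f} H, k_{\lambda_0}^G) = \mathcal{C}(f\mu_{H,G})(\lambda_0) = \rho_S(f)(\lambda_0)\,\mathcal{C}\mu_{H,G}(\lambda_0) = \rho_S(f)(\lambda_0)(H, k_{\lambda_0}^G).
\]
Hence $M_{S,f}^* k_{\lambda_0}^G = \overline{\rho_S(f)(\lambda_0)}\,k_{\lambda_0}^G$, and it is the required eigenvector provided $k_{\lambda_0}^G \neq 0$, i.e., provided $\mathcal{C}\mu_{H,G}(\lambda_0) \neq 0$ for some $H \in \mathcal{H}$.

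The hardest step will be to verify that for $\area$-a.e. $\lambda_0 \in E_S$ there is a $G \in \mathcal{K} \ominus \mathcal{H}$ meeting both the integrability requirement $G_{\lambda_0} \in \mathcal{K}$ and the nonvanishing requirement $k_{\lambda_0}^G \neq 0$. By Proposition \ref{ESProp}(b), $E_S$ coincides $\area$-a.e. with $\bigcup_{k,j} E_{H_k,G_j}$ for norm-dense sequences $\{H_k\} \subset \mathcal{H}$ and $\{G_j\} \subset \mathcal{K} \ominus \mathcal{H}$; for $\lambda_0 \in E_{H_k,G_j}$ the Cauchy transform $\mathcal{C}\mu_{H_k,G_j}(\lambda_0)$ is well-defined and nonzero, but $G_{j,\lambda_0}$ need not lie in $\mathcal{K}$ when $\lambda_0$ sits in the support of $\mu$. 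I plan to circumvent this by a cutoff-and-project approximation $G_j^{(\delta)} := P_{\mathcal{K} \ominus \mathcal{H}}\bigl(G_j\,\chi_{|w-\lambda_0|\ge \delta}\bigr)$, whose leading piece $G_j\,\chi_{|w-\lambda_0|\ge\delta}/\overline{(w-\lambda_0)}$ is bounded; a dominated convergence argument based on $\widetilde{\mu_{H_k,G_j}}(\lambda_0) < \infty$ should then yield $\mathcal{C}\mu_{H_k, G_j^{(\delta)}}(\lambda_0) \to \mathcal{C}\mu_{H_k,G_j}(\lambda_0) \neq 0$ as $\delta \to 0^+$. The delicate issue, which I expect to be the main obstacle, is to control the projection tail $P_{\mathcal{H}}\bigl(G_j\,\chi_{|w-\lambda_0|<\delta}\bigr)$ finely enough near $\lambda_0$ to ensure $G_j^{(\delta)}/\overline{(w-\lambda_0)} \in \mathcal{K}$; this should exploit the specific way $\mathcal{H}$ sits inside $\mathcal{K}$ through the algebra $M_a(S)$ and the extended Chaumat map $\rho_S$ on it.
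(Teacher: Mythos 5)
Your reduction to the adjoint eigenvector equation is formally correct as far as it goes: if a nonzero $k_{\lambda_0}\in\mathcal H$ with $M_{S,f}^*k_{\lambda_0}=\overline{\rho_S(f)(\lambda_0)}\,k_{\lambda_0}$ existed for $\area$-a.e.\ $\lambda_0\in E_S$, the bound $|\rho_S(f)(\lambda_0)|\ge\|T\|^{-1}$ would follow. But the existence of that eigenvector is exactly the point at which the argument breaks, and you have correctly flagged it without resolving it. The vector $k_{\lambda_0}^G=P_{\mathcal H}(G/\overline{(w-\lambda_0)})$ exists only when $\int\langle G,G\rangle|w-\lambda_0|^{-2}\,d\mu<\infty$, which is much stronger than the defining condition $\widetilde{\mu_{H,G}}(\lambda_0)<\infty$ of $E_{H,G}$ and typically fails when $\lambda_0\in\operatorname{spt}\mu$. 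Equivalently, you need the functional $H\mapsto\CT\mu_{H,G}(\lambda_0)$ to be \emph{bounded} on $\mathcal H$, i.e.\ $\lambda_0$ to be a bounded point evaluation in a vector-valued sense; membership in $E_S$ gives only convergence of one Cauchy transform for one pair $(H,G)$, not uniformity in $H$. Your cutoff-and-project repair does not close this: writing $G^{(\delta)}=G\chi_{|w-\lambda_0|\ge\delta}+P_{\mathcal H}(G\chi_{|w-\lambda_0|<\delta})$, the first term divided by $\overline{(w-\lambda_0)}$ is harmless, but the projection tail $P_{\mathcal H}(G\chi_{|w-\lambda_0|<\delta})$ has no reason to vanish at $\lambda_0$ to the order needed for $G^{(\delta)}/\overline{(w-\lambda_0)}\in\mathcal K$; establishing that for a.e.\ $\lambda_0\in E_S$ would amount to proving a bounded-point-evaluation theorem substantially stronger than the proposition itself.

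The paper avoids the pointwise eigenvector entirely. From $M_{S,f}T=I$ one gets $\mu_{H,G}=f\,\mu_{TH,G}$, hence by Proposition \ref{MAProp}(5) the $\area$-a.e.\ identity $\CT\mu_{H,G}=(\rho_S(f))^n\,\CT\mu_{T^nH,G}$ for every $n$. Integrating $|\CT\mu_{T^nH,G}|$ over $E_S$ with the standard estimate $\int_E|\CT\nu|\,d\area\lesssim\sqrt{\area(E)}\,\|\nu\|$ and $\|\mu_{T^nH,G}\|\le\|T\|^n\|H\|\|G\|$, then taking $n$-th roots and letting $n\to\infty$, forces $|\rho_S(f)|\ge\|T\|^{-1}$ a.e.\ on $\{\CT\mu_{H,G}\ne0\}\supset E_{H,G}$, and Proposition \ref{ESProp}(b) finishes by a countable union. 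The only input is the almost-everywhere multiplicativity of $\rho_S$, not any pointwise boundedness. If you want to salvage your approach, you would essentially need to replace the exact eigenvector by this kind of iterated a.e.\ identity; as written, the proposal has a genuine gap at its central step.
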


\begin{proof} 
Let $T\in \mathcal L (\mathcal H)$ such that $M_{S,f}T = I.$ Then for $H\in \mathcal H$ and $G\in \mathcal K \ominus \mathcal H,$ from Proposition \ref{MAProp} (5), we have
 \[
 \ (\rho_S(f)(z))^n\mathcal C(\mu_{T^nH, G})(z) = \mathcal C(\mu_{H, G})(z), ~ \area_{E_S}-a.a..
 \]
 Therefore, 
 \[
 \begin{aligned}
 \ & \left (\int_{\{\mathcal C(\mu_{H, G})(z) \ne 0\}}\dfrac{1}{|\rho_S(f)(z)|^n}\mathcal C(\mu_{H, G})(z)| d \area (z)\right )^\frac{1}{n} \\
 \  \le & \left (C_1\sqrt{\area(E_S)}\|H\| \|G\|\right )^\frac{1}{n} \|T\|,
 \end{aligned}
 \] 
where $C_1$ is an absolute constant. Taking $n\rightarrow \i$ and using Proposition \ref{ESProp} (b), we prove \eqref{OpInvertibleForSEq1}. 
\end{proof}

For a bounded Borel subset $\mathcal D\subset \mathbb C,$ let 
  $H(\mathcal D)$ denote the set of functions with analytic extensions to open neighborhoods of $\mathcal D$ and let $H^\i (\mathcal D)$ be the weak-star closure in $L^\i(\area_{\mathcal D})$ of $H(\mathcal D).$
  
  If $B \subset\C$ is a compact subset, then  we define the analytic capacity of $B$ by
\[
\ \gamma(B) = \sup |f'(\infty)|,
\]
where the supremum is taken over all those functions $f$ that are analytic in $\mathbb C_{\infty} \setminus B$ ($\mathbb{C}_\infty := \mathbb{C} \cup \{\infty \}$) such that
$|f(z)| \le 1$ for all $z \in \mathbb{C}_\infty \setminus B$; and
$f'(\infty) := \lim _{z \rightarrow \infty} z(f(z) - f(\infty)).$
The analytic capacity of a general subset $E$ of $\mathbb{C}$ is given by: 
$\gamma (E) = \sup \{\gamma (B) : B\subset E \text{ compact}\}.$ 
We use $\gamma-a.a.$ for a property that holds everywhere except possibly on a set of analytic capacity zero.
The following elementary property can be found in \cite[Theorem VIII.2.3]{gamelin},
\begin{eqnarray}\label{AreaGammaEq}
\ \area(E) \le 4\pi \gamma(E)^2,
\end{eqnarray}
 
 X. Tolsa has established the following astounding theorems $\gamma$ (see \cite{Tol03} or \cite[Theorem 6.1]{Tol14}): There exists an absolute constant $C_T > 0$ such that for $E \subset \mathbb{C},$
 \begin{eqnarray} \label{GammaEqualGammaP}
 \ \gamma (E) \le C_T \sup \{\|\eta\|:~ \eta \in M_0^+(E) \text{ and }\|\CT\eta\|_{L^\i(\C)} \le 1\}	
 \end{eqnarray}

 and for $E_1,E_2,...,E_m \subset \mathbb{C}$ ($m$ may be $\i$),
\begin{eqnarray} \label{SAAC}
\ \gamma \left (\bigcup_{i = 1}^m E_i \right ) \le C_T \sum_{i=1}^m \gamma(E_i).
\end{eqnarray}

The following theorem implies Theorem \ref{MTheorem} since $R^\i (\sigma (S), \mu) \subset M_a(S).$ 
 
\begin{theorem} \label{InvInMA}
Let $S\in \mathcal L(\mathcal H)$ be a pure subnormal operator and let $\mu$ be the scalar-valued spectral measure for $N=mne(S).$ Then for $f\in M_a(S),$ the following statements are equivalent:
\newline 
(1) $\rho_S(f)$ is invertible in $R^\i(\overline E_S, E_S).$
\newline 
(2) $f$ is invertible in $M_a(S).$
\newline 
(3) $f$ is invertible in $M(S).$
\newline 
(4) There exists $\epsilon_f > 0$ such that 
\[
\ |\rho_S(f)(z)| \ge \epsilon_f,~ \area_{E_S}-a.a..
\] 
\end{theorem}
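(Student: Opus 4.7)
The overall plan is to establish the implications $(1) \Leftrightarrow (2)$, $(2) \Rightarrow (3)$, $(3) \Rightarrow (4)$, and $(4) \Rightarrow (1)$. Three of these are essentially free from results already in place, so the substantive content lies in the last one. The equivalence $(1) \Leftrightarrow (2)$ is immediate from Theorem \ref{MAEqualREE}: since $\rho_S$ is an isometric algebra isomorphism of $M_a(S)$ onto $R^\i(\overline{E_S}, E_S)$, invertibility in one algebra corresponds precisely to invertibility in the other, with $\rho_S(f^{-1}) = \rho_S(f)^{-1}$. The implication $(2) \Rightarrow (3)$ is the trivial inclusion $M_a(S) \subset M(S)$. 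For $(3) \Rightarrow (4)$, if $f$ has an inverse $g$ in $M(S)$, then $M_{S,f} M_{S,g} = M_{S,g} M_{S,f} = I$, so $M_{S,f}$ is invertible on $\mathcal H$; since the standing hypothesis keeps $f \in M_a(S)$, Proposition \ref{OpInvertibleForS} delivers the pointwise lower bound $|\rho_S(f)| \ge \epsilon_f$ on $E_S$.

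The substantive step is $(4) \Rightarrow (1)$. Given $|\rho_S(f)(z)| \ge \epsilon_f$, $\area_{E_S}$-a.a., the candidate inverse in $R^\i(\overline{E_S}, E_S)$ is the bounded function $g := 1/\rho_S(f)$, which lies a priori in $L^\i(\area_{E_S})$ with $\|g\|_\i \le 1/\epsilon_f$. The task is to exhibit a net in the generating subalgebra $R(\overline{E_S}, E_S)$ that converges to $g$ in the $L^\i(\area_{E_S})$ weak-star topology. I would adapt the scheme from \cite{y23} that established Theorem \ref{InvTheoremY23}: combine a Vitushkin-type localization of $g$ with Tolsa's theorems \eqref{GammaEqualGammaP} and \eqref{SAAC}, reducing the problem to producing localized approximants from Cauchy potentials $\CT(\varphi\area)$ with $\varphi \in L_0^\i(\C, E_S)$. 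The crucial bridge between the hypothesis $|\rho_S(f)| \ge \epsilon_f$ and the Cauchy-transform estimates feeding Tolsa's machinery is the identity $\rho_S(f)(z) \CT \mu_{H,G}(z) = \CT(f \mu_{H,G})(z)$ on $E_S$ from Proposition \ref{MAProp}(5): together with the area-density-one property of $E_S$ in itself (Proposition \ref{ESProp}(c)), it converts the lower bound on $|\rho_S(f)|$ into the quantitative control on Cauchy transforms of annihilating measures that the capacity arguments require.

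The main obstacle will be that $R^\i(\overline{E_S}, E_S)$ is not, in general, an algebra of the form $R^\i(K, \nu)$ for any compact $K$ and finite measure $\nu$: its generators are Cauchy potentials of compactly supported $L^\i$ densities vanishing off $E_S$, rather than rational functions with poles off some compact set. Accordingly, Theorem \ref{InvTheoremY23} cannot be invoked verbatim, and one must re-run the rational-approximation machinery of \cite{y23} in this enlarged generating family. Example \ref{ESExample} further warns that $E_S$ may differ from $E_S^\i$ on a set of positive area, so the underlying geometry is strictly more delicate than that of $R^\i(\sigma(S), \mu)$. On the positive side, Cauchy potentials provide a far richer toolkit than rational functions, which should streamline the construction of local approximants once the capacity framework is in place; the delicate bookkeeping lies in verifying that each localized piece remains in $R(\overline{E_S}, E_S)$ and that the resulting sum converges weak-star with a uniform norm bound, so as to supply the desired $h \in M_a(S)$ with $\rho_S(h) = g$ and $fh = 1$.
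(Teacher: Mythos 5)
Your proposal is correct and follows essentially the same route as the paper: the same cycle $(1)\Rightarrow(2)\Rightarrow(3)\Rightarrow(4)\Rightarrow(1)$, with the first three implications read off from Theorem \ref{MAEqualREE}, the inclusion $M_a(S)\subset M(S)$, and Proposition \ref{OpInvertibleForS}, and the substantive step $(4)\Rightarrow(1)$ carried out by adapting the Vitushkin--Tolsa machinery of \cite{y23} via the identity in Proposition \ref{MAProp}(5). The paper's execution of that last step proceeds by introducing the envelope $E$ of $R^\i(\overline E_S,E_S)$, showing $E\approx E_S$ up to area-null sets, and identifying the algebra with $H^\i(\mathcal R_S)$ for the removable set $\mathcal R_S$ --- exactly the kind of re-running of the generating-family argument you anticipate.
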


\begin{proof}
(1)$\Rightarrow (2)$ follows from Theorem \ref{MAEqualREE}. 

(2)$\Rightarrow (3)$ is trivial.	

(3)$\Rightarrow (4)$ follows from Proposition \ref{OpInvertibleForS}.

(4)$\Rightarrow (1):$ Let $E$ be the envelope for $R^\i(\overline E_S, E_S),$ the set of points $\lambda \in \C$ such that there exists $g\in R^\i(\overline E_S, E_S)^\perp$ satisfying $\widetilde{g\area_{E_S}}(\lambda) < \i$ and $\CT(g\area_{E_S})(\lambda) \ne 0.$

As shown in \cite[Proposition 5.7]{y23}, we conclude that there exists a sequence $\{g_n\} \subset R^\i(\overline E_S,E_S)^\perp$ and there exists a
doubly indexed  sequence of open balls $\{\D (\lambda_{ij}, \delta_{ij})\}$ such that
\begin{eqnarray}\label{EEqn}
\ E \approx  \bigcup_{n=1}^\infty E_0(g_n\area_E) \approx \bigcap_{j=1}^\infty \bigcup_{i=1}^\infty \D (\lambda_{ij}, \delta_{ij}), ~ \area-a.a.,
\end{eqnarray}
where $E_0(g_n\area_E) = \{\widetilde{g_n\area_E}(z) < \i\text{ and }\CT(g_n\area_E)(z) \ne 0\}.$ We may assume that $\{g_n\}$ is $L^1(\area_{E_S})$ norm dense in  $R^\i (\overline E_S,E_S)^\perp.$
Thus, Proposition \ref{ESProp} holds for $E.$

Using Proposition \ref{MAProp} (2) and Theorem \ref{MAEqualREE}, we infer that for $\lambda\in E_S,$ the evaluation $\psi_\lambda (f) = f(\lambda)$ for $f\in R(\overline E_S, E_S)$ extends a weak-star continuous homomorphism on $R^\i(\overline E_S, E_S).$ Hence, $E_S \subset E.$ Clearly, if $g\in R^\i(\overline E_S, E_S)^\perp,$ then $\CT(g\area_{E_S})(z) = 0 ~\area_{\C\setminus E_S}-a.a.,$ which implies $\area(E\setminus E_S)=0$ and $\overline E_S = \overline E$ by Proposition \ref{ESProp} (c). Thus, 
\[
\ R^\i(\overline E_S, E_S) = R^\i(\overline E, E).
\]

Now the proof of \cite[Theorem 1.1]{y23} will apply with slight modifications below.
\newline
(1) We replace $R^\i(\overline{E}, \area_E)$ in \cite{y23} by $R^\i(\overline{E}, E).$  Define the removable set
\begin{eqnarray}\label{RSDef}
\ \mathcal R_S = \bigcup_{n = 1}^\i \{z:~\lim_{\epsilon \rightarrow 0} \mathcal C_\epsilon(g_n\area_E)(z)\text{ exists, } \lim_{\epsilon \rightarrow 0} \mathcal C_\epsilon(g_n\area_E)(z) \ne 0\},
\end{eqnarray}
where $\mathcal C_\epsilon(\nu)(z)= \int_{|w-z| >\epsilon} \frac{1}{w-z}d\nu(w)$ for $\nu\in M_0(\C).$
Using the Fubini's theorem and \eqref{EEqn}, we get $E\approx \mathcal R_S~\area-a.a..$
\newline
(2) The proofs of \cite[Theorem 6.1]{y23} (2)$\Rightarrow$(3) and (3)$\Rightarrow$(1) are valid for $R^\i(\overline E,E).$ Clearly, if $g\in H^\i (\mathcal R_S)^\perp,$ then $\CT(g\area_E)(z) = 0 ~\area_E = \area_{\mathcal R_S}-a.a.,$ which implies $R^\i(\overline E,E) \subset H^\i (\mathcal R_S).$  Therefore,
\begin{eqnarray}\label{REEEqualsHIR}
\ R^\i(\overline E,E) = H^\i (\mathcal R_S).
\end{eqnarray}

Slight modifications of \cite[Theorem 6.1]{y23} (1)$\Rightarrow$(2) are needed: Using \eqref{REEEqualsHIR}, we assume that $f_n$ is bounded and analytic in an open neighborhood of $\mathcal R_S$ (replacing $f_n \in R(\overline E)$). Using the same proof \cite[Theorem 6.1]{y23} (3)$\Rightarrow$(1) for $f.$ We see that
\begin{eqnarray}\label{IntEstimateEq}
\ \left | \int f_n(z) \dfrac{\partial \varphi (z)}{\partial  \bar z} d\area_E(z) \right | \lesssim \delta \left \|\dfrac{\partial \varphi (z)}{\partial  \bar z} \right \|_\infty \gamma (\D(\lambda, \delta) \setminus \mathcal R_S). 
\end{eqnarray}
Taking $n\rightarrow \i$ and replacing $\varphi$ by $(z-\lambda)^k\varphi,$ we get \cite[Theorem 6.1]{y23} (2).
 \newline
 (3) In \cite[Lemma 7.2]{y23}, replace $R(K_{\epsilon, N})$ by $H(\mathcal R _{\epsilon, N}),$ where $\mathcal R _{\epsilon, N} =  \mathcal R  \setminus (A_\epsilon\cup \mathcal E_N).$ In its proof, replace $\gamma(\D(\lambda, \delta)\setminus K_{\epsilon, N})$ by $\gamma(\D(\lambda, \delta)\setminus \mathcal R_{\epsilon, N})$ (similar estimate as in \eqref{IntEstimateEq}).
 \newline
 (4) In \cite[Lemma 7.3]{y23} and its proof, replace $\text{Rat}(\overline E)$ by $H(\mathcal R_S).$
 \newline
 (5) In the proof of \cite[Theorem 1.1]{y23}, replace $R(K_{\epsilon, N})$ by $H(\mathcal R _{\epsilon, N}).$
\end{proof}

\section{\textbf{Invertibility of Multiplication Operators}}

We continue to use the assumptions and notation in last section.
The Fredholm index of $T \in \mathcal L(\mathcal H),$ where $0 \notin \sigma_e(T),$ is defined
\[
\ ind(T) = \dim(ker(T)) - \dim(ker(T^*)). 
\]
The objective of this section is to prove the following theorem.

\begin{theorem} \label{theoremA}
Let $S\in \mathcal L(\mathcal H)$ be a pure subnormal operator and let $\mu$ be the scalar-valued spectral measure for $N=mne(S).$ Then for $f\in M_a(S),$ the following statements hold: 
\newline
(1) $M_{S,f}$ is invertible if and only if $f$ is invertible in $M_a(S)$ if and only if $f$ is invertible in $M(S).$
\newline
(2) $M_{S,f}$ is Fredholm if and only if $f = p f_0$, where $p$ is a polynomial with only zeros $z_1,z_2,...,z_m\in \sigma(S)\setminus \sigma_e(S)$, multiplicity of $z_i$ is $n_i$,  $f_0\in M_a(S),$ and $f_0$ is invertible in $M_a(S).$
In this case,
 \[
 \ \text{ind}(M_{S,f}) = \sum_{i=1}^m n_i \text{ind}(S-z_i).
 \] 
\end{theorem}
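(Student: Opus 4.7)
Theorem \ref{InvInMA} already equates the three conditions $|\rho_S(f)|\ge\epsilon_f,~\area_{E_S}-a.a.$, $f$ invertible in $M_a(S)$, and $f$ invertible in $M(S)$. To add $M_{S,f}$ invertible to the list: if $f$ has an inverse $g\in M(S)$, then since $h\mapsto h(N)|_{\mathcal H}$ is a homomorphism on $M(S)$, the operator $M_{S,g}$ is a two-sided inverse of $M_{S,f}$; conversely, if $M_{S,f}$ is invertible, Proposition \ref{OpInvertibleForS} delivers the required lower bound on $\rho_S(f)$, closing the loop back to Theorem \ref{InvInMA}.

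\textbf{Part (2), sufficiency.} With $f=pf_0$, I write $M_{S,f}=p(S)M_{S,f_0}$ using that $h\mapsto h(N)|_{\mathcal H}$ is multiplicative on $M(S)$. Part (1) makes $M_{S,f_0}$ invertible, and each $S-z_i$ is Fredholm since $z_i\notin\sigma_e(S)$, so $p(S)=\prod_{i=1}^m(S-z_i)^{n_i}$ is Fredholm with $\text{ind}(p(S))=\sum_i n_i\,\text{ind}(S-z_i)$. Composition with the invertible operator $M_{S,f_0}$ preserves both Fredholmness and index, giving the claimed formula.

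\textbf{Part (2), necessity.} Assume $M_{S,f}$ is Fredholm. By the proof of Theorem \ref{InvInMA}, $R^\infty(\overline{E_S},E_S)=H^\infty(\mathcal R_S)$, so I regard $\rho_S(f)$ as a bounded analytic function on $\mathcal R_S$. Applying the analytic Fredholm theorem to the affine family $\lambda\mapsto M_{S,f-\lambda}=M_{S,f}-\lambda I$, every member is Fredholm on a neighborhood of $0$, hence the set of $\lambda$ there at which $M_{S,f-\lambda}$ fails to be invertible is discrete; for all other such $\lambda$, part (1) gives $|\rho_S(f)-\lambda|\ge\epsilon_\lambda,~\area_{E_S}-a.a.$, so the essential range of $\rho_S(f)$ on $E_S$ omits a dense subset of a punctured neighborhood of $0$. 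Combined with the open mapping theorem on each component of $\mathcal R_S$, this forces $\rho_S(f)$ to have only isolated zeros in $\mathcal R_S$: identical vanishing on a component would pull back via Theorem \ref{MAEqualREE} to a nonzero idempotent $e\in M_a(S)$ with $fe=0$, and iterating with $S$ on the range of $M_{S,e}$ would produce an infinite-dimensional subspace of $\ker M_{S,f}$. Finite-dimensionality of $\ker M_{S,f}$ and $\ker M_{S,f}^*$ then caps the total vanishing order, yielding zeros $z_1,\dots,z_m$ with multiplicities $n_i$; each $z_i$ must lie in $\sigma(S)\setminus\sigma_e(S)$, since an accumulation of zeros at a point of $\sigma_e(S)$ would build, via the Cauchy transform identity in Proposition \ref{MAProp} (5), nearly orthogonal approximate null vectors of $M_{S,f}$ that contradict Fredholmness. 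Setting $p(z)=\prod_i(z-z_i)^{n_i}$, the quotient $\rho_S(f)/p$ is a nonvanishing bounded element of $H^\infty(\mathcal R_S)$, so $f_0:=\rho_S^{-1}(\rho_S(f)/p)\in M_a(S)$ satisfies $f=pf_0$ with $|\rho_S(f_0)|\ge\epsilon>0,~\area_{E_S}-a.a.$; hence $f_0$ is invertible in $M_a(S)$ by Theorem \ref{InvInMA}, and the index formula is inherited from the sufficiency direction.

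\textbf{Main obstacle.} The hardest point is translating the operator-theoretic Fredholm property of $M_{S,f}$ into the function-theoretic assertion that the zeros of $\rho_S(f)$ in $\mathcal R_S$ are finitely many and none of them accumulate on $\sigma_e(S)$. Carrying this out rigorously requires marrying the analytic Fredholm theory for $M_{S,f}-\lambda I$ with sharp Cauchy transform estimates near $\sigma_e(S)$ in the spirit of Proposition \ref{MAProp}, and exploiting pure subnormality to convert each would-be accumulation of zeros, or an identically vanishing component, into a concrete infinite-dimensional obstruction inside $\ker M_{S,f}$.
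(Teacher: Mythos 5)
Your Part (1) and the sufficiency half of Part (2) are fine and match the paper. The necessity half of Part (2), however, has a genuine gap at its very first step, and this error propagates through the rest of the argument. You claim that since $M_{S,f-\lambda}=M_{S,f}-\lambda I$ is Fredholm for $\lambda$ in a neighborhood of $0$, the set of such $\lambda$ at which it fails to be invertible is discrete. Fredholmness alone does not give this: the analytic Fredholm alternative requires invertibility at some point of the (connected) parameter domain, and here the index is locally constant and typically nonzero. The unilateral shift with $f(z)=z$ already refutes both this claim and its consequence: $M_{S,f}-\lambda=S-\lambda$ is non-invertible for every $|\lambda|<1$, and $\rho_S(f)(\lambda)=\lambda$ has essential range $\overline{\D}$ on $E_S=\D$, omitting no dense subset of any punctured neighborhood of $0$. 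Two further steps are also unsupported: (i) identical vanishing of $\rho_S(f)$ on a component of $\mathcal R_S$ does not obviously produce an idempotent in $M_a(S)$ (membership of a characteristic function of a component in the weak-star closed algebra is exactly the delicate issue the paper's Section 5 Vitushkin construction is designed to address); and (ii) at the end you pass from \emph{pointwise nonvanishing} of $\rho_S(f)/p$ to the \emph{uniform} lower bound $|\rho_S(f_0)|\ge\epsilon$, $\area_{E_S}$-a.a., which is what Theorem \ref{InvInMA} actually requires — obtaining that bound is the whole content of the theorem, and you essentially assume it.

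The paper takes a different, operator-theoretic route that avoids all of this. Each zero $\lambda_0\in\sigma(S)\setminus\sigma_e(S)$ of $f$ yields, via the division property $\frac{f(z)-f(\lambda_0)}{z-\lambda_0}\in M_a(S)$ and a Cowen--Douglas eigenvector $k_{\lambda_0}$ with $S^*k_{\lambda_0}=\bar\lambda_0 k_{\lambda_0}$, a vector in $\ker M_{S,f}^*$; Fredholmness then caps the number of zeros, giving the factorization $f=pf_0$ with $f_0$ zero-free on $\sigma(S)\setminus\sigma_e(S)$. Purity kills $\ker M_{S,f_0}$ (a finite-dimensional $S$-invariant subspace would be a normal summand). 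Surjectivity of $M_{S,f_0}$ is the crux: if it failed, Lemma \ref{CDExtension} — applied to $T=M_{S,f_0}$ and the commuting operator $S$ — extracts a joint eigenvector $k$ with $S^*k=\bar u_0k$ and $M_{S,f_0}^*k=0$ at some $u_0\in\sigma(S)\setminus\sigma_e(S)$, forcing $f_0(u_0)=0$, a contradiction. Only then does Part (1) convert operator invertibility of $M_{S,f_0}$ into invertibility of $f_0$ in $M_a(S)$. If you want to repair your argument, you should replace the analytic-Fredholm and open-mapping reasoning with this eigenvector-bundle mechanism (Lemmas \ref{CowenDouglas}, \ref{CDExtension}, \ref{RInteriorLemma}).
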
 

As an application to $R^\i(\sigma(S),\mu),$ we get the following corollary.

\begin{corollary} \label{corollaryA}
Let $S\in \mathcal L(\mathcal H)$ be a subnormal operator and let $\mu$ be the scalar-valued spectral measure for $N=mne(S).$ Then for $f \in R^\i(\sigma(S),\mu),$ the following statements hold.
\newline
(1) $M_{S,f}$ is invertible if and only if $f$ is invertible in $M(S).$
\newline
(2) $M_{S,f}$ is Fredholm if and only if $f = p f_0$, where $p$ is a polynomial with only zeros $z_1,z_2,...,z_m\in \sigma(S)\setminus \sigma_e(S)$, multiplicity of $z_i$ is $n_i$, $f_0\in R^\i(\sigma(S),\mu),$ and $f_0$ is invertible in $M(S).$
In this case,
 \[
 \ \text{ind}(M_{S,f}) = \sum_{i=1}^m n_i \text{ind}(S-z_i).
 \] 
\end{corollary}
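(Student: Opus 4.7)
The plan is to deduce Corollary \ref{corollaryA} from Theorem \ref{theoremA} by reducing to the pure case via the decomposition \eqref{MDecompEq}, then combining with standard spectral theory for the normal direct summand. I would invoke Proposition \ref{SMDepProp} to write $S = N_{\Delta_0} \oplus S_{\Delta_1}$ with $S_{\Delta_1}$ pure and $M(S) = L^\i(\mu_{\Delta_0}) \oplus M(S_{\Delta_1}).$ Given $f \in R^\i(\sigma(S), \mu) \subset M_a(S),$ the decomposition $f = f_{\Delta_0} \oplus f_{\Delta_1}$ yields $f_{\Delta_0} \in L^\i(\mu_{\Delta_0})$ and $f_{\Delta_1} \in R^\i(\sigma(S), \mu_{\Delta_1}) \subset M_a(S_{\Delta_1}).$ Accordingly $M_{S, f} = M_{N_{\Delta_0}, f_{\Delta_0}} \oplus M_{S_{\Delta_1}, f_{\Delta_1}},$ so invertibility and Fredholmness of $M_{S, f}$ each pass through the direct sum.

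For part (1), the normal summand $f_{\Delta_0}(N_{\Delta_0})$ is invertible if and only if $f_{\Delta_0}$ is invertible in $L^\i(\mu_{\Delta_0})$ by the standard spectral theorem; Theorem \ref{theoremA}(1) gives that the pure summand is invertible if and only if $f_{\Delta_1}$ is invertible in $M(S_{\Delta_1}).$ Combining, $M_{S, f}$ is invertible if and only if $f$ is invertible in $M(S) = L^\i(\mu_{\Delta_0}) \oplus M(S_{\Delta_1}).$

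For part (2), I would apply Theorem \ref{theoremA}(2) to obtain $f_{\Delta_1} = p_1 f_{0, \Delta_1}$ with $p_1$'s zeros in $\sigma(S_{\Delta_1}) \setminus \sigma_e(S_{\Delta_1})$ and $f_{0, \Delta_1} \in M_a(S_{\Delta_1})$ invertible, and separately observe that $f_{\Delta_0}(N_{\Delta_0})$ is Fredholm exactly when $f_{\Delta_0}$ essentially vanishes at finitely many isolated atoms of $\mu_{\Delta_0}$ of finite multiplicity (i.e., at points of $\sigma(N_{\Delta_0}) \setminus \sigma_e(N_{\Delta_0})$), giving $f_{\Delta_0} = p_0 f_{0, \Delta_0}$ with $f_{0, \Delta_0}$ invertible in $L^\i(\mu_{\Delta_0}).$ Combining produces $f = p f_0$ with $p = p_0 p_1$ and $f_0 = f_{0, \Delta_0} \oplus f_{0, \Delta_1};$ because Fredholmness of $M_{S, f}$ forces $S - z_i$ to be Fredholm at each zero $z_i,$ these zeros lie in $\sigma(S) \setminus \sigma_e(S).$ Invertibility of $f_0$ in $M(S)$ follows from Theorem \ref{InvInMA}, and the index formula $\text{ind}(M_{S, f}) = \sum n_i \, \text{ind}(S - z_i)$ is inherited from Theorem \ref{theoremA}(2) together with the vanishing of the Fredholm index for normal operators.

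The main obstacle will be upgrading ``$f_0 \in M_a(S)$'' (what Theorem \ref{theoremA}(2) provides) to ``$f_0 \in R^\i(\sigma(S), \mu)$'' as required by the Corollary's statement. I would resolve this via the Chaumat apparatus: for the $R^\i$-pure part of $R^\i(\sigma(S), \mu),$ the original Chaumat isomorphism places $\rho_{\sigma(S), \mu}(f)$ in $R^\i(\overline{E_S^\i}, \area_{E_S^\i}),$ and the identity $\rho_{\sigma(S), \mu}(f) = p \cdot \rho_{\sigma(S), \mu}(f_0)$ forces $\rho_{\sigma(S), \mu}(f)$ to vanish at each $z_i$ with at least multiplicity $n_i.$ The quotient $\rho_{\sigma(S), \mu}(f_0) = \rho_{\sigma(S), \mu}(f)/p$ is therefore bounded and analytic with only removable singularities at the $z_i,$ so lies in the same Chaumat image; pulling back places $f_0$ inside $R^\i(\sigma(S), \mu)$ and completes the proof.
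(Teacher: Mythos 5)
Your proposal takes essentially the same route as the paper: the paper's proof of Corollary \ref{corollaryA} is precisely the reduction via \eqref{MDecompEq} and Proposition \ref{SMDepProp} to the direct sum $M_{S,f}=M_{N_{\Delta_0},f}\oplus M_{S_{\Delta_1},f},$ handling the normal summand by the spectral theorem and the pure summand by Theorem \ref{theoremA}. Your closing paragraph (upgrading $f_0\in M_a(S)$ to $f_0\in R^\i(\sigma(S),\mu)$ by dividing by $p$ in the Chaumat picture) supplies a detail that the paper's two-line proof leaves implicit, and it is consistent with how the paper carries out the analogous division step for $M_a(S)$ in the proof of Theorem \ref{theoremA} (2).
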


\begin{proof}
Let $\Delta_0$ and $\Delta_1$ be as in \eqref{MDecompEq}. Then
\[
\ M_{S,f} = M_{N_{\Delta_0},f} \oplus M_{S_{\Delta_1},f}. 
\]
Since $N_{\Delta_0}$ is normal, 	$M_{N_{\Delta_0},f}$ is invertible or Fredholm if and only if $f$ is invertible in $L^\i (\mu_{\Delta_0}).$ Since $S_{\Delta_1}$ is pure by Proposition \ref{SMDepProp}, applying Theorem \ref{theoremA}, we see that $M_{S_{\Delta_1},f}$ is invertible if and only if $f$ is invertible in $M(S_{\Delta_1}).$ Now the corollary follows Theorem \ref{theoremA}.
\end{proof}

\begin{proof} (Theorem \ref{theoremA} (1)):
For $f\in M_a(S),$ if $M_{S,f}$ is invertible in $\mathcal L(\mathcal H),$ then
the invertibility of $f$ in $M_a(S)$ follows from Proposition \ref{OpInvertibleForS} and Theorem \ref{InvInMA}.
\end{proof}

To prove Theorem \ref{theoremA} (2), we need several lemmas. The following lemma is from \cite{cd78}.

\begin{lemma}\label{CowenDouglas}
Suppose that $T\in \mathcal L(\mathcal H),$ $\lambda_0\in \sigma(T) \setminus \sigma_e(T),$ and $\lambda_0$ is not an eigenvalue of $T,$  then there exists a constant $\delta > 0$ and there exist conjugate analytic vector-valued functions $k_\lambda^j\in \mathcal H$ on $\D(\lambda_0, \delta)$ (i.e. $(f, k_\lambda^j)$ is analytic for $f\in \mathcal H$) which are linear independent for $\lambda \in \D(\lambda_0, \delta),~ j=1,2,...n,$ where $n=-ind(T-\lambda_0),$ so that
\[
\ \D(\lambda_0, \delta) \subset  \sigma(T) \setminus \sigma_e(T)
\]
and
\[
\ (T-\lambda )^*k_\lambda^j=0,\text{ for } j = 1, 2, ...n, \text{ and } \lambda \in \D(\lambda_0, \delta).
\]
In addition, if $T$ has no eigenvalues and $\Omega$ is a component of $\sigma(T) \setminus \sigma_e(T)$ such that $ind(T - \lambda) = -1$ for $\lambda \in \Omega,$ then there exists a conjugate analytic vector-valued function $k_\lambda\in \mathcal H$ on $\Omega$ such that for $\lambda \in \Omega,$ $k_\lambda \ne 0$ and $(T-\lambda )^*k_\lambda=0.$
\end{lemma}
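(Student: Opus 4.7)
The plan is to separate the lemma into a local step near $\lambda_0$ (via Fredholm bookkeeping and a Neumann series for $(T-\lambda)^*$) and, for the final assertion, a global step on $\Omega$ via triviality of holomorphic line bundles on planar domains.

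Since $\lambda_0 \in \sigma(T)\setminus\sigma_e(T)$, $T-\lambda_0$ is Fredholm; the hypothesis that $\lambda_0$ is not an eigenvalue of $T$ gives $\ker(T-\lambda_0) = \{0\}$, whence $n = -\text{ind}(T-\lambda_0) = \dim\ker(T-\lambda_0)^*$. Upper semicontinuity of the Fredholm kernel dimension supplies a $\delta_0 > 0$ on which $T-\lambda$ remains Fredholm of index $-n$ with $\ker(T-\lambda) = \{0\}$ and $\dim\ker(T-\lambda)^* = n$; this also forces $\D(\lambda_0,\delta_0) \subset \sigma(T)\setminus\sigma_e(T)$.

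For the local sections, set $M = \ker(T-\lambda_0)^*$ and split $\mathcal H = M \oplus M^\perp$. Since $\ker(T-\lambda_0) = \{0\}$, the restriction $(T-\lambda_0)^*|_{M^\perp} \colon M^\perp \to \mathcal H$ is a bounded bijection with bounded inverse $R\colon \mathcal H\to M^\perp$. Writing $\mu = \lambda - \lambda_0$, the identity $(T-\lambda)^* = (T-\lambda_0)^* - \bar\mu I$ turns the equation $(T-\lambda)^* k = 0$, after splitting $k = k_M + k_{M^\perp}$, into
\[
(I - \bar\mu\, R|_{M^\perp})\, k_{M^\perp} \;=\; \bar\mu\, R|_M\, k_M,
\]
which inverts by Neumann series for $|\mu| < 1/\|R\|$, producing $k_{M^\perp}$ as a power series in $\bar\mu$ with coefficients linear in $k_M$. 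Taking a basis $e_1,\dots,e_n$ of $M$ and letting $k_\lambda^j$ be the solution with $k_M = e_j$ gives $n$ sections of $\ker(T-\lambda)^*$; each $(f, k_\lambda^j)$ is then an honest power series in $\lambda - \lambda_0$ (holomorphic in $\lambda$ once the conjugate-linearity of the inner product in its second slot is accounted for), and $k_{\lambda_0}^j = e_j$ are linearly independent, so independence persists on a possibly smaller $\D(\lambda_0, \delta)$.

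For the final assertion, assume $T$ has no eigenvalues and $\text{ind}(T-\lambda) = -1$ on a component $\Omega$ of $\sigma(T)\setminus\sigma_e(T)$. The local step yields non-vanishing conjugate-analytic sections of the rank-one kernel bundle near every point of $\Omega$, with transition cocycles that are antiholomorphic and non-vanishing. Via $\lambda \mapsto \bar\lambda$ this is the cocycle data of a holomorphic line bundle on a planar domain; that bundle is trivial because $H^1(\Omega, \mathcal O^*) = 0$, which follows from the exponential sequence combined with $H^1(\Omega, \mathcal O) = 0$ (solvability of $\bar\partial$ on planar open sets) and $H^2(\Omega, \mathbb Z) = 0$. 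Trivialization then gives a global non-vanishing conjugate-analytic section $k_\lambda$ on $\Omega$. The main obstacle is precisely this global patching: the Fredholm setup and the Neumann-series step are routine, whereas the line-bundle triviality, though classical, is genuinely non-local; for the purposes of the paper it can be invoked directly from the Cowen--Douglas framework \cite{cd78}.
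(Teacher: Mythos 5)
The paper offers no proof of this lemma at all: it simply states that the result ``is from \cite{cd78}'' and moves on. Your argument is a correct, self-contained reconstruction of the standard Cowen--Douglas construction, so there is nothing to fault and nothing in the paper to compare it against beyond the citation. Two small remarks. First, in the local step you do not even need to shrink the disc to preserve linear independence: since $k_{M^\perp}\in M^\perp$, the orthogonal projection of $k_\lambda^j$ onto $M=\ker(T-\lambda_0)^*$ is identically $e_j$, so the sections are independent wherever the Neumann series converges. Second, the inclusion $\D(\lambda_0,\delta)\subset\sigma(T)\setminus\sigma_e(T)$ uses implicitly that $n\ge 1$ (otherwise $T-\lambda_0$ would be invertible and $\lambda_0\notin\sigma(T)$); this is worth one line. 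The global step via triviality of holomorphic line bundles on planar domains ($H^1(\Omega,\mathcal O^*)=0$ from the exponential sequence) is exactly the mechanism in \cite{cd78}, and invoking it, as you do, is appropriate.
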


\begin{lemma}\label{CDExtension}
Suppose that $T\in \mathcal L(\mathcal H),$ $0\in \sigma(T) \setminus \sigma_e(T),$ and $0$ is not an eigenvalue of $T.$
If $S\in \mathcal L(\mathcal H)$ and $TS=ST,$ then there exists $u_0\in \sigma(S)$ and $\delta> 0$ such that

(1) $\D(0, \delta) \subset \sigma(T) \setminus \sigma_e(T);$

(2) there exists $\delta_0 > 0$ (depending on $\delta$) and $K_u\in \mathcal H$ such that  $(S-u)^* K_u = 0,$ $\|K_u\| = 1,$  and $T^*K_{u_0} = 0$ for $u\in \D(u_0,\delta_0);$ and

(3) For $0 < |u_1 - u_0| < \delta_0,$ there exists $0 < \delta_1 < \min (|u_1 - u_0|, \delta_0 - |u_1 - u_0|)$ such that $K_u$ in (2) can be chosen as conjugate analytic on $\D(u_1, \delta_1).$   
 \end{lemma}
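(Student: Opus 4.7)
The plan is to begin by applying Lemma \ref{CowenDouglas} to $T$ at the point $0$: since $0\in\sigma(T)\setminus\sigma_e(T)$ and $0$ is not an eigenvalue of $T$, this produces $\delta>0$ with $\D(0,\delta)\subset\sigma(T)\setminus\sigma_e(T)$, which is exactly (1), together with conjugate analytic vector-valued functions $k_\lambda^1,\dots,k_\lambda^n\in\mathcal H$ ($n=-\mathrm{ind}(T)\ge 1$), linearly independent on $\D(0,\delta)$ and satisfying $(T-\lambda)^* k_\lambda^j=0$. The commutation $TS=ST$ gives $T^*S^*=S^*T^*$, so $S^*$ preserves each $n$-dimensional space $\ker(T-\lambda)^*$. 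Expanding $S^* k_\lambda^j=\sum_i a_{ij}(\lambda)k_\lambda^i$ and pairing with a biorthogonal system $\{v_m\}$ satisfying $(k_0^i,v_m)=\delta_{im}$, one obtains a linear system whose coefficient matrix $((k_\lambda^i,v_m))$ is conjugate analytic in $\lambda$ and close to the identity for $\lambda$ near $0$; inverting it shows that the entries $a_{ij}(\lambda)$ of the $n\times n$ matrix $A(\lambda)$ representing $S^*|_{\ker(T-\lambda)^*}$ are conjugate analytic in $\lambda$.

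Next, let $p(w,\lambda)=\det(wI-A(\lambda))$, a monic polynomial in $w$ of degree $n$ with conjugate analytic coefficients in $\lambda$. I choose $\overline{u_0}$ to be an eigenvalue of $A(0)=S^*|_{\ker T^*}$, selected so that its associated root branch of $p$ is non-constant as $\lambda$ varies (the existence of such a branch is the principal technical point, discussed below). Let $K_{u_0}\in\ker T^*$ be a unit eigenvector; then $u_0\in\sigma(S)$, $(S-u_0)^*K_{u_0}=0$, $\|K_{u_0}\|=1$, and $T^* K_{u_0}=0$. After substituting $\mu=\bar\lambda$, $p$ becomes a polynomial in $w$ with holomorphic coefficients in $\mu$, so the implicit function theorem (or a Puiseux expansion at a multiple root) yields a conjugate analytic function $\overline{u(\lambda)}$ near $\lambda=0$ with $\overline{u(0)}=\overline{u_0}$ and $p(\overline{u(\lambda)},\lambda)\equiv 0$. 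Since $u(\lambda)$ is conjugate analytic and non-constant, the open mapping property provides $\D(u_0,\delta_0)\subset u(\D(0,\delta'))$ for some $\delta_0,\delta'>0$. For each $u\in\D(u_0,\delta_0)$, pick $\lambda\in\D(0,\delta')$ with $u(\lambda)=u$ and let $K_u\in\ker(T-\lambda)^*$ be an associated unit eigenvector of $A(\lambda)$; this gives (2).

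For (3), given $u_1$ with $0<|u_1-u_0|<\delta_0$, choose $\delta_1$ with $0<\delta_1<\min(|u_1-u_0|,\delta_0-|u_1-u_0|)$, so that $\overline{\D(u_1,\delta_1)}$ avoids $u_0$ and lies in $\D(u_0,\delta_0)$. Away from the branch point at $\lambda=0$, the conjugate analytic map $u(\lambda)$ is locally invertible, yielding a conjugate analytic branch $\lambda=\lambda(u)$ on $\D(u_1,\delta_1)$. The unit eigenvector of $A(\lambda)$ for the simple eigenvalue $\overline{u(\lambda)}$ is itself conjugate analytic in $\lambda$, obtained via the spectral projection $\frac{1}{2\pi i}\oint (wI-A(\lambda))^{-1}\,dw$ over a small contour enclosing $\overline{u(\lambda)}$; composition with $\lambda(u)$ produces the required conjugate analytic $K_u$ on $\D(u_1,\delta_1)$. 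The main obstacle throughout is the degenerate case in which every root branch of $p$ is constant in $\lambda$: then the conjugate analytic families of eigenvectors would span infinite-dimensional subspaces inside $\ker(S-u_j)^*$ for each eigenvalue $\overline{u_j}$ of $A(0)$, forcing each such $u_j$ into $\sigma_e(S)$ and ruling out any disk of eigenvalues of $S^*$; this obstruction must be excluded using the additional structure available in the application (most naturally by restricting to an invariant subspace of $\mathcal H$ that preserves the commutativity of $T$ and $S$ but removes the constant branches), thereby guaranteeing the existence of the non-constant branch used above.
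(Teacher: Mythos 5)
Your proposal follows essentially the same route as the paper's proof: Cowen--Douglas kernels $k_\lambda^j$ for $T$ on $\D(0,\delta)$, the analytic matrix $P(\lambda)=K^{-1}(\lambda)S(\lambda)$ representing $S^*$ on $\ker(T-\lambda)^*$, the characteristic polynomial $p(\lambda,u)=\det(P(\lambda)-uI)$, a Puiseux branch through $(0,u_0)$ whose open-mapping image fills a disk $\D(u_0,\delta_0)$ for (2), and local invertibility of the branch away from $\lambda=0$ to get the conjugate analytic selection in (3). The one step you explicitly leave open --- that the chosen root branch of $p$ might be constant in $\lambda$ --- is precisely the unacknowledged weak point of the paper's own argument, which simply selects $\delta_2$ with $\partial u(z)/\partial z\neq 0$ on a punctured disk and $\D(u_0,\delta_0)\subset u(\D(0,\delta_2))$, tacitly presuming non-constancy; this presumption is not automatic (for $S=cI$ every branch is constant and conclusion (2) fails), so the lemma as stated needs either an extra hypothesis or the contradiction you sketch, namely that a constant branch puts its eigenvalue in $\sigma_e(S)$ via an infinite linearly independent family in $\ker(S-u_j)^*$. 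In short: same method as the paper, and the gap you flag is real, but it is the paper's gap as much as yours.
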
  

\begin{proof}
From Lemma \ref{CowenDouglas}, there exist linearly independent conjugate analytic vector-valued functions $\{k_\lambda^j\}_{j=1}^N \subset \mathcal H$ on $\D(0, \delta')\subset \sigma(T) \setminus \sigma_e(T)$ such that   
 \[
 \ T^*k_\lambda^j = \bar \lambda k_\lambda^j, ~j = 1,2,...,N 
 \]
where $N = dim (ker(T-\lambda)^*)$. Since $T \mathcal H$ is an invariant subspace of $S$,  $M_\lambda = ker(T-\lambda)^*$ is an invariant subspace of $S^*$ and $dim(M_\lambda) = N$ for $\lambda\in \D(0, \delta).$ Let
 \[
 \ S^*k_\lambda^j = \sum_{l=1}^N \overline{p_{jl}(\lambda)} k_\lambda^l.
 \]
The matrix $((k_0^i, k_0^j))_{n\times n}$ is invertible. Set the following matrices
 \[
 \ S(\lambda) = ((Sk_0^i, k_\lambda^j))_{n\times n}, ~ K(\lambda) = ((k_0^i, k_\lambda^j))_{n\times n}, \text{ and } P(\lambda) = (p_{jl}(\lambda))_{n\times n}. 
 \]
Then
 \[
 \ S(\lambda) = K(\lambda) P(\lambda).
 \]
We choose $\delta$ small enough such that $K(\lambda)$ is invertible on $\D(0, \delta).$ Hence, $K(\lambda)$ and  $S(\lambda)$ are analytic on $\D(0, \delta)$. Therefore,
 \[
 \ P(\lambda) =  K^{-1}(\lambda) S(\lambda).
 \]
is analytic on $\D(0, \delta)$. Define
 \[
 \ p(\lambda, u) = det (P(\lambda) - uI),
 \]
then there exists $u_0$ such that $p(0, u_0) = 0$ and there exists a unit vector $K_{u_0}$ that is a linear combination of $\{k_o^j\}$ such that $(S-u_0)^*K_{u_0} = 0.$ Using Puiseux's Expansion, for $\delta < \delta',$ 
 we find $\delta_0 > 0$ such that the Riemann surface
 \[
 \ \{(\lambda, u):~ \lambda\in \D(0,\delta),~ u\in \D(u_0,\delta_0), ~ p(\lambda, u) = 0 \}
 \] 
 has a solution $\lambda = \lambda(z) = z^a$ and $u = u(z) = u_0 + \sum_{n=0}^\i c_n  z^{b+n}$ for some positive integers $a$ and $b$ near $(0, u_0).$ We select $\delta_2 > 0$ small enough such that $\frac{\partial \lambda (z)}{\partial z} \ne 0$ and $\frac{\partial u (z)}{\partial z} \ne 0$ for $z\in \D(0,\delta_2) \setminus \{0\}$ and select $\delta_0$ small enough such that $\D(u_0,\delta_0) \subset u(\D(0,\delta_2)).$
 Therefore, for $0 < |u_1 - u_0| < \delta_0$ and $0 < \delta_1 < \min (|u_1 - u_0|, \delta_0 - |u_1 - u_0|),$ there exists an analytic function $\lambda (u)$ from $\D(u_1,\delta_1)$ into $\D(0,\delta)$ such that $p_0(\lambda (u), u) = 0$ for $u \in \D(u_1,\delta_1).$ (1), (2), and (3) follow.
\end{proof}

\begin{lemma}\label{RInteriorLemma}
Let $S\in \mathcal L(\mathcal  H)$ be a pure subnormal operator and let  $k_\lambda\in \mathcal H$ be a non-zero conjugate analytic vector-valued function on $\D(0,\delta)$ such that $S^*k_\lambda = \bar \lambda k_\lambda$. Then $\D(0,\delta)\subset \mathcal R_S, ~\gamma-a.a.,$ where $\mathcal R_S$ is defined as in \eqref{RSDef}.
\end{lemma}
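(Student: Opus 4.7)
The plan is to exploit the eigenrelation $S^*k_\lambda = \bar\lambda k_\lambda$ to manufacture, for $\lambda \in \D(0,\delta)$ off a discrete set, an explicit Cauchy-transform witness placing $\lambda$ in $E_S$, and then to promote this to $\mathcal R_S$-membership $\gamma$-almost everywhere.

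The key construction is to set $G_\lambda := (\bar z - \bar\lambda)\,k_\lambda$ as an element of $\mathcal K$. For any $H \in \mathcal H$,
\[
(H, G_\lambda)_{\mathcal K} = \int (z-\lambda)\langle H(z), k_\lambda(z)\rangle\, d\mu(z) = ((S-\lambda)H, k_\lambda) = (H, (S-\lambda)^*k_\lambda) = 0,
\]
so $G_\lambda \in \mathcal K \ominus \mathcal H$. The associated measure $d\mu_{H, G_\lambda} = (z-\lambda)\langle H, k_\lambda\rangle\, d\mu$ satisfies $\widetilde{\mu_{H, G_\lambda}}(\lambda) \le \|H\|\,\|k_\lambda\| < \infty$, and since the $(z-\lambda)$ factors cancel,
\[
\CT\mu_{H, G_\lambda}(\lambda) = \int \langle H(z), k_\lambda(z)\rangle\, d\mu(z) = (H, k_\lambda).
\]
Fixing any $H \in \mathcal H$ with $(H, k_0) \ne 0$, the holomorphic map $\lambda \mapsto (H, k_\lambda)$ on $\D(0,\delta)$ is not identically zero, so its zero set $Z_H$ is discrete and hence $\gamma(Z_H) = 0$. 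For every $\lambda \in \D(0,\delta) \setminus Z_H$, the pair $(H, G_\lambda)$ verifies the defining conditions \eqref{ESDef} of $E_S$, establishing $\D(0,\delta) \setminus Z_H \subset E_S$.

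It remains to upgrade $E_S$-membership to $\mathcal R_S$-membership, $\gamma$-almost everywhere. By Theorem \ref{MAEqualREE}, $\rho_S$ is a weak-star isomorphism from $M_a(S)$ onto $R^\i(\overline E, E) = H^\i(\mathcal R_S)$, so the homomorphism $f \mapsto (f(S)H, k_\lambda)/(H, k_\lambda)$ on $M_a(S)$ (weak-star continuous by Proposition \ref{MAProp}(2) applied to $\mu_{H, G_\lambda}$) transfers to a weak-star continuous point evaluation on $R^\i(\overline E, E)$ at $\lambda$, placing $\lambda$ in the envelope of $R^\i(\overline E, E)$. The final step is to equate this envelope with $\mathcal R_S$ in the $\gamma$-a.a.\ sense: the Chaumat-type identity of Proposition \ref{MAProp}(5) provides a density in $R^\i(\overline E, E)^\perp \cap L^1(\area_E)$ whose Cauchy transform carries the value $(H, k_\lambda) \ne 0$ at $\lambda$; approximating such densities in $L^1(\area_E)$ by the fixed countable family $\{g_n\}$ appearing in the definition \eqref{RSDef} of $\mathcal R_S$ and invoking Tolsa's theorems \eqref{GammaEqualGammaP} and \eqref{SAAC} upgrades the resulting $\area$-a.a.\ pointwise convergence of truncated Cauchy transforms to $\gamma$-a.a.\ convergence, which delivers the conclusion.

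The principal obstacle is precisely the last step: converting the routine $\area$-a.a.\ equivalence between the envelope of $R^\i(\overline E, E)$ and the removable set $\mathcal R_S$ into a $\gamma$-a.a.\ statement matching the lemma. This is exactly where Tolsa's semiadditivity of analytic capacity is indispensable; without it, only the weaker $\area$-a.a.\ conclusion would be available from the $L^1$ density of $\{g_n\}$, and the sharper analytic-capacity refinement needed for later applications (e.g., to Theorem \ref{theoremA}(2) and the Fredholm index) would be out of reach.
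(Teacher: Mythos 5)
Your first step is correct and is a genuinely different (and rather elegant) way to see that $\D(0,\delta)$ lies in $E_S$ off a discrete set: the vector $G_\lambda=(\bar z-\bar\lambda)k_\lambda$ does lie in $\mathcal K\ominus\mathcal H$, the cancellation giving $\CT\mu_{H,G_\lambda}(\lambda)=(H,k_\lambda)$ is valid, and the zero set of the analytic function $\lambda\mapsto(H,k_\lambda)$ is discrete, hence $\gamma$-null. The problem is that the lemma is not a statement about $E_S$; it is a statement about $\mathcal R_S$, and your second step --- the passage from $E_S$-membership to $\mathcal R_S$-membership off a $\gamma$-null set --- is exactly the hard point, and the argument you sketch for it does not go through. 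Membership in $\mathcal R_S$ as defined in \eqref{RSDef} requires the \emph{principal value} $\lim_{\epsilon\to 0}\mathcal C_\epsilon(g_n\area_E)(\lambda)$ to exist and be nonzero for one of the \emph{specific} densities $g_n\in R^\i(\overline E_S,E_S)^\perp$. Your witness is a measure $\mu_{H,G_\lambda}$ carried by $\text{spt}\,\mu$, which can be mutually singular with $\area_E$ (in the Section 2 example $\mu$ lives on circles while $E$ has positive area). Every identification the paper provides between these objects --- $E_S\subset E$, $E\approx\mathcal R_S$, and the identity of Proposition \ref{MAProp}(5) --- holds only $\area$-a.a., and a set of zero area can have positive analytic capacity, so no $\gamma$-a.a. conclusion follows. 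Moreover, $L^1(\area_E)$-approximation of a putative density by the $g_n$ does not yield pointwise convergence of truncated Cauchy transforms outside a $\gamma$-null set; Tolsa's theorems \eqref{GammaEqualGammaP} and \eqref{SAAC} do not supply such a maximal-function-type estimate in the blanket form you invoke.

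The paper closes this gap by a duality argument that never tries to verify $\mathcal R_S$-membership pointwise. Suppose a compact set $E\subset\D(0,\delta)\setminus\mathcal R_S$ has $\gamma(E)>0$. By \eqref{REEEqualsHIR}, Theorem \ref{MAEqualREE}, and Proposition \ref{MAProp}(5), the Ahlfors function of $E$ gives a bounded analytic $f$ on $E^c$ with $f(\infty)=0$, $f'(\infty)\ne 0$, such that both $f$ and the difference quotients $\frac{f(z)-f(\lambda)}{z-\lambda}$ belong to $M_a(S)$ for $\lambda\in\D(0,\delta)\setminus E$. The eigenvector relation then gives $(fk_0,k_\lambda)=f(\lambda)(k_0,k_\lambda)$, and since the left-hand side and $(k_0,k_\lambda)$ are analytic in $\lambda$ on all of $\D(0,\delta)$, $f$ extends to a bounded entire function, forcing $f\equiv 0$ and contradicting $f'(\infty)\ne 0$. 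This is where the eigenvector hypothesis actually earns the $\gamma$-a.a. (rather than merely $\area$-a.a.) conclusion; to repair your proof you would need to replace your step 2 by an argument of this kind.
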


\begin{proof}
Suppose that $E\subset \D(0,\delta)\cap \setminus \mathcal R_S$ is a compact subset with $\gamma(E) > 0$. From \eqref{REEEqualsHIR}, Theorem \ref{MAEqualREE}, and Proposition \ref{MAProp} (5), we find a function $f(z)$ that is bounded and analytic on $E^c$ such that $f(\infty) = 0,$ $f'(\infty) \ne 0,$ and
 \[
 \ f(z), ~\dfrac{f(z) - f(\lambda)}{z - \lambda} \in M_a(S), ~ \lambda \in \D(0,\delta) \setminus E.
 \]
Hence,
 \[
 \ (fk_0, k_\lambda) - f(\lambda)(k_0, k_\lambda) = \left(\dfrac{f(z) - f(\lambda)}{z - \lambda}k_0, (S-\lambda I)^*k_\lambda\right) = 0.
 \]
Therefore, $f$ can be extended as a bounded and analytic function on $\mathbb C,$ which implies $f = 0.$ This is a contradiction.
 
\end{proof}

\begin{corollary}\label{RInteriorCor}
Let $S\in \mathcal L(\mathcal H)$ be a pure subnormal operator and let $\mu$ be the scalar-valued spectral measure for $N=mne(S).$
Then $\sigma(S) \setminus (\sigma_e(S) \cap \text{spt}\mu) \subset \mathcal R_S, ~\gamma-a.a.$.
\end{corollary}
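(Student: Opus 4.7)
The plan is to split the target set via
\[
\ \sigma(S)\setminus(\sigma_e(S)\cap \text{spt}(\mu)) = [\sigma(S)\setminus \sigma_e(S)] \cup [\sigma(S)\setminus \text{spt}(\mu)]
\]
and to exhibit, for every $\lambda_0$ in either piece, an open disk $\D(\lambda_0,\delta)$ contained in $\mathcal R_S$ up to a set of analytic capacity zero. Since $\C$ is second countable, a countable subcover of such disks suffices, and the semiadditivity \eqref{SAAC} of $\gamma$ assembles the conclusion. In every case the closing step will be Lemma \ref{RInteriorLemma}, applied to a nonzero conjugate-analytic $\mathcal H$-valued function $k_\lambda$ with $(S-\lambda)^*k_\lambda=0$ on $\D(\lambda_0,\delta)$.

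A preliminary observation used throughout is that a pure subnormal operator $S$ has no eigenvalues: if $Sx=\lambda x$ with $x\ne 0$, then $Nx=\lambda x$ and, by normality of $N$, $N^*x=\bar\lambda x$, so $\C x\subset \mathcal H$ reduces $N$ and the scalar $(\lambda I)|_{\C x}$ is a nontrivial normal direct summand of $S$, contradicting purity. With this in hand, for $\lambda_0\in \sigma(S)\setminus \sigma_e(S)$ Lemma \ref{CowenDouglas} applies directly and produces the required $k_\lambda$ on a disk around $\lambda_0$, so Lemma \ref{RInteriorLemma} gives $\D(\lambda_0,\delta)\subset \mathcal R_S$, $\gamma$-a.a.

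For the second piece, fix $\lambda_0\in \sigma(S)\setminus \text{spt}(\mu)$ and pick $\delta>0$ with $\overline{\D(\lambda_0,\delta)}\cap \text{spt}(\mu)=\emptyset$, so that $(N-\lambda)^{-1}$ is a bounded holomorphic $\mathcal L(\mathcal K)$-valued function on $\D(\lambda_0,\delta)$. For each $g\in \mathcal K\ominus \mathcal H$ set
\[
\ k_\lambda^g := P_{\mathcal H} (N^*-\bar\lambda)^{-1} g,
\]
which is conjugate-analytic in $\lambda$ with values in $\mathcal H$. Using $(N-\lambda)h\in \mathcal H$ for $h\in \mathcal H$, the direct computation
\[
\ ((S-\lambda)h,\, k_\lambda^g) = ((N-\lambda)h,\, (N^*-\bar\lambda)^{-1} g) = (h,g) = 0
\]
yields $(S-\lambda)^* k_\lambda^g = 0$. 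To secure nonvanishing at $\lambda_0$, introduce $T(\lambda_0) := P_{\mathcal K\ominus \mathcal H}(N-\lambda_0)^{-1}|_{\mathcal H}$: if $T(\lambda_0)\equiv 0$ then $(N-\lambda_0)^{-1}\mathcal H\subset \mathcal H$, which combined with the vanishing of $\ker(S-\lambda_0)$ forces $S-\lambda_0$ to be invertible, contradicting $\lambda_0\in \sigma(S)$. Hence $T(\lambda_0)^*\ne 0$, and since $T(\lambda_0)^* g = k_{\lambda_0}^g$, some $g$ makes $k_{\lambda_0}^g\ne 0$. Continuity of $k_\lambda^g$ in $\lambda$ preserves nonvanishing on a smaller disk about $\lambda_0$, and Lemma \ref{RInteriorLemma} closes this piece.

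The main obstacle, and the reason this second piece resists Lemma \ref{CowenDouglas}, is that $\lambda_0\in \sigma(S)\setminus \text{spt}(\mu)$ may still lie in $\sigma_e(S)$, with $\ker(S-\lambda_0)^*$ infinite-dimensional. The explicit formula $k_\lambda^g=P_{\mathcal H}(N^*-\bar\lambda)^{-1}g$ circumvents this by extracting conjugate-analytic kernel vectors directly from the holomorphic resolvent of $N$, with no Fredholm hypothesis on $S$ itself.
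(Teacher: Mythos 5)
Your proposal is correct, and it reaches the conclusion by a genuinely different construction than the paper's, although both arguments funnel through the same endgame, Lemma \ref{RInteriorLemma}. The paper does not split $\sigma(S)\setminus(\sigma_e(S)\cap\text{spt}\mu)$ into two pieces: it observes that near any point $\lambda_0$ of this set the operator $S-\lambda_0$ is bounded below (either because it is Fredholm with trivial kernel, or because $N-\lambda_0$ is invertible) and non-surjective, takes a left inverse $T$ of $S-\lambda_0$ together with a nonzero $k\in\ker(S-\lambda_0)^*$, and uses the single formula $K_\lambda=(1-\overline{(\lambda-\lambda_0)}\,T^*)^{-1}k$ to manufacture the conjugate-analytic family of co-eigenvectors in both cases at once. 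You instead decompose the set via De Morgan and treat the Fredholm part with Lemma \ref{CowenDouglas} and the off-$\text{spt}\mu$ part with the resolvent vectors $P_{\mathcal H}(N^*-\bar\lambda)^{-1}g$; both computations are valid (the identity $((S-\lambda)h,k_\lambda^g)=(h,g)=0$ and the non-degeneracy argument via $T(\lambda_0)^*g=k_{\lambda_0}^g$ are correct), and your preliminary observation that a pure subnormal operator has no eigenvalues is exactly what makes either route work. What the paper's version buys is uniformity and economy: no appeal to the Cowen--Douglas machinery is needed for this corollary. What your version buys is that the second case imposes no Fredholm hypothesis at all, which you rightly emphasize. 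The only detail you leave implicit in the first case is that $-\mathrm{ind}(S-\lambda_0)\ge 1$ (immediate from injectivity plus non-invertibility of $S-\lambda_0$), so that Lemma \ref{CowenDouglas} actually yields at least one nonvanishing $k_\lambda^j$; with that half-sentence added, the argument is complete.
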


\begin{proof} 
Let $\mathbb D(0,\delta) \subset \sigma(S) \setminus (\sigma_e(S) \cap \text{spt}\mu).$ Then there exists $T\in \mathcal L(\mathcal H)$ such that $TS = I$ and $S^*k = 0$ for some $k\ne 0.$ The vector 
\[
\ K_\lambda = (1 - \bar \lambda T^*)^{-1}k
\]
is co-analytic on $\mathbb D(0,\delta)$ for $\delta < \|T\|^{-1}$ and $S^*K_\lambda = \bar \lambda K_\lambda.$
	The proof now follows from Lemma \ref{RInteriorLemma}. 
\end{proof}

\begin{proof} (Theorem \ref{theoremA} (2)):
Suppose that $f\in M_a(S)$, then  by Corollary \ref{RInteriorCor} and Proposition \ref{MAProp} (5), $f$ is analytic on $\sigma(S) \setminus \sigma_e(S)$ and
\[
 \ \dfrac{f(z) - f(\lambda)}{z - \lambda} \in M_a(S), ~ \lambda \in \sigma(S) \setminus \sigma_e(S).
 \]
Since $M_{S,f}$  is Fredholm, $f$ has only finite many zeros in $\sigma(S)\setminus \sigma_e(S)$. Let $f(z) = p(z) f_0(z)$, where $p(z)$ has only zeros in $\sigma(S)\setminus \sigma_e(S)$, $f_0 \in M_a(S)$ has no zeros in $\sigma(S)\setminus \sigma_e(S)$, and $M_{S,f_0}$ is Fredholm. 

Set $\Delta = \{z:~ f_0(z) \ne 0\}$ and $\mathcal H_\Delta = \{H\in \mathcal H:~ H(z) = 0~\mu_\Delta-a.a.\}.$ Then $ker(M_{S,f_0}) = \mathcal H_\Delta ,$ $\dim(\mathcal H_\Delta) < \infty,$ and $S\mathcal H_\Delta \subset \mathcal H_\Delta.$ Therefore, $S|_{\mathcal H_\Delta}$ is a normal operator. Hence, $\mathcal H_\Delta = 0$ since $S$ is pure. That is, $0$ is not an eigenvalue of $M_{S,f_0}.$

We want to show that
$ M_{S,f_0} \mathcal H = \mathcal H.$
Otherwise, we can find $u_0$ and $ \delta_0, ~ \delta > 0$ satisfying (1), (2), and (3) of Lemma \ref{CDExtension} and we see that, by Lemma \ref{RInteriorLemma},  $\D(u_0, \delta_0) \subset \mathcal R_S,~ \gamma-a.a.,$ hence, $f_0$ is analytic on $\D(u_0, \delta_0).$ There exists $g_0 \in M_a(S)$ such that $f_0(z) - f_0(u_0) = (z - u_0) g_0$. By Lemma \ref{CDExtension} (2), there exists $k\in \mathcal H$ and $k \ne 0$ such that $S^* k = \bar u_0 k$ and $M_{S,f_0}^* k = 0$. Hence, $f_0(u_0) = 0$. Therefore, $S-u_0I$ is Fredholm and $u_0\in \sigma(S)\setminus \sigma_e(S).$ This is a contradiction since $f_0$ has no zeros in $\sigma(S)\setminus \sigma_e(S).$ Thus, $M_{S,f_0}$ is invertible.   
\end{proof} 

For $f\in L^\i (\mu),$ denote $\sigma(f)$ the essential range of $f$ in $L^\i (\mu).$ 

\begin{corollary} \label{corollaryB}
Let $S\in \mathcal L(\mathcal H)$ be a subnormal operator and let $\mu$ be the scalar-valued spectral measure for $N=mne(S).$ 
Then for $f \in R^\i(\sigma(S),\mu),$
\[
\ \sigma (M_{S,f}) = \sigma (f) \cup cl(\rho_S(f), E_S) 
\]
and
\[
\ \sigma_e (M_{S,f}) = \sigma (f) \cup  cl_e(\rho_S(f), E_S).
\]
In particular, if $S$ is pure, then
\[
\ \sigma (M_{S,f}) = cl(\rho_S(f), E_S) 
\text{ and } \sigma_e (M_{S,f}) =  cl_e(\rho_S(f), E_S).
\]
\end{corollary}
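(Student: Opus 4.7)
The plan is to reduce both spectral identities to the pure case via the direct-sum decomposition \eqref{MDecompEq}, $S = N_{\Delta_0}\oplus S_{\Delta_1}$, so that $M_{S,f}$ splits as $M_{N_{\Delta_0},f}\oplus M_{S_{\Delta_1},f}$ and spectra distribute across the direct sum. On the normal summand, the Borel functional calculus immediately gives $\sigma(M_{N_{\Delta_0},f})=\sigma_e(M_{N_{\Delta_0},f})=\sigma(f|_{\Delta_0})$, the essential range with respect to $\mu_{\Delta_0}$. A quick bookkeeping remark: any $G\in\mathcal K\ominus\mathcal H$ is supported in $\Delta_1$, so $\langle H(z),G(z)\rangle$ vanishes off $\Delta_1$, forcing $E_S=E_{S_{\Delta_1}}$ and $\rho_S=\rho_{S_{\Delta_1}}$ modulo $\area$-null. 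This collapses the envelope side of the problem to the pure summand.

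Assume now $S$ is pure. Theorem \ref{theoremA}(1) combined with Theorem \ref{InvInMA} yields the equivalence $\lambda\notin\sigma(M_{S,f})\iff \text{ess\,inf}_{E_S}|\rho_S(f)-\lambda|>0$. I would then upgrade this essential-infimum condition to the pointwise statement $\lambda\notin cl(\rho_S(f),E_S)$ using \eqref{REEEqualsHIR}: $\rho_S(f)$ has a bounded analytic representative on the open set $\mathcal R_S\approx E_S$ ($\area$-a.a.), while Proposition \ref{ESProp}(c) gives area density one at each point of $E_S$. A pointwise cluster $\rho_S(f)(\lambda_n)\to\lambda$ with $\lambda_n\in E_S$ therefore persists on a positive-measure subset of $E_S$ by analytic continuation, collapsing the essential infimum; the converse implication is immediate. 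This gives $\sigma(M_{S,f})=cl(\rho_S(f),E_S)$ in the pure case.

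For the essential-spectrum identity, Theorem \ref{theoremA}(2) says $\lambda\notin\sigma_e(M_{S,f})\iff f-\lambda=pf_0$ with $p$ a polynomial whose zeros lie in $\sigma(S)\setminus\sigma_e(S)$ and $f_0$ invertible in $M_a(S)$. Via Proposition \ref{MAProp}(3),(5), this becomes $\rho_S(f)(z)-\lambda=p(z)\rho_S(f_0)(z)$ on $E_S$ with $|\rho_S(f_0)|\ge\epsilon$ $\area_{E_S}$-a.a. So any sequence $\lambda_n\in E_S$ with $\rho_S(f)(\lambda_n)\to\lambda$ must cluster at the finite zero set of $p$ in $\sigma(S)\setminus\sigma_e(S)$, hence stays bounded away from $\sigma_e(S)$; this gives $\lambda\notin cl_e(\rho_S(f),E_S)$. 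Conversely, if $\lambda\notin cl_e(\rho_S(f),E_S)$, zeros of $\rho_S(f)-\lambda$ in $E_S$ cluster only inside a compact subset of $\sigma(S)\setminus\sigma_e(S)\subseteq\mathcal R_S$ by Corollary \ref{RInteriorCor}; analyticity forces finiteness, and peeling off the polynomial $p$ with these zeros lets one verify via Theorem \ref{InvInMA} that $(f-\lambda)/p$ lies in $M_a(S)$ and is bounded below on $E_S$, producing the required factorization.

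Reassembling with Step 1 yields $\sigma(M_{S,f})=\sigma(f|_{\Delta_0})\cup cl(\rho_S(f),E_S)$ and its essential-boundary analogue; matching this with the stated $\sigma(f)\cup\cdots$ relies on Theorem \ref{MAEqualREE} identifying $\sigma(f|_{\Delta_1})$ with the essential range of $\rho_S(f)$ on $E_S$, which is always contained in the pointwise closure $cl(\rho_S(f),E_S)$. In the pure case $\Delta_0=\emptyset$ and only the envelope terms survive. The main obstacle I anticipate is the converse in Step 3: producing the polynomial $p$ and verifying that $(f-\lambda)/p$ belongs to $M_a(S)$ with positive essential infimum. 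This requires coupling the analytic continuation of $\rho_S(f)$ across $\mathcal R_S$ with the containment $\sigma(S)\setminus\sigma_e(S)\subset\mathcal R_S$ from Corollary \ref{RInteriorCor}, together with a clustering argument that uses the definition of $cl_e$ precisely to rule out zeros accumulating on $\sigma_e(S)$.
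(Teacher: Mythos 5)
Your overall architecture is the same as the paper's: split $S=N_{\Delta_0}\oplus S_{\Delta_1}$ via \eqref{MDecompEq}, handle the normal summand by the Borel functional calculus, and settle the pure summand by combining Theorem \ref{theoremA} (equivalently Corollary \ref{corollaryA}) with Theorem \ref{InvInMA}; your bookkeeping observation that every $G\in\mathcal K\ominus\mathcal H$ is supported in $\Delta_1$, so that $E_S=E_{S_{\Delta_1}}$, is correct and is exactly what makes the reduction work.

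There is, however, one step whose justification would fail as written: the ``upgrade'' from the essential-infimum condition $\operatorname*{ess\,inf}_{E_S}|\rho_S(f)-\lambda|>0$ to the pointwise condition $\lambda\notin cl(\rho_S(f),E_S)$ via ``analytic continuation on $\mathcal R_S$.'' The set $\mathcal R_S$ defined in \eqref{RSDef} is a full-area subset of $E$ but need not be open (compare $K_1$ in Lemma \ref{K1Exist}, which has positive area and empty interior), and elements of $H^\i(\mathcal R_S)$ are only weak-star limits of functions analytic on neighborhoods of $\mathcal R_S$; so $\rho_S(f)$ has no analytic germ at a general point of $E_S$, and area density one (Proposition \ref{ESProp} (c)) does not substitute for an identity-theorem argument. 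The implication you need is nonetheless immediate from the paper's own tools: if $\lambda\notin\sigma(M_{S,f}),$ then by Theorem \ref{theoremA} (1) there is $g\in M_a(S)$ with $g(f-\lambda)=1,$ and for every $\mu_0\in E_S$ Proposition \ref{MAProp} (2) and (4) give $1=\phi_{\mu_0}(g)\,(\rho_S(f)(\mu_0)-\lambda)$ with $|\phi_{\mu_0}(g)|\le\|g\|,$ hence $|\rho_S(f)(\mu_0)-\lambda|\ge\|g\|^{-1}$ at \emph{every} point of $E_S,$ i.e. $\lambda\notin cl(\rho_S(f),E_S).$ The same multiplicative-functional bound repairs the analogous point in your essential-spectrum argument, where you need $|\rho_S(f_0)|$ bounded below pointwise (not merely $\area_{E_S}$-a.e.) on $E_S.$ With that substitution your Step 3 sketch --- analyticity of $\rho_S(f)$ on $\sigma(S)\setminus\sigma_e(S)$ via Corollary \ref{RInteriorCor}, finiteness of the zero set, and peeling off the polynomial as in the proof of Theorem \ref{theoremA} (2) --- is consistent with what the paper's (very terse) proof implicitly invokes.
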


\begin{proof}
Let $\Delta_0$ and $\Delta_1$ be as in \eqref{MDecompEq}. Then $f$ is invertible in $M(S)$ if and only if $f$ is invertible	 in $L^\i (\mu)$ and $f_{\Delta_1}$ is invertible	 in $M(S_{\Delta_1}).$ Applying Corollary \ref{corollaryA} and Theorem \ref{InvInMA}, we prove the corollary.
\end{proof}

\section{\textbf{Spectral Mapping Theorems for Irreducible Subnormal Operators}}

Notice that the construction of $S$ in the proof of Corollary \ref{ExampleCor} indicates that for $S$ and $f$ in the corollary, $S = \oplus_{n=1}^\i S_n,$ where $S_n$ is irreducible, and \eqref{DudziakEq1Fail} holds. Our next result shows this phenomenon disappears when $S$ is irreducible. As a result, we show that the second open question in \cite[page 386]{dud84} has an affirmative answer for irreducible subnormal operators.
In fact, Theorem \ref{theoremC} also holds for $S = \oplus_{n=1}^m S_n,$ where $m < \i$ and $S_n$ is irreducible.

\begin{theorem} \label{theoremC}
Let $S\in \mathcal L(\mathcal H)$ be an irreducible subnormal operator and let $\mu$ be the scalar-valued spectral measure for $N=mne(S).$ Then for $f\in R^\i(\sigma (S), \mu),$ the following properties hold:

(1) $M_{S,f}$ is invertible if and only if $f$ is invertible in $R^\i(\sigma (S), \mu).$

(2) $M_{S,f}$ is Fredholm if and only if $f = p f_0$, where $p$ is a polynomial with only zeros $z_1,z_2,...,z_m\in \sigma(S)\setminus \sigma_e(S)$, multiplicity of $z_i$ is $n_i$, and $f_0$ is invertible in $R^\i(\sigma (S), \mu).$
In this case,
 \[
 \ \text{ind}(M_{S,f}) = \sum_{i=1}^m n_i \text{ind}(S-z_i).
 \]	
\end{theorem}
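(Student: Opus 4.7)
The plan is to combine Theorem \ref{theoremA}, Theorem \ref{InvInMA}, and Theorem \ref{InvTheoremY23} to reduce Theorem \ref{theoremC} to a single claim about the envelopes, and then to use irreducibility in an essential way to settle that claim.

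I first address (1). An irreducible $S$ has no non-trivial reducing subspace, hence is pure, so Theorem \ref{theoremA}(1) applies and $M_{S,f}$ is invertible iff $f$ is invertible in $M_a(S)$; by Theorem \ref{InvInMA} this holds iff there exists $\epsilon>0$ with $|\rho_S(f)|\ge\epsilon$ holding $\area_{E_S}$-a.e. On the other side, Theorem \ref{InvTheoremY23} together with the trivial converse supplied by Chaumat's theorem applied to $R^\infty(\sigma(S),\mu)$ (if $fg=1$ in $R^\infty(\sigma(S),\mu)$ then $\rho_S(f)\rho_S(g)=1$ holds $\area_{E_S^\infty}$-a.e.) shows $f$ is invertible in $R^\infty(\sigma(S),\mu)$ iff $|\rho_S(f)|\ge\epsilon'$ holds $\area_{E_S^\infty}$-a.e. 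Since $R^\infty(\sigma(S),\mu)\subset M(S)$, one direction of (1) is immediate, and (1) reduces to the Key Claim: for irreducible $S$, $\area(E_S^\infty\setminus E_S)=0$.

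I would attack the Key Claim by contradiction. Suppose $F:=E_S^\infty\setminus E_S$ has positive area. Both $E_S^\infty$ and $E_S$ are area-measurable (Propositions \ref{EProp}(b) and \ref{ESProp}(b)), so by the Lebesgue density theorem there is $\lambda_0\in F$ at which $F$ has area density one. Choose a witness $g\in R^\infty(\sigma(S),\mu)^\perp$ with $\widetilde{g\mu}(\lambda_0)<\infty$ and $\CT(g\mu)(\lambda_0)\ne 0$; since $\lambda_0\notin E_S$, no single $\mu_{H,G}$ satisfies both at $\lambda_0$. A Hahn--Banach argument identifies $M(S)^\perp$ with the $L^1(\mu)$-closure of $\operatorname{span}\{\langle H(z),G(z)\rangle:H\in\mathcal H,\,G\in\mathcal K\ominus\mathcal H\}$: any $\phi\in L^\infty(\mu)$ annihilating the span satisfies $(\phi(N)H,G)=0$ for every $H\in\mathcal H,\,G\in\mathcal K\ominus\mathcal H$, so $\phi(N)\mathcal H\subset\mathcal H$ and $\phi\in M(S)$. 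Consequently $g\notin M(S)^\perp$, forcing $R^\infty(\sigma(S),\mu)\subsetneqq M(S)$. The plan is then to convert this strict inclusion, together with the positive-area concentration of $F$ at $\lambda_0$, into a non-trivial idempotent in $M(S)$---a Borel set $B\subset\operatorname{spt}\mu$ with $\chi_B\in M(S)\setminus R^\infty(\sigma(S),\mu)$. Since such a $\chi_B$ is a projection commuting with $S$, it yields a non-trivial reducing subspace for $S$, contradicting irreducibility. Tolsa's capacity estimates \eqref{GammaEqualGammaP} and \eqref{SAAC} should supply the quantitative localization needed to extract $B$ from a neighborhood of $\lambda_0$ of density one.

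Part (2) is then short. By Theorem \ref{theoremA}(2), $M_{S,f}$ is Fredholm iff $f=pf_0$ with $p$ a polynomial having zeros only in $\sigma(S)\setminus\sigma_e(S)$ and $f_0\in M_a(S)$ invertible in $M_a(S)$. The Key Claim combined with Chaumat's theorem and Theorem \ref{MAEqualREE} gives $R^\infty(\sigma(S),\mu)=M_a(S)$, so the factorization of $f\in R^\infty(\sigma(S),\mu)$ automatically has $f_0\in R^\infty(\sigma(S),\mu)$, and part (1) upgrades invertibility in $M(S)$ to invertibility in $R^\infty(\sigma(S),\mu)$. The index formula is inherited verbatim from Theorem \ref{theoremA}(2). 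The main obstacle throughout is the Key Claim: Section 2 shows it fails for countable direct sums of irreducibles, so irreducibility must be used exactly here, and translating the analytic/Cauchy-transform obstruction at $\lambda_0$ into an operator-theoretic reducing Borel subset of $\operatorname{spt}\mu$ is the delicate step with no direct parallel in Sections 3--4.
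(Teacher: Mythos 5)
Your reduction of both parts to a single bridge between $E_S$ and $E_S^\infty$, and your instinct that irreducibility must enter by producing a non-trivial idempotent in $M(S)$ (hence a reducing projection), are exactly right and mirror the paper's strategy at a high level. But the bridge you propose --- the Key Claim $\area(E_S^\infty\setminus E_S)=0$ --- is precisely where the whole proof lives, and your treatment of it is a gap rather than an argument. The paragraph beginning ``I would attack the Key Claim by contradiction'' never actually constructs the Borel set $B$ with $\chi_B\in M(S)$; the sentence saying that Tolsa's estimates \eqref{GammaEqualGammaP} and \eqref{SAAC} ``should supply the quantitative localization needed to extract $B$'' is the entire content of the hardest step. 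Moreover, the data you have at $\lambda_0$ (area density one of $F$, i.e.\ area density zero of $E_S$) is measure-theoretic, whereas any idempotent construction of the Vitushkin type requires capacitary smallness of the complement of the removable set on many small discs; since sets of zero area can have positive analytic capacity, area density zero of $E_S$ near $\lambda_0$ does not give you that hypothesis. It is not even clear the Key Claim is true: the paper never asserts it, and nothing in the paper implies it.

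What the paper actually proves is a capacitary, not an area, statement: Theorem \ref{IrreducibleForS} shows that for irreducible $S$ and \emph{every} $\lambda\in\sigma(S)$ one has $\underline{\lim}_{\delta\to 0}\,\gamma(\D(\lambda,\delta)\cap\mathcal R_S)/\delta>0$. Its proof is where your idempotent idea is implemented: assuming this density vanishes at some $\lambda$, Thomson's coloring scheme (Lemma \ref{LightRoute}) produces the regions $G_n$, and a Vitushkin scheme with coefficient-matching functions $g_{ij},h_{ij}$ built from Tolsa's measures yields a non-trivial $\chi_\Delta\in M_a(S)$, contradicting irreducibility. Then Lemma \ref{InvLemma} transfers the bound $|\rho_S(f)|\ge\epsilon_f$ from $\mathcal R_S$ (essentially $E_S$) to the removable set $\mathcal R_\infty$ of $R^\infty(\sigma(S),\mu)$ using the $\gamma$-continuity of $\rho_S(f)$ together with the positive lower $\gamma$-density and the semiadditivity \eqref{SAAC}; Theorem \ref{InvTheoremY23} then gives invertibility in $R^\infty(\sigma(S),\mu)$. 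So the correct bridge is that the lower bound on $|\rho_S(f)|$ propagates pointwise from $E_S$ to $E_S^\infty$ through analytic capacity, not that the two envelopes coincide up to area-null sets. As written, your proposal omits the proof of its central claim, and that claim is stronger than (and not obviously implied by) what the argument actually needs.
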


Consequently, we have the following spectral mapping theorem for irreducible subnormal operators.

\begin{corollary} \label{corollaryC}
Let $S\in \mathcal L(\mathcal H)$ be an irreducible subnormal operator and let $\mu$ be the scalar-valued spectral measure for $N=mne(S).$ Then for $f\in R^\i(\sigma(S),\mu),$
\[
\ \sigma (M_{S,f}) = cl(\rho_S(f), E_S^\i)  
\]
and 
\[
\ \sigma_e (M_{S,f}) = cl_e(\rho_S(f), E_S^\i).
\]	
\end{corollary}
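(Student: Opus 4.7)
The plan is to derive both equalities from Theorem \ref{theoremC} together with the fact that each point of $E_S^\i$ corresponds to a weak-star continuous character on $R^\i(\sigma(S),\mu)$. For the spectrum, the chain of equivalences is: $\lambda \notin \sigma(M_{S,f})$ iff $M_{S,f-\lambda}$ is invertible iff, by Theorem \ref{theoremC}(1), $f-\lambda$ is invertible in $R^\i(\sigma(S),\mu)$. If $g = (f-\lambda)^{-1}$ in $R^\i(\sigma(S),\mu)$, then for each $z \in E_S^\i$ the evaluation $\phi_z$ is a weak-star continuous multiplicative linear functional of norm at most one by Proposition \ref{EProp}(a), and multiplicativity gives $(\rho_S(f)(z)-\lambda)\rho_S(g)(z)=1$, so $|\rho_S(f)(z)-\lambda| \ge 1/\|g\|_\i$ uniformly on $E_S^\i$. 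Hence $\lambda \notin cl(\rho_S(f),E_S^\i)$. Conversely, $\lambda \notin cl(\rho_S(f),E_S^\i)$ supplies $\epsilon > 0$ with $|\rho_S(f) - \lambda| \ge \epsilon$ everywhere on $E_S^\i$, in particular $\area_{E_S^\i}$-a.a., so Theorem \ref{InvTheoremY23} returns invertibility of $f-\lambda$ in $R^\i(\sigma(S),\mu)$. This proves $\sigma(M_{S,f}) = cl(\rho_S(f),E_S^\i)$.

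For the essential spectrum, Theorem \ref{theoremC}(2) reduces the equality to showing that $\lambda \notin cl_e(\rho_S(f),E_S^\i)$ is equivalent to the existence of a factorization $f - \lambda = p f_0$ with $p$ a polynomial whose zeros lie in $\sigma(S)\setminus\sigma_e(S)$ and $f_0$ invertible in $R^\i(\sigma(S),\mu)$. The easy direction comes from $\rho_S(f) - \lambda = p \cdot \rho_S(f_0)$: for any sequence $\lambda_n \in E_S^\i$ with $\lambda_n \to z \in \sigma_e(S)$, the $\lambda_n$ eventually avoid any neighborhood of the zeros of $p$ (which sit in the open set $\sigma(S)\setminus\sigma_e(S)$), making $|p(\lambda_n)|$ bounded below, while the pointwise bound from the spectrum argument gives $|\rho_S(f_0)| \ge c > 0$ uniformly on $E_S^\i$. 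Hence $\rho_S(f)(\lambda_n)$ stays bounded away from $\lambda$.

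For the reverse direction, assume $\lambda \notin cl_e(\rho_S(f),E_S^\i)$ and choose an open $U \supset \sigma_e(S)$ and $\epsilon > 0$ with $|\rho_S(f)(z)-\lambda| \ge \epsilon$ on $E_S^\i \cap U$. Corollary \ref{RInteriorCor} combined with the identification $R^\i(\overline{E_S^\i},\area_{E_S^\i}) = H^\i(\mathcal R_S)$ from \eqref{REEEqualsHIR} shows that $\rho_S(f)$ is bounded analytic on the open set $\mathcal R_S \supset \sigma(S)\setminus\sigma_e(S)$. The zeros of $\rho_S(f)-\lambda$ in $\mathcal R_S$ are then confined to the compact set $\sigma(S)\setminus U$, and no whole component of $\sigma(S)\setminus\sigma_e(S)$ can be a zero set (otherwise $\lambda$ would be a cluster value along any sequence approaching that component's boundary, contradicting $\lambda \notin cl_e$). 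By analyticity these zeros form a finite set $z_1,\dots,z_m$ with multiplicities $n_1,\dots,n_m$; set $p(z) = \prod_{i=1}^{m}(z-z_i)^{n_i}$. Then $h := (\rho_S(f)-\lambda)/p$ is bounded analytic on $\mathcal R_S$ with $|h|$ uniformly bounded away from zero, so $h \in H^\i(\mathcal R_S) = R^\i(\overline{E_S^\i},\area_{E_S^\i})$. Pulling back through Theorem \ref{MAEqualREE} gives $f_0 \in R^\i(\sigma(S),\mu)$ with $\rho_S(f_0) = h$; the identity $\rho_S(p f_0) = p h = \rho_S(f-\lambda)$ forces $f - \lambda = p f_0$ in $L^\i(\mu)$, and $f_0$ is invertible in $R^\i(\sigma(S),\mu)$ by Theorem \ref{InvTheoremY23}. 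Theorem \ref{theoremC}(2) then delivers $\lambda \notin \sigma_e(M_{S,f})$.

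The main obstacle is this converse for the essential spectrum: identifying the correct polynomial $p$ and verifying that the quotient $h = (\rho_S(f)-\lambda)/p$ lies in $R^\i(\overline{E_S^\i},\area_{E_S^\i})$ after the removable singularities at the zeros of $p$ are resolved. Both steps rest on the analyticity of $\rho_S(f)$ on $\sigma(S)\setminus\sigma_e(S)$ supplied by Corollary \ref{RInteriorCor} and on the structural identification of $R^\i(\overline{E_S^\i},\area_{E_S^\i})$ with $H^\i(\mathcal R_S)$ extracted from the proof of Theorem \ref{InvInMA}.
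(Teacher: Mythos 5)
Your overall architecture is the right one and is evidently what the paper intends (the paper offers no written proof beyond \quotes{Consequently}): reduce both equalities to Theorem \ref{theoremC}. Your treatment of the first equality is correct --- Theorem \ref{theoremC}(1) plus the norm-one multiplicative functionals at points of $E_S^\i$ in one direction, Theorem \ref{InvTheoremY23} in the other --- and so is the easy implication for the essential spectrum (factorization implies $\lambda\notin cl_e$).

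There is, however, a genuine gap in your converse direction for $\sigma_e$. You invoke the identification $R^\i(\overline{E_S^\i},\area_{E_S^\i}) = H^\i(\mathcal R_S)$, attributing it to \eqref{REEEqualsHIR}. What \eqref{REEEqualsHIR} actually provides is $R^\i(\overline{E},E) = H^\i(\mathcal R_S)$ with $E\approx E_S$, i.e., the image under $\rho_S$ of the \emph{larger} algebra $M_a(S)$; the image of $R^\i(\sigma(S),\mu)$ is $R^\i(\overline{E_S^\i},\area_{E_S^\i})$, which in general is a proper subalgebra of $H^\i(\mathcal R_S)$ --- this is precisely the distinction Example \ref{ESExample} exhibits, where $M_a(S_{W\area}) = \oplus_n H^\i(\mathrm{int}(A_n)) \supsetneqq R^\i(K,W\area)$. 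Equality for irreducible $S$ is nowhere established in the paper. Consequently, pulling $h=(\rho_S(f)-\lambda)/p$ back through Theorem \ref{MAEqualREE} only produces $f_0\in M_a(S)$, not $f_0\in R^\i(\sigma(S),\mu)$, so Theorem \ref{theoremC}(2) cannot be invoked as you state it. The step is repairable with the paper's own tools: either conclude Fredholmness of $M_{S,f-\lambda}$ directly from Theorem \ref{theoremA}(2), which only requires the factorization inside $M_a(S)$; or argue separately that dividing $f-\lambda$ by $(z-z_i)$ at a zero $z_i\in\sigma(S)\setminus\sigma_e(S)$ of $\rho_S(f)-\lambda$ stays inside $R^\i(\sigma(S),\mu)$ (take weak-star limits of $(r_n(z)-r_n(z_i))/(z-z_i)$, with the uniform bound coming from the fact that area-a.e. point of a disk about $z_i$ lies in $E_S^\i$, so $|r_n|\le\|r_n\|_{L^\i(\mu)}$ on that disk). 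A smaller cosmetic point: $\mathcal R_S$ is not open and contains $\sigma(S)\setminus\sigma_e(S)$ only up to a $\gamma$-null set (Corollary \ref{RInteriorCor}); the analyticity of $\rho_S(f)$ on $\sigma(S)\setminus\sigma_e(S)$ is nonetheless legitimate, since $f\in R^\i(\sigma(S),\mu)\subset M_a(S)$ and the paper uses exactly this in the proof of Theorem \ref{theoremA}(2).
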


The following is an important property for an irreducible subnormal operator. Recall that the removable set $\mathcal R_S$ is defined as in \eqref{RSDef}. Set $\mathcal F_S = \C \setminus \mathcal R_S.$

\begin{theorem}\label{IrreducibleForS} 
Let $S\in \mathcal L(\mathcal H)$ be an irreducible subnormal operator and let $\mu$ be the scalar-valued spectral measure for $N=mne(S).$ Then for $\lambda \in \sigma (S),$
 \[
 \ \underset{\delta \rightarrow 0}{\underline{\lim}}\dfrac{\gamma(\D(\lambda, \delta)\setminus \mathcal F_S)}{\delta} = \underset{\delta \rightarrow 0}{\underline{\lim}}\dfrac{\gamma(\D(\lambda, \delta)\cap \mathcal R_S)}{\delta} > 0.
 \] 
\end{theorem}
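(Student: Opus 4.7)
The two liminfs in the statement coincide tautologically because $\mathcal F_S = \C \setminus \mathcal R_S$ forces $\D(\lambda,\delta) \setminus \mathcal F_S = \D(\lambda,\delta) \cap \mathcal R_S$; only the strict positivity of their common value carries substance. I would prove this by contradiction. Suppose $\lambda_0 \in \sigma(S)$ and $\delta_n \downarrow 0$ satisfy $\gamma(\D(\lambda_0, \delta_n) \cap \mathcal R_S) = o(\delta_n)$. The goal is to produce a real-valued $\varphi \in M(S)$ with $0 \le \varphi \le 1$ such that both $\mu\{\varphi > 1/2\}$ and $\mu\{\varphi < 1/2\}$ are positive. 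Because $M(S)$ is a weak-$*$ closed algebra, bounded Borel functions of its elements remain inside $M(S)$; in particular $\chi_B := \chi_{\{\varphi > 1/2\}} \in M(S)$. The spectral projection $\chi_B(N)$ then commutes with $N$, leaves $\mathcal H$ invariant, and its range is a nontrivial reducing subspace of $S = N|_{\mathcal H}$, contradicting irreducibility.

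As a preliminary I would localize $\lambda_0$. From the proof of Theorem \ref{InvInMA}, $\mathcal R_S$ agrees with the envelope $E$ of $R^\infty(\overline E_S, E_S)$ up to $\area$-null, while Proposition \ref{ESProp}(c) gives that $E_S \subset E$ has $\area$-density one at each of its points. Combined with \eqref{AreaGammaEq}, which yields $\gamma(A)^2 \gtrsim \area(A)$, this forces $\gamma(\D(\lambda_0,\delta) \cap \mathcal R_S)/\delta$ to stay bounded below whenever $\lambda_0$ is an $\area$-density-one point of $\mathcal R_S$. The contradictory hypothesis therefore places $\lambda_0$ outside $\mathcal R_S$ in a strong capacity sense, and Corollary \ref{RInteriorCor} then gives $\lambda_0 \in \sigma_e(S) \cap \text{spt}\,\mu$.

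The crux is a Vitushkin-Tolsa construction of the desired $\varphi$. Fix $\epsilon \in (0,1/2)$ and choose $n$ large enough that $\gamma(\D(\lambda_0, 2\delta_n) \cap \mathcal R_S) < \epsilon \delta_n$. Let $\phi$ be a smooth cut-off equal to $1$ on $\D(\lambda_0, \delta_n/2)$, supported in $\D(\lambda_0, 2\delta_n)$, with $\|\bar\partial \phi\|_\infty \lesssim 1/\delta_n$. For every $H \in \mathcal H$ and $G \in \mathcal K \ominus \mathcal H$ the measure $\mu_{H,G}$ annihilates $R^\infty(\sigma(S),\mu) \supset R(\overline E_S, E_S)$, so $\CT\mu_{H,G}$ vanishes off $\mathcal R_S$ in the capacity sense; the Cauchy-Pompeiu identity then rewrites the orthogonality defect $\int \phi\, d\mu_{H,G}$ as a pairing of $\CT\mu_{H,G}$ against $\bar\partial \phi$ essentially localized on $\D(\lambda_0, 2\delta_n) \cap \mathcal R_S$. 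Tolsa's theorems \eqref{GammaEqualGammaP} and \eqref{SAAC} let this defect be removed by a single correction $c$ with $\|c\|_\infty = O(\gamma(\D(\lambda_0, 2\delta_n) \cap \mathcal R_S)/\delta_n) < C\epsilon$, uniformly for all such $H, G$. After truncation into $[0,1]$ (which preserves membership in $M(S)$ by the Borel functional calculus discussed above), $\varphi = \phi + c$ exceeds $1/2$ on $\D(\lambda_0, \delta_n/2) \cap \text{spt}\,\mu$, which has positive $\mu$-mass since $\lambda_0 \in \text{spt}\,\mu$, and is below $1/2$ off $\D(\lambda_0, 2\delta_n) \cap \text{spt}\,\mu$, also of positive $\mu$-mass for $n$ large. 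The main obstacle is exactly this construction of $c$: obtaining an honest element of $M(S)$ orthogonal to every $\mu_{H,G}$ simultaneously (rather than merely approximately) with an $L^\infty$ bound driven solely by the $\gamma$-capacity of $\D(\lambda_0, 2\delta_n) \cap \mathcal R_S$. The uniform description of the defect through $\CT\mu_{H,G}$, together with the measure-theoretic form \eqref{GammaEqualGammaP} and the semi-additivity \eqref{SAAC}, are what make a single capacity-controlled correction work against the entire uncountable family $\{\mu_{H,G}\}$.
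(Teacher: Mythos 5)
Your overall strategy --- argue by contradiction and manufacture a non-trivial ``projection'' in $M(S)$, hence a non-trivial reducing subspace for $S$ --- is exactly the paper's, and the first half of your construction can in fact be made rigorous: taking $c=-\frac{1}{\pi}\CT\bigl(\bar\partial\phi\,\chi_{\mathcal R_S}\area\bigr)$, the identities $\int\phi\,d\mu_{H,G}=-\frac1\pi\int\bar\partial\phi\,\CT\mu_{H,G}\,d\area$ and $\CT\mu_{H,G}=0$ $\area$-a.e.\ off $\mathcal R_S$ give $\int(\phi+c)\,d\mu_{H,G}=0$ for all $H,G$, so $\varphi=\phi+c\in M(S)$, and $\|c\|_\infty\lesssim\delta_n^{-1}\gamma(\D(\lambda_0,2\delta_n)\cap\mathcal R_S)<C\epsilon$ via \eqref{AreaGammaEq}. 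The fatal gap is the last step. The function $\varphi$ is complex-valued (a Cauchy transform is not real), and $M(S)$ is only a weak-star closed subalgebra of $L^\i(\mu)$, not a $*$-algebra: neither $\mathrm{Re}\,\varphi$, nor a truncation of $\varphi$, nor $\chi_{\{\varphi>1/2\}}$ need belong to $M(S)$, so the spectral projection $\chi_B(N)$ you invoke has no reason to leave $\mathcal H$ invariant. (For the unilateral shift, $z\in M(S)=H^\i$ but $\chi_{\{\mathrm{Re}\,z>0\}}\notin H^\i$.) Nor can you substitute the Riesz holomorphic functional calculus: $\phi$ necessarily takes every value in $[0,1]$ on the transition annulus, $\mu$ may charge that annulus, and then the essential range of $\varphi$ is a connected set passing through a neighborhood of $1/2$, so there is no idempotent to extract. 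An approximate idempotent at a single scale is simply not enough.

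The paper's proof is designed precisely to avoid this. It bootstraps the one-scale smallness $\gamma(\D(\lambda,\delta_0)\setminus\mathcal F_S)<\epsilon_0\delta_0$ into all-scale smallness along a curve, via Thomson's coloring scheme (Lemma \ref{LightRoute}), producing nested simply connected regions $G_n$ whose boundaries stay in squares where $\gamma(\D(\cdot,2^{-n})\cap\mathcal R_S)<\epsilon 2^{-n}$. It then runs the full Vitushkin scheme on the sharp indicators $\chi_{G_n}$ --- corrections $g_{ij},h_{ij}$ built from Tolsa measures supported on $\mathcal F_S$ (hence analytic on $\mathcal R_S$), matching two Laurent coefficients, with $O(1)$ (not small) sup norms but cubic decay --- so that the weak-star limit $\hat f$ equals $\chi_G$ off $\partial G$ and, by a $\gamma$-continuity argument on $\partial G$, is an \emph{exact} idempotent $\chi_{\mathcal R_S\cap\overline G}$ in $H^\i(\mathcal R_S)=R^\i(\overline E_S,E_S)$. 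Pulling back through the algebra isomorphism $\rho_S$ of Theorem \ref{MAEqualREE} yields a genuine non-trivial characteristic function $\chi_\Delta\in M_a(S)\subset M(S)$, and only then does irreducibility give the contradiction. You would need some analogue of this multi-scale construction (or another mechanism for producing an exact, not approximate, idempotent in $M(S)$) to close your argument.
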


To prove Theorem \ref{IrreducibleForS}, we need the following lemma, which combines Thomson's coloring scheme (see \cite{thomson} and \cite[Section 2]{y19}) and \cite[Lemma 2]{y19}.

\begin{lemma} \label{LightRoute}
For $\epsilon > 0$, there exists $\epsilon_0 > 0$ (depending on $\epsilon$) such that if a subset $E \subset \mathbb D$ and $\gamma (\mathbb D \setminus E) < \epsilon_0$,
then there is a sequence of simply connected regions $\{G_n\}_{n \ge m}$ for some integer $m \ge 1$ satisfying:
\[
\ \mathbb D \left (0, \frac 12 \right ) \subset G_n \subset G_{n+1} \subset \mathbb D 
\]
and for $\lambda\in \partial G_n,$ we have
\[
\ \mathbb D (\lambda, n^2 2^{-n}) \subset G_{n+1} \text{ and }\gamma (\mathbb D (\lambda, 2^{-n}) \setminus E) < \epsilon 2^{-n}.
\]
\end{lemma}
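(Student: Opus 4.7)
My plan is to implement a Thomson-style coloring argument combined with Tolsa's semi-additivity of analytic capacity \eqref{SAAC} to carve out the nested simply connected regions $G_n$. The construction has two sides: a measure-theoretic side, which classifies squares by how much analytic capacity $E$ misses inside them and then bounds the ``heavy'' part, and a topological side, which assembles and fills in light regions to obtain simply connected $G_n$'s with the correct buffer geometry.

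\textbf{Step 1 (Dyadic coloring and heavy-square counting).} For each $k \ge 1$ I partition $\C$ into an axis-aligned dyadic grid $\mathcal Q_k$ of closed squares of side $2^{-k}$, and for $Q\in\mathcal Q_k$ let $Q^*$ denote its $3Q$ concentric enlargement. Set $\eta=\epsilon/C_1$ for a constant $C_1$ to be chosen. Call $Q\in\mathcal Q_k$ \emph{$k$-heavy} if $\gamma(Q^*\setminus E)\ge \eta\,2^{-k}$, otherwise \emph{$k$-light}. By Tolsa's inequality \eqref{SAAC} applied to the (boundedly overlapping) enlargements intersecting $\D$,
\[
\sum_{Q\in\mathcal Q_k\ k\text{-heavy},\,Q\cap\D\ne\emptyset}\gamma(Q^*\setminus E)\le C_T\,\gamma(\D\setminus E)<C_T\,\epsilon_0,
\]
so the number of $k$-heavy squares meeting $\D$ is at most $N_k=O(\epsilon_0\,2^k/\epsilon)$. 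Let $H_k$ be the union of these $k$-heavy squares.

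\textbf{Step 2 (Inflation and construction of $G_n$).} Set the inflation $\tilde H_k=\{z\in\D:\operatorname{dist}(z,H_k)\le (k+1)^3\,2^{-k}\}$ and the shrinking radius $\rho_n=\sum_{k\ge n+1}(k+1)^4\,2^{-k}$, which is the tail of a convergent series. Choose $m$ large enough (depending on $\epsilon_0$) so that $\mathbb D(0,\tfrac12)\subset \D(0,1-\rho_n)$ and $\mathbb D(0,\tfrac12)\cap \tilde H_k=\emptyset$ for all $n\ge m$, $k\ge n+1$. Define $G_n^\circ$ to be the connected component of $\D(0,1-\rho_n)\setminus\bigcup_{k\ge n+1}\tilde H_k$ that contains $\mathbb D(0,\tfrac12)$, and let $G_n$ be obtained from $G_n^\circ$ by filling in all bounded complementary components inside $\D$; then $G_n$ is open, simply connected, and $\mathbb D(0,\tfrac12)\subset G_n\subset\D$.

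\textbf{Step 3 (Verification of the two boundary conditions).} Nesting $G_n\subset G_{n+1}$ is immediate because $\rho_{n+1}<\rho_n$ and fewer $\tilde H_k$'s are removed at level $n+1$. For $\lambda\in\partial G_n$, either $|\lambda|=1-\rho_n$ or $\lambda$ lies on $\partial \tilde H_k$ for some $k\ge n+1$ (filled-in ``hole'' boundaries lie in one of these two classes by construction). In both cases the gap between successive radii and inflation levels, tuned by the $(k+1)^3$ cushion and the $(k+1)^4$ tail in $\rho_n$, forces $\D(\lambda,n^2 2^{-n})\subset \D(0,1-\rho_{n+1})\setminus\bigcup_{k\ge n+2}\tilde H_k$ and furthermore keeps the enlarged disk in the same connected component as $\mathbb D(0,\tfrac12)$; hence it lies in $G_{n+1}$. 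For the capacity estimate, $\D(\lambda,2^{-n})$ meets only $O(1)$ squares of $\mathcal Q_n$, and all such squares must be $n$-light: indeed if one were $n$-heavy, it would sit in $H_n\subset \tilde H_n$, and the cushion $(n+1)^3 2^{-n}\gg 2^{-n}$ would put $\lambda$ deep inside $\tilde H_n$, contradicting $\lambda\in G_{n+1}$. Hence, using \eqref{SAAC} once more,
\[
\gamma(\D(\lambda,2^{-n})\setminus E)\le C_T\sum_{Q}\gamma(Q^*\setminus E)\le C_T\cdot O(1)\cdot \eta\,2^{-n}<\epsilon\,2^{-n}
\]
provided $\eta=\epsilon/C_1$ with $C_1$ large enough, and $\epsilon_0$ is then fixed small enough (depending on $\epsilon$, $m$) so that the tail and inflation accounting in Step 2 are valid.

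\textbf{Main obstacle.} The real delicacy is coupling the topological and capacity parts: hole-filling to enforce simple connectedness could in principle create new boundary points which are not controlled by the coloring. The remedy, which is the key technical point, is that every bounded complementary component of $G_n^\circ$ in $\D$ is itself bounded by a union of $\partial \tilde H_k$'s, so its filled-in version does not introduce boundary points closer to heavy squares than the original; the cushion $(k+1)^3 2^{-k}$ and the summable tail $\rho_n$ then simultaneously deliver the buffer inclusion $\D(\lambda,n^2 2^{-n})\subset G_{n+1}$ and the capacity bound at every $\lambda\in\partial G_n$. Once this matching of constants is set, the argument reduces to bookkeeping with Tolsa's semi-additivity and the geometric series in $2^{-k}$.
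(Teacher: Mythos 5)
There is a genuine gap, and it sits at the foundation of your Step 1. The inequality
\[
\sum_{Q\ k\text{-heavy}}\gamma(Q^*\setminus E)\le C_T\,\gamma(\mathbb D\setminus E)
\]
does not follow from \eqref{SAAC}: Tolsa's semiadditivity bounds the capacity of a \emph{union} by the \emph{sum} of the capacities of the pieces, which is the opposite direction from what you need. Worse, the superadditivity you are invoking is false for analytic capacity over a grid of boundedly overlapping squares: taking $E=\emptyset$ for illustration, the squares of side $2^{-k}$ meeting $\mathbb D$ satisfy $\sum_Q\gamma(Q^*)\approx 2^{2k}\cdot 2^{-k}=2^k\to\infty$ while $\gamma(\mathbb D)=1$. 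So the number of $k$-heavy squares cannot be bounded this way, and the entire construction of $H_k$, $\tilde H_k$, and $G_n$ collapses. This is precisely why Thomson's coloring scheme (which the paper cites, together with \cite[Lemma 2]{y19}, in lieu of a proof) is organized differently: one does not count heavy squares at all, but shows that a \emph{connected chain} of heavy squares of generation $k$ joining two points at distance $d$ forces $\gamma(\mathbb D\setminus E)\gtrsim \eta d$ (this is the content of the cited Lemma 2, proved by assembling a single admissible measure or Cauchy transform along the chain, where the geometric separation along the chain is what makes the contributions sum). Smallness of $\gamma(\mathbb D\setminus E)$ then forbids long heavy chains, and a percolation-type argument produces closed ``lassos'' of light squares around $\mathbb D(0,\frac12)$; the $G_n$ are the regions these lassos enclose.

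Two further problems would remain even if Step 1 were repaired. First, the buffer condition $\mathbb D(\lambda,n^2 2^{-n})\subset G_{n+1}$ is not delivered by your cushions: $G_{n+1}$ excludes $\tilde H_{k'}$ for all $k'\ge n+2$, and nothing prevents a $k'$-heavy square from lying within distance $n^2 2^{-n}$ of a point $\lambda\in\partial G_n$; the inflation $(k+1)^3 2^{-k}$ only separates $\partial\tilde H_k$ from $H_k$ for the \emph{same} $k$. Second, there is an off-by-one mismatch: $\partial G_n$ is kept away only from heavy squares of generations $\ge n+1$, while the capacity estimate $\gamma(\mathbb D(\lambda,2^{-n})\setminus E)<\epsilon 2^{-n}$ requires that the generation-$n$ squares near $\lambda$ be light; your attempted contradiction via ``$\lambda\in G_{n+1}$'' is vacuous because $G_{n+1}$ does not exclude $\tilde H_n$. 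The second issue is patchable by reindexing, but the first, like the counting problem, reflects the fact that the nested lassos with prescribed buffers must be built by the multi-scale chain argument of Thomson's scheme rather than by deleting inflated heavy squares.
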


Let $f$ be an analytic function outside the disc $\overline{\mathbb D(a, \delta)}$ satisfying the condition $f(\infty ) = 0.$ We consider the Laurent expansion of $f$ centered at $a$,
 \[
 \ f(z) = \sum_{n=1}^\infty \dfrac{c_n(f,a)}{(z-a)^n}.
 \]
$c_1(f) := c_1(f,a)$ does not depend on the choice of $a$, while $c_2 (f,a)$ depends on $a$. 

Let $\delta > 0$. We say that 
$\{\varphi_{ij},S_{ij}, \delta\}$ is a smooth partition of unity subordinated to $\{2S_{ij}\},$ 
 if the following assumptions hold:

(i)  $S_{ij}$ is a square with vertices   $(i\delta,j\delta),~((i+1)\delta,j\delta),~(i\delta,(j+1)\delta),$ and $((i+1)\delta,(j+1)\delta);$

(ii) $c_{ij}$ is the center of $S_{ij}$ and $\varphi_{ij}$ is a $C^\infty$ smooth function supported in $\mathbb D(c_{ij}, \delta) \subset 2S_{ij}$ and with values in $[0,1]$;

(iii) 
 \[
 \ \|\bar\partial \varphi_{ij} \| \lesssim \frac{1}{\delta},~ \sum \varphi_{ij} = 1.
 \]
(See \cite[VIII.7]{gamelin} for details).

Let  $\mathcal Q$ be a set with $\gamma(\mathcal Q) = 0$.
Let $f(z)$ be a function defined 
on $\D(\lambda, \delta_0)\setminus \mathcal Q$ for some $\delta_0 > 0.$ The function $f$ is $\gamma$-continuous at $\lambda$ if for all $\epsilon > 0,$
\begin{eqnarray} \label{GCDef}  
 \  \lim_{\delta \rightarrow 0} \dfrac{\gamma(\D(\lambda, \delta) \cap \{|f(z) - f(\lambda)| > \epsilon\})} {\delta}= 0.
\end{eqnarray}

\begin{proof} (Theorem \ref{IrreducibleForS}):
Fix $\epsilon < \frac{1}{36C_T},$ where $C_T$ is from \eqref{SAAC}. If there exists $\lambda \in \sigma(S)$ such that 
 \[
 \ \underset{\delta \rightarrow 0}{\underline{\lim}}\dfrac{\gamma(\D(\lambda, \delta)\setminus \mathcal F_S)}{\delta} = 0,
 \]
then there exists $\delta_0 > 0$ such that
 \begin{eqnarray}\label{lemma3Eq1}
 \ \gamma(\D(\lambda, \delta_0)\setminus \mathcal F_S) < \epsilon_0 \delta_0,  
 \end{eqnarray}
where $\epsilon$ and $\epsilon_0$ are as in Lemma \ref{LightRoute}, $\mu (\sigma(S) \setminus \overline{\D(\lambda, \delta_0)}) > 0,$ and by Corollary \ref{RInteriorCor}, we assume $\mu (\D(\lambda, \delta)) > 0$ for $\delta \le \delta_0.$ Without loss of generality, we assume that $\lambda = 0$ and $\delta_0 = 1$. Let $\{G_n\}_{n \ge m}$ be the sequence of simply connected regions as in Lemma \ref{LightRoute}. 

Let $f_n = \chi_{G_n}.$ Then $f_n \in H^\infty (\mathbb C \setminus \partial G_n)$. 
Let $\{\varphi_{ij},S_{ij}, 2^{-n}\}$ be a smooth partition of unity as above. Then
 \begin{eqnarray}\label{SVEq}
 \ f_n = \sum_{\mathbb D(c_{ij}, 2^{-n}) \cap \partial G_n \ne \emptyset} (f_{ij}:= T_{\varphi_{ij}}f_n) = \sum_{\mathbb D(c_{ij}, 2^{-n}) \cap \partial G_n \ne \emptyset} (f_{ij} - g_{ij} - h_{ij}) + \hat f_n. 
 \end{eqnarray}
where 
 \[
 \ \hat f_n = \sum_{\mathbb D(c_{ij}, 2^{-n}) \cap \partial G_n \ne \emptyset} (g_{ij} + h_{ij}).
 \]
To apply the standard Vitushkin scheme, we need to construct functions $g_{ij}, h_{ij} \in H^\infty(\mathcal R_S)$ that are bounded and analytic off $3S_{ij}$ and the following properties hold:
 \begin{eqnarray}\label{gProperty}
 \ \|g_{ij}\|_{L^\infty(\area_{\mathcal R_S})} \lesssim 1, ~ g_{ij}(\infty) = 0, ~ c_1(g_{ij}) = c_1(f_{ij})
\end{eqnarray}
and
\begin{eqnarray}\label{hProperty}
 \ \begin{aligned}
 \ & \|h_{ij}\|_{L^\infty(\area_{\mathcal R_S})} \lesssim 1, ~ h_{ij}(\infty) = c_1(h_{ij}) = 0, \\
 \ & c_2(h_{ij}, c_{ij}) = c_2(f_{ij} - g_{ij}, c_{ij}).
 \ \end{aligned}
\end{eqnarray}

Assuming $g_{ij}, h_{ij}$ have been constructed and satisfy \eqref{gProperty} and \eqref{hProperty}, we get 
 \[
 \ |f_{ij}(z) - g_{ij}(z) - h_{ij}(z) | \lesssim \min \left (1, \dfrac{\frac{1}{2^{3n}}}{|z - c_{ij}|^3} \right ).
 \]
Therefore, using
the standard Vitushkin scheme, we see that there exists $\hat f\in H^\infty(\mathcal R_S)$ and a subsequence $\{\hat f_{n_k}\} \subset H^\infty(\mathcal R_S)$ such that $\hat f_{n_k}$ converges to $\hat f$ in weak-star topology in $L^\infty (\area_{\mathcal R_S} )$. Clearly, $\hat f(z) = \chi_G$ for $z\in \mathbb C \setminus \partial G$, where $G= \cup G_n$ is a simply connected region.
For $\lambda \in \mathcal R_S \cap \partial G,~\gamma-a.a.,$ $\underset{\delta\rightarrow 0}{\underline \lim}\frac{\gamma(\D(\lambda, \delta)\cap G)}{\delta} > 0$ since $G$ is connected. 
Using \eqref{SAAC} and the fact that $\lim_{\delta\rightarrow 0} \frac{\gamma(\D(\lambda, \delta)\setminus \mathcal R_S)}{\delta} = 0$ by \cite[Corollary 5.7]{y23}, we get
\[
\ \underset{\delta\rightarrow 0}{\underline{\lim}} \dfrac{\gamma(\D(\lambda, \delta)\cap G\cap \mathcal R_S)}{\delta} \ge \dfrac{1}{C_T} \underset{\delta\rightarrow 0}{\underline \lim}\frac{\gamma(\D(\lambda, \delta)\cap G)}{\delta} - \lim_{\delta\rightarrow 0} \dfrac{\gamma(\D(\lambda, \delta)\setminus \mathcal R_S)}{\delta} > 0.
\]
Using \cite[Lemma 7.3 (1)]{y23}, we see that $\hat f(z)$ is $\gamma$-continuous at $\lambda \in \mathcal R_S \cap \partial G,~\gamma-a.a.$ (see \eqref{GCDef}) and conclude that
\[
\ \hat f(z) = \chi_{\mathcal R_S \cap \overline G},~\area_{\mathcal R_S}-a.a..
\]
By Theorem \ref{MAEqualREE}, there exists $\Delta$ with $G \subset \Delta \subset \overline G$ such that $\chi_{\Delta} \in M_a(S)$ and $\hat f = \rho_S(\chi_{\Delta})$. Since $\mu (\D(\lambda, \delta)) > 0$ for $\delta \le \delta_0,$ $\chi_{\Delta}$ is a non-trivial characteristic function, which contradicts the assumption that $S$ is irreducible.  

It remains to construct $g_{ij}, h_{ij}$ satisfying \eqref{gProperty} and \eqref{hProperty}. For $l = 2^{-n}$ and $ u\in \overline {\mathbb D(c_{ij}, l)}\cap \partial G_n \ne \emptyset$, let $V$ be the square centered at $u$ whose edges are parallel to the coordinate axes and length equals $l.$ Then $V\subset 3S_{ij}$ such that, from Lemma \ref{LightRoute}, we have
 \[
 \ \gamma(V \setminus \mathcal F_S) < \epsilon l .
 \]
We divide $V$ into 9 equal small squares. Let $V_d$ (with center $c_d$)and $V_u$ (with center $c_u$) be the left bottom and upper squares, respectively. Then, using \eqref{SAAC}, we conclude that
 \[
 \ \gamma (V_d\cap \mathcal F_S) \ge \dfrac{1}{C_T} \gamma(V_d) - \gamma (V_d\setminus \mathcal F_S) \ge \dfrac{1-12C_T\epsilon}{12C_T}l \ge \dfrac{1}{18C_T}l. 
 \]
Similarly, $\gamma (V_u\cap \mathcal F_S) \ge \frac{1}{18C_T}l$.
Using \eqref{GammaEqualGammaP},  we find $\eta_d\in M_0^+(V_d\cap \mathcal F_S)$ and $\eta_u\in M_0^+(V_u\cap \mathcal F_S)$ such that $\|\eta_d\|= \gamma (V_d\cap \mathcal F_S),$ $\|\eta_u\|= \gamma (V_u\cap \mathcal F_S),$ $\|h_d:= \CT \eta_d \| \lesssim 1,$ and  $\|h_u:= \CT \eta_u \| \lesssim 1.$  Let
 \[
 \ h_0 = \dfrac{\|\eta_u\|}{l}h_d - \dfrac{\|\eta_d\|}{l}h_u. 
 \]
Then $c_1 (h_0) = 0$ and
 \[
 \ \begin{aligned}
 \  |c_2(h_0, c_{ij}) | = & \dfrac{1}{l} \left | \int\int (w-z)d\eta_u(w)d\eta_d(z)\right | \\
 \ \ge & \dfrac{1}{l} \int \int Im(w-z)d\eta_u(w)d\eta_d(z) \\
 \ \ge & \dfrac{l^2}{972C_T^2}. 
 \end{aligned}
 \]
Set 
 \[
 \ g_{ij} = \dfrac{h_d}{c_1(h_d)} c_1(f_{ij}) \text{ and } h_{ij} = \dfrac{h_0}{c_2(h_0, c_{ij})} c_2(f_{ij} - g_{ij}, c_{ij}). 
 \]
It is easy to verify that $g_{ij}$ and $h_{ij}$ satisfy \eqref{gProperty} and \eqref{hProperty}.	
\end{proof}

\begin{lemma}\label{InvLemma}
Let $S\in \mathcal L(\mathcal H)$ be an irreducible subnormal operator, let $\mu$ be the scalar-valued spectral measure for $N=mne(S),$ and let $f\in R^\infty(\sigma(S), \mu).$ If there exists $\epsilon_f > 0$ such that
\[
\ |\rho_S(f)(z)| \ge \epsilon_f,~\area_{\mathcal R_S}-a.a., 
\]	
then $f$ is invertible in $R^\infty(\sigma(S), \mu).$
\end{lemma}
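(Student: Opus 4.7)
The plan is to reduce this lemma to Theorem \ref{InvTheoremY23}. Since $\rho_S$ restricted to $R^\infty(\sigma(S),\mu)$ is the original Chaumat map onto $R^\infty(\overline{E_S^\infty}, \area_{E_S^\infty})$, and by Proposition \ref{MAProp} (2) this agrees on $E_S\approx \mathcal R_S$ with the extended map of Theorem \ref{MAEqualREE}, the hypothesis translates to a lower bound $|\rho_S(f)(z)|\ge\epsilon_f$ on $\mathcal R_S$ for the original point function. It therefore suffices to upgrade this to $|\rho_S(f)|\ge\epsilon_f$ on all of $E_S^\infty$, $\area$-a.a., whereupon Theorem \ref{InvTheoremY23} yields the conclusion.

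The first step is to establish that $\rho_S(f)$, regarded as an element of $R^\infty(\overline{E_S^\infty}, \area_{E_S^\infty})$, is $\gamma$-continuous in the sense of \eqref{GCDef} at $\area_{E_S^\infty}$-a.a. $\lambda\in E_S^\infty$. This is the natural analogue, with $E_S^\infty$ in place of $E_S$, of the property exploited in the proof of Theorem \ref{InvInMA} via \cite[Lemma 7.3]{y23}: repeating the removable-set construction of \eqref{RSDef}--\eqref{REEEqualsHIR} with $R^\infty(\sigma(S),\mu)^\perp$ in place of $R^\infty(\overline{E_S}, E_S)^\perp$ produces an envelope-adapted removable set $\mathcal R_S^\infty$, and the same Vitushkin-type localization argument delivers the required $\gamma$-continuity on a full-$\area$ subset of $E_S^\infty$.

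With $\gamma$-continuity in hand, the conclusion follows by a density-continuity clash. Fix a $\gamma$-continuity point $\lambda\in E_S^\infty$ and suppose $|\rho_S(f)(\lambda)|<\epsilon_f$. Setting $\eta=\epsilon_f-|\rho_S(f)(\lambda)|>0$, the triangle inequality and the hypothesis give
\[
\ \mathcal R_S \cap \D(\lambda,\delta) \subset \D(\lambda,\delta) \cap \{z:~ |\rho_S(f)(z) - \rho_S(f)(\lambda)| \geq \eta\}, ~ \area-a.a.,
\]
for every $\delta>0$. By Theorem \ref{IrreducibleForS} the left-hand set has positive lower $\gamma$-density at $\lambda$, while by $\gamma$-continuity the right-hand set has $\gamma$-density zero --- a contradiction. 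Hence $|\rho_S(f)|\ge\epsilon_f$ on $E_S^\infty$, $\area$-a.a., and Theorem \ref{InvTheoremY23} finishes the proof.

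The main obstacle I expect is the first step: carefully transposing the removable-set and Vitushkin-$\gamma$-continuity machinery of \cite{y23} from the $M_a(S)/\mathcal R_S$ framework to the $R^\infty(\sigma(S),\mu)/E_S^\infty$ framework, including the analogues of \eqref{EEqn} and \eqref{REEEqualsHIR} for the envelope $E_S^\infty$. Once that foundation is laid, the density-continuity clash is routine, and irreducibility enters only through Theorem \ref{IrreducibleForS}.
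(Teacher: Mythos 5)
Your proposal is correct and follows essentially the same route as the paper: the paper likewise introduces the removable set $\mathcal R_\infty$ for $R^\infty(\sigma(S),\mu)$, invokes \cite[Lemma 7.3 (1)]{y23} for $\gamma$-continuity of $\rho_S(f)$ there, combines the semiadditivity inequality \eqref{SAAC} with Theorem \ref{IrreducibleForS} to force $|\rho_S(f)(\lambda)|\ge\epsilon_f$ at $\area_{\mathcal R_\infty}$-a.a.\ $\lambda$ (your \quotes{density--continuity clash} is exactly this step, phrased as a contradiction rather than via an approximating sequence), and then concludes with \cite[Corollary 5.9]{y23} and Theorem \ref{InvTheoremY23}. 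The only cosmetic difference is that no fresh transposition of the Vitushkin machinery is needed for your first step, since \cite{y23} already treats $R^\infty(\sigma(S),\mu)$ directly.
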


\begin{proof}
Let $\mathcal R_\infty$ be the removable set for $R^\infty(\sigma(S), \mu).$ From [Lemma 7.3 (1)]\cite{y23} , we see that $\rho_S(f)(z)$ is $\gamma$-continuous at $\lambda\in \mathcal R_\infty~\gamma-a.a..$	For $\epsilon > 0,$ define
\[
\ A_\epsilon = \{|\rho_S(f)(z) - \rho_S(f)(\lambda)| > \epsilon\}.
\]
Using \eqref{SAAC}, we have
\[
\ \gamma(\mathbb D(\lambda, \delta)\cap A_\epsilon^c \cap \mathcal R_S) + \gamma(\mathbb D(\lambda, \delta)\cap A_\epsilon)\ge \dfrac{1}{C_T} \gamma(\mathbb D(\lambda, \delta) \cap \mathcal R_S).
\]
Applying Theorem \ref{IrreducibleForS} and $\gamma$-continuity of $\rho_S(f)(z)$ at $\lambda,$ we get
\[
\ \underset{\delta\rightarrow 0}{\underline{\lim}} \dfrac{\gamma(\mathbb D(\lambda, \delta)\cap A_\epsilon^c \cap \mathcal R_S)}{\delta} > 0.
\]
Together with the fact that $\lim_{\delta\rightarrow 0} \frac{\gamma(\D(\lambda, \delta)\setminus \mathcal R_S)}{\delta} = 0$ by \cite[Corollary 5.7]{y23} and \eqref{AreaGammaEq}, we find a sequence $\{\lambda_n\} \subset \mathcal R_S$ such that $|\rho_S(f)(\lambda_n)| \ge \epsilon_f$ and $\rho_S(f)(\lambda_n)\rightarrow \rho_S(f)(\lambda).$ Therefore, we infer that
\[
\ |\rho_S(f)(z)| \ge \epsilon_f,~\area_{\mathcal R_\infty}-a.a..
\]
Now the proof follows from \cite[Corollary 5.9]{y23} and Theorem \ref{InvTheoremY23}.
\end{proof}

Combining Theorem \ref{theoremA}, Theorem \ref{InvInMA}, and Lemma \ref{InvLemma}, we finish the proof of Theorem \ref{theoremC}.

\bigskip

{\bf Acknowledgments.} 
The author would like to thank Professor John M\raise.45ex\hbox{c}Carthy for carefully reading through the manuscript and providing many useful comments.

\bigskip

\bibliographystyle{amsplain}

\begin{thebibliography}{99}


\bibitem{a82} S. Axler, \textit{Multiplication operators on Bergman spaces}, J. reine anger.  Math. {\bf 336} (1982). 26--44.

\bibitem{acm82} S. Axler, J. Conway, and G. McDonald \textit{Toeplitz operators on Bergman spaces}, Canadian J. Math. {\bf 334} 2(1982). 466--483.

			
\bibitem{cha74} J. Chaumat, \textit{Adherence faible \'etoile d'alg\'ebra de fractions rationelle}, Ann. Inst. Fourier Grenoble,  {\bf 24} (1974), 93--120.

\bibitem{c73} L. A. Coburn, \textit{Singular integral operators and Toeplitz operators on odd spheres}, Indiana Univ. Math. J. {\bf 23} (1973). 433--439.


\bibitem{c81} J. B. Conway, \textit{Subnormal operators}, Pitman, London, 1981.
  
 \bibitem{conway} J. B. Conway, \textit{The theory of subnormal operators}, Mathematical Survey and Monographs 36, 1991.
 
 \bibitem{co77} J B. Conway and R. F. Olin, \textit{A functional calculus for subnormal operators, II}, Mem. Amer. Math. Soc. (1977), no,184.


\bibitem{cd78} M.J. Cowen and R.G. Douglas, \textit{Complex Geometry and Operator Theory}, Acta. Math.  {\bf 141} (1978), 187--261.


\bibitem{dud84} James Dudziak, \textit{Spectral Mapping Theorems for Subnormal Operators}, Journal of Functional Analysis  {\bf 56} (1984), 360--387.

  
\bibitem{gamelin} T. W. Gamelin, \textit{Uniform algebras}, American Mathematical Society, Rhode Island, 1969.

\bibitem{j81} J. Janas, \textit{Toeplitz operators related to certain domains in C}, Studia Math.  {\bf 154} (1981), no. 3, 73--77.


\bibitem{m77} G. MCDonald, \textit{Fredholm properties of a class of Toeplitz operators on the ball}, IndianaUniv.Math.J.  {\bf 26} (1977), 567--576.



\bibitem{thomson} J. E. Thomson, \textit{Approximation in the mean by polynomials}, Ann. of Math.  {\bf 133} (1991), no. 3, 477--507.

\bibitem{Tol03} X. Tolsa, \textit{Painleve's  problem and the semiadditivity of analytic capacity}, Acta Math.  {\bf 51} (2003), no. 1, 105--149.

\bibitem{Tol14} X. Tolsa, \textit{Analytic capacity, the Cauchy transform, and non-homogeneous  Calder\'{o}n-Zygmund
theory}, Birkhauser/Springer, Cham, 2014.

\bibitem{y19} L. Yang, \textit{Bounded point evaluations for certain polynomial and rational modules}, Journal of Mathematical Analysis and Applications  {\bf 474} (2019), 219--241.

\bibitem{y23} L.Yang, \textit{Invertibility in weak-star closed algebras of analytic functions}, J. Funct. Anal., {\bf 285}(2023), No 11, Paper No. 110143, 32 pp.
\end{thebibliography}

\end{document}